\documentclass[11pt,oneside,letterpaper]{article}
\usepackage{amsmath}
\usepackage{amsfonts}
\usepackage{amssymb}
\usepackage{amsthm}
\usepackage{graphicx}
\usepackage{multicol}
\usepackage{multirow}
\usepackage{booktabs}
\usepackage{color}
\usepackage{rotfloat}
\usepackage[T1]{fontenc}
\usepackage{amssymb,amsmath}
\usepackage{xr}
\usepackage[left=1in, bottom=1in, right=1in, top=1in]{geometry}
\voffset = 0.1in
\usepackage{times}
\usepackage[numbers]{natbib}
\usepackage{cases}
\usepackage[latin1]{inputenc}
\usepackage{float}
\usepackage{pdfsync}
\usepackage{setspace}
\usepackage{enumerate}
\usepackage{rotating}
\externaldocument{Revised_Supplement_MOR}
\numberwithin{equation}{section}
\setlength{\bibsep}{0cm}
\setlength{\parindent}{0.5cm}
\newtheorem{definition}{Definition}
\newtheorem{lemma}{Lemma}[section]
\newtheorem{proposition}{Proposition}
\newtheorem{theorem}{Theorem}
\newtheorem{assumption}{Assumption}

\theoremstyle{definition}\newtheorem{example}{Example}
\theoremstyle{definition}\newtheorem{remark}{Remark}[section]

\DeclareMathOperator*{\arginf}{arg\,inf}
\DeclareMathOperator*{\argmax}{arg\,max}

\title{The Information Projection in Moment Inequality Models: Existence, Dual Representation, and Approximation}
\author{Rami V. Tabri \\
 Department of Econometrics and Business Statistics, Monash University, \\ 29 Ancora Imparo Way, Clayton, Victoria, 3800, Australia,\\ Email: rami.tabri@monash.edu.}
\begin{document}
\maketitle
\begin{abstract}
Relative entropy minimization is a widely used method in decisions and operations research that incorporates information through constraints on the underlying probability model. The solution is called information projection, and we 
present new results for its existence, exponential family representation, and approximation in the infinite-dimensional setting for moment inequality constraint sets, nesting both conditional and unconditional moments and allowing for an infinite number 
of such inequalities. Our approach considers the Fenchel dual of the relative entropy minimization problem, with a key innovation being the exhibition of the dual variable as a weak vector-valued integral, enabling the formulation of a simple 
approximation scheme. Under suitable assumptions, the values of finite-dimensional convex stochastic programs can approximate the dual problem's optimum value and that, in addition, every accumulation point of a sequence of optimal solutions for the approximating programs is an optimal solution of the dual problem. We illustrate the verification of assumptions and construction of the approximation scheme's parameters for the cases of unconditional and conditional first-order stochastic dominance constraints and conditions that characterize selectionable distributions for a random set, and present numerical experiments demonstrating the simplicity of the approximation scheme.
 \end{abstract}
OR/MS Subject Classification: (i) Probability: Relative entropy and distribution comparison; (ii) Programming: Constrained infinite-dimensional optimization and approximation. \\
Area of Review: Stochastic Models.

\section{Introduction}
\par Given a reference probability distribution (PD) $Q$, a frequently encountered question that arises in operations research and management science is to calculate a probability density function on the basis of some known moment restrictions that is ``closest'' to $Q$. One prevalent method for selecting an estimate from the set of all densities satisfying the prescribed moment constraints is to choose it to minimize the relative entropy (\citet{Kullback-Leibler}). When it exists, the solution of this minimization problem is known as the $I$-projection. This approach has been successfully applied to decision problems, such as portfolio selection (e.g.,~\citet{LASSANCE2023302,Post2017}), utility maximization (\citet{Frittelli2000}), and to operations research, such as process monitoring (e.g.~\citet{ALWAN1998526,BAJGIRAN2021196,CHANG2022108150}), marketing analytics (e.g.,~\citet{Brockett1995,Soo2002}), and visual analytics~(e.g.,~\citet{Chen2008}). It has also been widely applied in other areas such as machine learning (e.g.,~\citet{Dhilion,NIPS2015_06138bc5,COTFNT}), optimal transport (e.g.,~\citet{LEONARD,NutzWiesel2022}), statistics (e.g.,~\citet{Kullback,Sheehy-EL,haberman1984,Kortanek1993}), probability theory (e.g., \citet{Sanov,csiszar1975,Csiszar-conditional}), and 
other scientific fields.

\par Phrased mathematically, for a given reference PD $Q$ defined on the measurable space $(\Omega,\mathcal{F}),$ the relative entropy minimization problem this paper considers is
\begin{equation}\label{eq - KL Min Problem}
\begin{aligned}
\text{minimize}& \quad m(p)=\begin{cases} \int_{\Omega}p\log(p)\,dQ &\mbox{if } p\geq0,\quad \int_{\Omega}p\,dQ=1\\
\infty &\mbox{elsewhere},\end{cases} \\
\text{subject to}&\quad \int_{\Omega}f_\gamma\, p\,dQ\leq 0\;\forall\gamma\in\Gamma,\quad p\in L_1(Q),
\end{aligned}
\end{equation}
with the understanding that the integrals must be well-defined -- see Section~\ref{Section - Setup}. The choice variable is the density $p$ and $\{f_\gamma:\gamma\in\Gamma\}$ is a given set of moment functions with index set $\Gamma$ having infinite cardinality. The inequality restrictions we consider also include conditional moment inequality restrictions, as such restrictions are equivalently characterized in terms of unconditional ones using a continuum or countably infinite number of instrument functions --- a result put forward by~\citet{Andrews-Shi}. 

\par Many applications call for imposing multiple constraints that take the form of infinitely many moment inequality restrictions with known moment functions. Salient examples in decisions include shape restrictions such as (i) unconditional and conditional stochastic dominance orderings, which are fundamental to the rankings of strategies in many areas of practical significance, such as investment (\citet{Post2017}), water-conserving irrigation in agricultural processes (\citet{Harris-Mapp}), disease control in epidemiology (\citet{VERTERAMOCHIU2020104906}), and social policy evaluation (\citet{Foster-Shorrocks}), and (ii) martingale-type restrictions in stochastic finance (\citet{Frittelli2000}). Shape restrictions also arise in Bayesian inference problems, where one imposes such constraints on the posterior distribution of latent variables~(e.g.,~\citet{Ganchev2010}). This approach is particularly useful in models where incorporating a-priori information about latent variables is complex and intractable, such as graphical models. $I$-projections also arise in a Bayes factor approach to statistical testing problems involving $e$-variables and a composite alternative hypothesis (e.g.,~\citet{Grunwald2024}, Section~3). An example of the constraints in (\ref{eq - KL Min Problem}) relevant to that testing problem is stochastic dominance, which is specified under the alternative hypothesis to represent a distributional effect, against a null hypothesis of no effect or homogeneity.

\par In operations research, a practical problem is the monitoring and control of industrial processes with censored data (e.g.,~\citet{Steiner-MacKay2001}). In this setting one observes random sets (i.e., a set-valued random variable) rather than the response values because of the censoring. A fundamental result due to~\citet{Artstein1983DistributionsOR} allows us to characterize the distribution of bundles of random vectors that constitute the random set in terms of infinitely many moment inequality restrictions. Another practical problem in the same vein as the censored data example is when data on response values are missing as opposed to being censored (e.g.,~\citet{Du-Hu-Zhang}). In this setting, the response values' distribution is partially identified with identified set characterized by infinitely many moment inequality restrictions (\citet{manski2005}). Consequently, one could use relative entropy with these restrictions to construct control charts based on information theory for monitoring distributions of a process variable and process attributes with incomplete data along the lines of \citet{ALWAN1998526}, for instance. This approach would circumvent the use of potentially misspecified parametric models for incomplete data in practice, and hence, improve accuracy and reliability of monitoring and detection processes. To the best of our knowledge, this approach has not been considered in the operations research literature. In consequence, solving the relative entropy minimization problem this paper considers could significantly impact data applications in operations research and decisions.

\par In this paper, we develop the existence, exponential family representation, and approximation of the  $I$-projection problem~(\ref{eq - KL Min Problem}). We propose a combination of conditions on the reference PD $Q$ and the moment functions that facilitates these developments, which are straightforward to verify in practice. The conditions establish the existence and exponential family representation of the $I$-projection using results on Fenchel duality put forward by~\citet{Bhatacharya-Dykstra}. Furthermore, the dual problem's solution belongs to the $L_1(Q)$-norm closure of the positive linear span of the moment functions for which the moment inequalities are binding, where $L_1(Q)$ is the Lebesgue space of absolutely integrable functions with respect to $Q$. This expression for the $I$-projection is new though not surprising given the previously obtained results for the case with a finite number of inequalities. While there is a rich literature that has investigated the existence and dual representation of $I$-projections in the case of moment restrictions (e.g.,~\citet{csiszar1975,Borwein-Lewis-1991,Borwein-Lewis-1993}), the results of~\citet{Bhatacharya-Dykstra} are at the forefront, because they apply to such problems when the standard Fenchel dual approach fails.

\par Infinite-dimensional problems such as~(\ref{eq - KL Min Problem}) are difficult (if perhaps not impossible) to solve in closed-form. Consequently, one is forced to use some kind of approximation scheme in order to obtain the $I$-projection. We introduce an approximation scheme that approximates the $I$-projection in terms of the Fenchel dual of~(\ref{eq - KL Min Problem}). The conditions we propose enables the exhibition of the dual variable as a Gelfand-Pettis integral (\citet{Gelfand} and ~\citet{Pettis}): a weak vector-valued integral with respect to a positive Radon measure defined on the $L_1(Q)$-closed convex hull of the set of moment functions. This characterization of the dual variable is key in our setup because the weak integral can be approximated by ``Riemann sums'' in the $L_1(Q)$-norm, which forms the basis of the proposed approximation scheme. The weak integral representation of the dual variable introduces a reparametrization of the $I$-projection's dual problem in terms of these Radon measures, which we equip with their weak-star topology. The use of this topology is advantageous because we show the objective function in the reparametrized dual problem is weak-star lower semi-continuous and coercive, yielding the existence of a solution, which may not be unique. However, under our conditions, we prove that
\begin{enumerate}[(i)]
\item the dual problem's optimum value can be approximated by a sequence of optimal values of finite-dimensional convex programs, and
\item every weak-star accumulation point of a sequence of optimal solutions of these finite-dimensional convex programs is an optimal solution for the dual problem.
\end{enumerate}
Furthermore, the proposed approximation scheme is straightforward to implement numerically using off-the-shelf simulation-based methods, such as the Sample Average Approximation (e.g., see~\citet{book-SAA} and the references therein), which is an appealing feature for practice.

\par The problem of finding conditions under which an approximation scheme has properties (i) and (ii) is of great interest for practice. For infinite programs, in general, the notion of Gamma-convergence of sequences of functionals has been successfully used to develop such conditions, for example, as in~\citet{Dal_Maso} and~\citet{Mena-Lerma-2005}. The approximation of the Fenchel dual problem of~(\ref{eq - KL Min Problem}) can also be studied using the general results of~\citet{Dal_Maso} and~\citet{Mena-Lerma-2005}. However, our approach is more direct and natural given the structure of the approximation scheme. Furthermore, we establish (i) under strictly weaker conditions than those considered in~\citet{Dal_Maso} and~\citet{Mena-Lerma-2005}; see Remark~\ref{Remark on Gamma Convergence}.

\par The proposed approximation scheme is new and contrasts with leading approaches for $I$-projections. Such approximation schemes focus on scenarios where the constraint set can be expressed as a finite intersection of sets that are variation-closed and convex, and are based on cyclic iterated $I$-projections onto those sets (e.g.,~\citet{csiszar1975,Dykstra-AoP,Dykstra-Wollen-87,Bhattacharya1997}). In the infinite-dimensional setting,~\citet{bhattacharya2006} has put forward such an algorithm, with its implementation relying on the use of closed-form expressions for the $I$-projections onto the individual intersecting sets at each iteration. The constraint sets we consider have this finite intersection representation, but our general setup may not yield closed-form expressions for the $I$-projections onto the individual sets. Consequently, it may be infeasible to implement Bhattacharya's algorithm without imposing further restrictions on our $I$-projection problem. Another motivation for our approximation scheme comes from this point, where the additional restrictions for executing Bhattacharya's algorithm in our framework may be severe in practice, and hence, would reduce the scope of applications. We elaborate on this point in the paper using examples of marginal stochastic order considered by~\citet{bhattacharya2006}, who obtains closed-form expressions for the $I$-projection by imposing square-integrability restrictions on the $I$-projection problem. Contrastingly, our approximation scheme does not require this additional structure to be feasible in practice; therefore, it is more widely applicable for moment inequality models.

\par This paper is organized as follows. Section~\ref{Section - OR Example} illustrates an application of the $I$-projection problem~(\ref{eq - KL Min Problem}) using an example of process monitoring. Section~\ref{Section - Setup} introduces the setup for deriving the main results. Section~\ref{Section- Main Results} presents the main results and their proofs, and Section~\ref{Section - Discussion} presents a discussion of the scope of our main results and implications for practice. All technical lemmas are relegated to the Appendix for ease of exposition. Section~\ref{Section - Examples} illustrates the theory with numerical experiments for moment functions that define \citet{Artstein1983DistributionsOR} inequalities, characterizing selectionable distributions for a random interval, which is helpful for modeling censored data. In the Supplementary Material, we present technical results for the Examples in Section~\ref{Section - Discussion}, numerical experiments for moment functions that define unconditional and conditional first-order stochastic dominance constraints, and two counterexamples where assumptions are not verified and the main results do not hold.

\section{Illustrative Example: Process Monitoring with Interval Censored Data}\label{Section - OR Example}
Process monitoring methods have been and continue to be widely implemented in manufacturing, service, and scientific activities. A notable method in this literature is the one put forward by~\citet{ALWAN1998526}, who proposed a general framework for constructing control charts based on information theory for monitoring moments and distributions of a process variable. However, it falls short in applications with 
censored data. The optimization problem we tackle helps extend their approach to such scenarios without relying on an \emph{assumed} model for the incomplete data, circumventing the errors due to their incorrect misspecification. This section describes their information-theoretic process control (ITPC) method, and how to extend it to the setup with interval-censored data. 

\par In practice, the true distribution of a process variable is typically unknown. However, there is usually some partial information about it in the form of moments. For example, we may have the in-control mean and variance for a measurement variable by engineering design. In mathematical terms, let $P_0$ denote the true distribution of the process variable for the in-control state, defined on the measurable space $(\Omega,\mathcal{F})$, then the practitioner observes the moments $a_{k}=\int_{\Omega} g_{k}(\omega)\,dP_0(\omega),\quad k=0,1,\ldots,K,$ where $g_0(\omega)=1$ for all $\omega\in\Omega$ is used for normalizing the density, and $g_k$'s are absolutely integrable functions with respect to $P_0$. The process is then monitored using these initial moments and the data moments $d_{1},\ldots,d_K$ computed from sampling the process variable at the monitoring state, \emph{assuming complete data}. As the initial moments $\{a_{k}\}_{k=1}^{K}$ and the data moments $\{d_{k}\}_{k=1}^{K}$ are all the practitioner knows, the ITPC, a three-step procedure, constructs an information chart based on them, to detect whether a change has occurred in the process distribution between the monitoring and in-control states. 

\par The ITPC algorithm follows these steps:
\begin{enumerate}
\item Use the Maximum Entropy principle to estimate the unknown distribution $P_0$ of the process variable for the in-control state, $P^*$. In mathematical terms, obtain 
\begin{align*}
p^*=\argmax\left\{-\int_{\Omega}\log p\,dP:p\geq 0, \int_{\Omega} g_{k}\,dP=a_k,\, k=0,1,\ldots,K\right\},
\end{align*}
where $p^*$ is the density of $P^*$ with respect to Lebesgue measure on $\Omega$.
\item Use the Minimum Discrimination Information (MDI) principle to estimate the process variable's distribution at the monitoring state. Mathematically, this step combines $P^*$ and the data moments $\{d_{k}\}_{k=1}^{K}$ as inputs in the following relative entropy minimization problem,
\begin{equation}\label{eq - KL Min Problem Illust Example}
\begin{aligned}
\text{minimize}& \quad K(p,p^*)=\begin{cases} \int_{\Omega}p\log(p)\,dP^* &\mbox{if } p\geq0,\quad \int_{\Omega}p\,dP^*=1\\
\infty &\mbox{elsewhere},\end{cases} \\
\text{subject to}&\quad \int_{\Omega} pg_{k}\,dP^*=d_k,\, k=1,\ldots,K,\quad p\in L_1(P^*).
\end{aligned}
\end{equation}
whose solution is $p^{**}$ -- the $P^*$-density of $P^{**}$. This step can be applied with samples of the process variable at different monitoring states, $t=1,2,\ldots$, to produce a sequence $p_t^{**}$ for $t=1,2,\ldots$.  
\item Use the control function $I_t=2n_t\,K(p_t^{**},p^*)$ from the previous step to detect if a change has occurred in the distribution of the process variable between monitoring states and the in-control state, where $n_t$ is the sample size in state $t$. In particular, for each $t$. when $I_t$ is not ``sufficiently'' small, then the process is judged as not in-control. One can report the plot of $t\mapsto I_t$ with an upper control limit set as the $(1-\alpha)$ quantile of a distribution that indicates uncertainty about $I_t$. The uncertainty associated with $I_t$ may be accounted for either in the Bayesian paradigm or in the frequentist paradigm. 
\end{enumerate} 
As the steps above show, this algorithm hinges upon the complete data assumption at the monitoring phase. In many applications, sampled data can be incomplete due to censoring, so one must decide how to treat this problem in practice. Next, we propose a modification of the ITPC algorithm that extends its scope of applicability to such scenarios. 

\par Suppose that the data collected at the monitoring stage in state $t$ are realizations of a random sample from a random interval $Y=\left[\underline{Y},\overline{Y}\right]\subset \Omega\subset\mathbb{R}$, where the random vector $(\underline{Y},\overline{Y})$ has an unknown distribution, $\mathbb{P}$. Consequently, the MDI principle in Step 2 of the ITPC algorithm cannot be implemented, as the data are incomplete. The set of all distributions on $(\Omega,\mathcal{F})$ induced by measurable selections, $Z$, of $Y$, i.e., $Z(\omega)\in Y(\omega)$ almost surely $\mathbb{P}$ helps to model the censored data without introducing any structure on the nature of the censoring process If $P^*$ is not a distribution of a measurable selection of $Y$, then it cannot be compatible with the random interval, as there would be positive probability on the event $\left\{\omega\in\Omega:\,Z(\omega)\not\in Y(\omega)\right\}$, and hence, the process would be judged as \emph{not} in-control.

\par A fundamental result due to~\citet{Artstein1983DistributionsOR} characterizes the set of selectionable distributions in a general setting. A key ingredient in the characterization is the capacity functional of $Y$, which fully describes its stochastic behavior, given by the map 
\begin{align*}
\sigma\left([a,b]\right) & =\text{Prob}\left[\underline{Y}<a,\overline{Y}>a\right]+\text{Prob}\left[\underline{Y}\in[a,b]\right],
\end{align*}
whose domain is $\mathcal{C}(\Omega)=\left\{[a,b]\subset\Omega: a,b\in\Omega\right\}$. Artstein's result applied to this example shows that this set consists of all the distributions on $(\Omega,\mathcal{F})$ having $P^*$-density, $p$, satisfying the inequalities
\begin{align}\label{eq - Inequality restrictions}
\int_{\Omega}1\left[\omega\in C\right]p\,dP^*\leq \sigma\left(C\right)\quad\forall C\in \mathcal{C}(\Omega).
\end{align}
If the practitioner does not know $\sigma\left(\cdot\right)$, then it can be estimated using the interval data $Y_1,\ldots,Y_n$. A natural estimator of it is its empirical variant $C\mapsto\hat{\sigma}(C)=\frac{1}{n_t}\sum_{i=1}^{n_t}1\left[C\cap Y_i\neq\emptyset\right]$.
Hence, in Step 2, we can replace the optimization problem~(\ref{eq - KL Min Problem Illust Example}) with 
\begin{equation}\label{eq - KL Min Problem-1}
\begin{aligned}
\text{minimize}& \quad K(p,p^*)=\begin{cases} \int_{\Omega}p\log(p)\,dP^* &\mbox{if } p\geq0,\quad \int_{\Omega}p\,dP^*=1\\
\infty &\mbox{elsewhere},\end{cases} \\
\text{subject to}&\quad\int_{\Omega}\left(1\left[\omega\in C\right]-\hat{\sigma}\left(C\right)\right)\, p\,dP^*\leq 0\;\forall C\in \mathcal{C}(\Omega),\quad p\in L_1(P^*),
\end{aligned}
\end{equation}
Observe that the optimization problem~(\ref{eq - KL Min Problem-1}) is a special case of the one we consider in the manuscript, with $Q=P^*$, $m(p)=K(p,p^{*})$, $\Gamma=\mathcal{C}(\Omega)$ and $f_{\gamma}(\omega)=1\left[\omega\in C\right]-\hat{\sigma}\left(C\right)$ for $\gamma=C\in \mathcal{C}(\Omega)$.

\par The ITPC framework can be modified by replacing the relative entropy problem~(\ref{eq - KL Min Problem Illust Example}) in its second step with the one in~(\ref{eq - KL Min Problem-1}), and then build the information chart as in the third step but now using the optimal value $K(p,p^*)$ in~(\ref{eq - KL Min Problem-1}). This third step entails calculating quantiles of a distribution that indicates uncertainty about $I_t$, where there are now infinitely many moment inequality restrictions. If the practical objective is to detect when the process is \emph{not} in-control in state $t$, then a frequentist approach to determine the appropriate $(1-\alpha)$ quantile could be based on the null distribution of $I_t=2n_t\,K(p_t^{**},p^*)$ for the hypothesis testing problem:
\begin{align*}
H_0: \int_{\Omega}\left(1\left[\omega\in C\right]-\sigma\left(C\right)\right)\,dP^*\leq 0\;\forall C\in \mathcal{C}(\Omega)\quad\text{Vs.}\quad H_1:\,\text{not}\,H_0,
\end{align*}
where $\alpha$ is the significance level. Here, the source of uncertainty in the MDI step is due to the sampling variability of the random intervals $\{Y_1,\ldots,Y_{n_t}\}$.  One can also consider a Bayesian approach where the uncertainty associated with $I_t$ will be accounted for via the prior and posterior distributions of the moment vectors $a_1,\ldots,a_K$. In general, this approach ought to be determined by Monte Carlo methods. Developing these statistical procedures and their properties goes beyond the intended scope of the paper and is left for future research.

\section{Setup and Preliminaries}\label{Section - Setup}

\par We first introduce some notation and useful facts from functional analysis (which can be found in textbooks, such as~\citet{Rudin-Book},~\citet{Folland}, and~\citet{Bogachev}). Throughout this paper, the reference PD is $Q$, which is defined on the measurable space $(\Omega,\mathcal{F})$. Let $\mathcal{B}(\mathbb{R})$ denote the Borel $\sigma$-algebra on $\mathbb{R}$, and recall that $\mathcal{F}$ is a $\sigma$-algebra on $\Omega$. For any real-valued function that is $\mathcal{F}/\mathcal{B}(\mathbb{R})$
measurable, we define its integral with respect to $Q$ as $\int_{\Omega}z\,dQ=\int_{\Omega}z^+\,dQ-\int_{\Omega}z^-\,dQ$, provided that at least one of the integrals on the right side of this equality is finite, where
$$z^+(\omega)=\begin{cases} z(\omega) & \mbox{if }0\leq z(\omega)\leq\infty\\
0 &\mbox{otherwise}\end{cases}\quad \text{and}\quad z^-(\omega)=\begin{cases} -z(\omega) & \mbox{if }-\infty\leq z(\omega)\leq0\\
0 &\mbox{otherwise}\end{cases}$$ for each $\omega\in\Omega$. This is the setup we are mainly concerned with regarding the definition of all Lebesgue integrals in this paper. Now the set of real-valued functions $\{f_\gamma:\gamma\in\Gamma\}$ that define the inequality restrictions in~(\ref{eq - KL Min Problem}) are moment functions, which means $f_\gamma$ is measurable $\mathcal{F}/\mathcal{B}(\mathbb{R})$ for each $\gamma\in\Gamma$. Furthermore, the integrals in~(\ref{eq - KL Min Problem}) are well-defined in the sense described above.

\par We define for $r\geq 1$ the Lebesgue space
\begin{align*}
L_r(Q)=\left\{h:\Omega\rightarrow\mathbb{R}:\,\text{$h$ is measurable $\mathcal{F}/\mathcal{B}(\mathbb{R})$ and}\;\left(\int_{\Omega}|h|^r\,dQ\right)^{\frac{1}{r}}<\infty\right\}.
\end{align*}
These Lebesgue spaces have respective norms $\|h\|_{L_r(Q)}=\left(\int_{\Omega}|h|^r\,dQ\right)^{\frac{1}{r}}$, and are Banach spaces when they are equipped with their norm topology. The results of this paper focus on the space $L_1(Q)$. Its topological dual, $L_1(Q)^*$, is the vector space whose elements are the continuous linear functionals on $L_1(Q)$, plays a central role in the definition of the Gelfand-Pettis vector-valued integral. Because $(\Omega,\mathcal{F},Q)$ is a probability space, $L_1(Q)^*$ is isometrically isomorphic to the Lebesgue space $L_\infty(Q)$, where $$\|h\|_{L_\infty(Q)}=\inf\left\{a\geq0:Q\left(\{\omega\in\Omega:|h(\omega)|>a\}\right)=0\right\}.$$ Whence, $\Lambda\in L_1(Q)^*$ can be identified with $h\in L_\infty(Q)$ through the map $\Lambda_h:X\rightarrow \int_{\Omega}x\,h\,dQ$. The Lebesgue space $L_\infty(Q)$ is also a Banach space when it is equipped with the norm $\|\cdot\|_{L_\infty(Q)}$.

\par The Gelfand-Pettis vector-valued integral in this paper's setup is defined for \emph{any} $X\subset L_1(Q)$ that is compact in the norm topology. Suppose that $\xi$ is a positive Radon measure on the measure space $(X,\mathcal{B}(X))$, where $\mathcal{B}(X)$ is the Borel $\sigma$-algebra of $X$.
\begin{definition}[Gelfand-Pettis Integral]\label{Def - GF integral}
 Suppose that the scalar functions defined on $X$ given by $x\mapsto\Lambda(x)$ are integrable with respect to $\xi$, for every $\Lambda\in L_1(Q)^*$. If there exists a vector $y\in X$ such that
 \begin{align*}
\Lambda(y)=\int_{X}\Lambda(x)\,d\xi(x)\quad\forall \Lambda\in L_1(Q)^*,
\end{align*}
then we define $\int_{X}x\,d\xi(x)=y$.
\end{definition}
As $L_1(Q)^*$ separates points on $L_1(Q)$, there is at most one such $y$ that satisfies Definition~\ref{Def - GF integral}. Thus, there is no uniqueness problem. The existence of $y$ follows from an application of Theorem~3.27 of~\citet{Rudin-Book}, because (a) $X$ is compact in the $L_1(Q)$-norm, and (b) $L_1(Q)$ with its norm topology is a Fr\'{e}chet space.\footnote{See Part (f) in Section~1.8 of \citet{Rudin-Book} for the definition of Fr\'{e}chet space.} Denote by $C(X)$ the Banach space of all real-valued continuous functions on $X$. The Riesz Representation Theorem identifies the topological dual space $C(X)^*$ with the space of all real Radon measures on $X$. The subset of $C(X)^*$ corresponding to positive measures serves as the space that we use to model the Radon measure $\xi$ in the definition of the Gelfand-Pettis integral.

\par Throughout the paper, we use the following notations. For a sequence $\{y_n\}_{n\geq1}\subset L_1(Q)$ the notation, $y_n\stackrel{L_1(Q)}{\longrightarrow}y$, $y_n\stackrel{Q}{\longrightarrow}y$ and $y_n\stackrel{w}{\longrightarrow}y$, denote convergence of the sequence $\{y_n\}_{n\geq1}$ to $y$ in the norm $\|\cdot\|_{L_1(Q)}$, the topology of convergence in $Q$ measure, and in the weak topology of $L_1(Q)$, respectively. Additionally, for $\{\xi_n\}_{n\geq1}\subset C(X)^*$, the notation
$\xi_n\stackrel{w*}{\longrightarrow}\xi$ means the sequence $\{\xi_n\}_{n\geq1}$ converges to $\xi$ in the weak-star topology of $C(X)^*$. For any subset $X$ of a vector space, we denote the set of extreme points of $X$ by $\text{ex}\left(X\right)$. For any two PDs, $P_1$ and $P_2$, we write $P_1\ll P_2$ to denote that $P_1$ is absolutely continuous with respect to $P_2$. For any $y\in L_1(Q)$, $\delta_{y}$ denotes the Dirac delta function at $y$.

\section{Main Results}\label{Section- Main Results}
This section presents the main results of the paper. We shall describe $I$-projections onto a set of PDs defined by moment inequality restrictions, given by
\begin{align}\label{eq - constraint set M}
\mathcal{M}=\left\{p\in L_{1}(Q):m(p)<\infty,\;\int_{\Omega}f_\gamma p\,dQ\leq 0\;\forall\gamma\in\Gamma \right\},
\end{align}
where $m(\cdot)$ is the objective function in~(\ref{eq - KL Min Problem}). Note that the condition $m(p)<\infty$ imposes the restrictions for $p\in\mathcal{M}$ to be a $Q$-density function. We use the general Fenchel duality approach of \citet{Bhatacharya-Dykstra} to develop a dual optimization problem that is equivalent to the stated $I$-projection problem, 
\begin{align}\label{eq - primal Problem}
\inf\left\{m(p),\,p\in\mathcal{M}\right\}.
\end{align}
A significant advancement in their approach is the use of $L_0(Q)$ -- the vector space of measurable extended real-valued functions on the probability space $(\Omega,\mathcal{F},Q)$ -- as a general dual space. This contrasts with the standard dual space, $L_\infty(Q)$, which may prove restrictive because $\{f_\gamma:\gamma\in\Gamma\}\not\subset L_\infty(Q)$ can hold in practice, resulting in a failure of the standard duality approach.\footnote{See Example 4.1(a) of~\citet{Bhatacharya-Dykstra} for a simple but insightful illustration of their point with a single inequality restriction.} Of course, the standard duality approach would be successful if $\{f_\gamma:\gamma\in\Gamma\}\subset L_\infty(Q)$, holds; however, this structure on the moment functions can be restrictive in practice.

\par The main tool we use is Theorem 2.2 in~\citet{Bhatacharya-Dykstra}, which is based on the positive conjugate cone, $\mathcal{M}^{\oplus}=\left\{y\in L_0(Q): \int_{\Omega}y\,p\,dQ\geq0\;\forall p\in\mathcal{M}\right\}$. Let $\mathcal{D}\subset\mathcal{M}^{\oplus}$, then their result asserts that the pair, $p_Q\in\mathcal{M}$ and $y_0\in\mathcal{D}$, are respective solutions of~(\ref{eq - primal Problem}) and
\begin{align}\label{eq - Fenchel Dual Problem}
\inf\left\{\int_{\Omega}e^{y}\,dQ;\; y\in\mathcal{D}\right\},
\end{align}
if $\log\left(\int_{\Omega}e^{y_0}\,dQ\right)+m(p_Q)\leq 0$. Therefore, to use their result, we must specify $\mathcal{D}$ and $p_Q$ to ensure existence of a solution in the problem~(\ref{eq - Fenchel Dual Problem}) and satisfy the specified inequality constraint. Accordingly, this paper puts forward a specification of $\mathcal{D}$ and conditions on $\{f_\gamma: \gamma\in\Gamma\}$ and $Q$ that guarantees existence of a unique solution, $y_0$, for the dual problem~(\ref{eq - Fenchel Dual Problem}), where $p_Q=e^{y_0}/\int_{\Omega}e^{y_0}\,dQ$ is the $I$-projection.

\par In light of the form of $\mathcal{M}^{\oplus}$, we consider the dual problem~(\ref{eq - Fenchel Dual Problem}) on the following domain
\begin{align}\label{eq - domain}
\mathcal{D}=\left\{y\in\mathcal{M}^{\oplus}: y\in\overline{\text{co}}(\overline{\mathcal{V}})\cdot\alpha,\;\alpha\geq0\right\},
 \end{align}
where $\overline{\mathcal{V}}$ is the $L_1(Q)$-norm closure of $\mathcal{V}$, $\overline{\text{co}}(\overline{\mathcal{V}})$ is the closed convex hull of
$\overline{\mathcal{V}}$ in the $L_1(Q)$-norm, and
\begin{align}\label{eq - moment fxns set}
\mathcal{V}= \{-f_\gamma:\gamma\in\Gamma\}.
\end{align}
The following assumptions on $Q$ and the set of moment functions $\mathcal{V}$ are helpful for developing properties of the problem~(\ref{eq - Fenchel Dual Problem}) on the domain~(\ref{eq - domain}):
\begin{assumption}\label{Assumption - Fenchel}
(i) there exists $\gamma\in\Gamma$ and $\alpha>0$ such that $\int_{\Omega}e^{-\alpha f_\gamma}\,dQ<\infty,$ (ii) $\sup_{\gamma\in\Gamma}\int_{\Omega}|f_\gamma|\,dQ<\infty$, and (iii)
 $Q\left(\omega\in\Omega:y(\omega)>0\quad\forall y\in\mathcal{V}\right)>0$.
\end{assumption}
and
\begin{assumption}\label{Assump - L_1 and total boundedness}
$\mathcal{V}$ in~(\ref{eq - moment fxns set}) is a precompact subset of $L_1(Q)$ in the norm topology.
\end{assumption}

\par Before presenting our formal results, we briefly discuss the implications of these assumptions for solving the paired problems~(\ref{eq - primal Problem}) and~(\ref{eq - Fenchel Dual Problem}), using Theorem~2.2 of \citet{Bhatacharya-Dykstra}. On solving the dual problem~(\ref{eq - Fenchel Dual Problem}), Part (i) of Assumption~\ref{Assumption - Fenchel} ensures that $\left\{y\in\mathcal{D}:\int_{\Omega}e^{y}\,dQ<+\infty\right\}\neq\emptyset$. Part (ii) of Assumption~\ref{Assumption - Fenchel} implies $\{f_{\gamma}:\gamma\in\Gamma\}$ is a uniformly bounded subset of $L_1(Q)$, and Part (iii) implies the existence of $p\in \mathcal{M}$ such that the moment inequality constraints hold with strict inequality. Together, Parts (ii) and (iii) of Assumption~\ref{Assumption - Fenchel} are sufficient conditions for the lower semi-continuity and coercivity of the objective function in~(\ref{eq - Fenchel Dual Problem}) with respect to the $L_1(Q)$-norm on the domain $\mathcal{D},$ which are the ingredients for establishing existence: $\arginf\left\{\int_{\Omega}e^{y}\,dQ: y\in \mathcal{D}\right\}\neq\emptyset$. As $\mathcal{D}$ is a convex set and the objective function is strictly convex on $\mathcal{D}$ (because of the strict convexity of the exponential function), the dual problem~(\ref{eq - Fenchel Dual Problem}) has a unique minimizer, $y_0$. Then, using the necessary first-order conditions of the dual problem,  we show $p_Q=e^{y_0}/\int_{\Omega}e^{y_0}\,dQ\in\mathcal{M}$, and hence, it solves the $I$-projection problem~(\ref{eq - primal Problem}) by Theorem~2.2 of~\citet{Bhatacharya-Dykstra}, because $\log\left(\int_{\Omega}e^{y_0}\,dQ\right)+m(p_Q)=0$, holds. Combining this result with Assumption~\ref{Assump - L_1 and total boundedness} establishes that this unique solution depends on the set of binding moments
\begin{align}\label{eq - Binding moment fxns set}
B=\left\{v\in\mathcal{V}:\int_{\Omega}v\,p_Q dQ=0\right\}.
\end{align}
\noindent Assumption~\ref{Assump - L_1 and total boundedness} is a mild restriction that is widely used in statistics and probability theory. For example, Glivenko-Cantelli and Donsker classes of moment functions satisfy precompactness conditions in terms of their metric entropy covering and/or bracketing numbers (e.g., see Theorems 2.4.1 and 2.5.2 in~\citet{VDV-W}), which end up either being equivalent to or imply precompactness of the class in the $L_1(Q)$-norm.

\par We now present the first result of this paper.
\begin{theorem}[Existence and Exponential Family Representation]\label{Thm - Fenchel}
Let the constraint set $\mathcal{M}$ be given by~(\ref{eq - constraint set M}), and let the sets $\mathcal{V}$ and $B$ be given by~(\ref{eq - moment fxns set}) and~(\ref{eq - Binding moment fxns set}), respectively. The following statements hold.
\begin{enumerate}
\item If Assumption~\ref{Assumption - Fenchel} holds, then $\arginf\left\{\int_{\Omega}e^{y}\,dQ: y\in \mathcal{D}\right\}\neq\emptyset$ and
\begin{align}\label{eq - Representation}
p_Q=\frac{e^{y_0}}{\int_{\Omega}e^{y_0}\,dQ},\quad \text{where}\quad y_0\equiv\arginf\left\{\int_{\Omega}e^{y}\,dQ: y\in \mathcal{D}\right\},
\end{align}
solves the $I$-projection problem~(\ref{eq - primal Problem}).
\item Suppose that $\exists \gamma\in\Gamma$ such that $\int_{\Omega}f_\gamma\,dQ>0$. If Assumptions~\ref{Assumption - Fenchel} and~\ref{Assump - L_1 and total boundedness} hold, then $y_0\in\overline{\text{span}_{+}(B)}$, where $\overline{\text{span}_{+}(B)}$ is the $L_1(Q)$-norm closure of the positive linear span of $B$.
\end{enumerate}
\end{theorem}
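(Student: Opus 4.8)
For Part~1 the plan is to cast the problem as the abstract convex program of~\cite{Bhatacharya-Dykstra} --- primal objective $m$ over $\mathcal{M}$, and dual objective $y\mapsto\int_\Omega e^{y}\,dQ$ over the domain $\mathcal{D}$ --- and to verify the hypotheses of their Fenchel duality theorem one at a time. A strictly feasible primal point is supplied by Assumption~\ref{Assumption - Fenchel}(iii): with $E=\{\omega\in\Omega:y(\omega)>0\ \forall y\in\mathcal{V}\}$, which has $Q(E)>0$, the density $\hat p=\mathbf{1}_E/Q(E)$ has $m(\hat p)=\log(1/Q(E))<+\infty$, and since $-f_\gamma>0$ and $K-|f_\gamma|^{1+\delta}>0$ on $E$ it satisfies $\int_\Omega f_\gamma\hat p\,dQ<0$ and $\int_\Omega|f_\gamma|^{1+\delta}\hat p\,dQ<K$ for every $\gamma\in\Gamma$; in particular $\hat P\in S(Q,+\infty)\cap\mathcal{M}$, so Theorem~\ref{Thm Existence} yields existence and uniqueness of $P_Q$. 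A dual point with finite value is supplied by Assumption~\ref{Assumption - Fenchel}(i): for the relevant $\gamma,\alpha$ the function $-\alpha f_\gamma$ lies in $\mathcal{D}$ --- it is $\alpha$ times $-f_\gamma\in\mathcal{V}\subset\overline{\text{co}}(\overline{\mathcal{V}})$, and $\int_\Omega(-\alpha f_\gamma)p\,dQ\ge0$ for $p\in\mathcal{M}$ places it in $\mathcal{M}^{\oplus}$ --- and $\int_\Omega e^{-\alpha f_\gamma}\,dQ<+\infty$. Finally, the lower semicontinuity and coercivity of the dual objective on $\mathcal{D}$ in the $L_1(Q)$-norm follow from Assumption~\ref{Assumption - Fenchel}(ii)--(iii): lower semicontinuity from convexity together with Fatou's lemma along $Q$-a.e.\ convergent subsequences, and coercivity from the uniform-integrability bound in (ii) and the strict-feasibility set in (iii). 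Their theorem then gives strong duality, attainment of the dual infimum at some $y_0\in\mathcal{D}$, and the exponential-family representation~(\ref{eq - Representation}); uniqueness of $y_0$ up to $Q$-equivalence is immediate from the strict convexity of $t\mapsto e^{t}$ and the convexity of $\mathcal{D}$.

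For Part~2, assume $Q\notin\mathcal{M}$ and write the dual optimizer as $y_0=\alpha_0 v_0$ with $v_0\in\overline{\text{co}}(\overline{\mathcal{V}})$ and $\alpha_0\ge0$; if $\alpha_0=0$ then $y_0=0$, so $P_Q=Q\in\mathcal{M}$, a contradiction, and hence $\alpha_0>0$. Since $0\in\mathcal{D}$ and $y_0$ minimizes the convex dual objective over the convex set $\mathcal{D}$, the first-order condition along the ray $\{(1-t)y_0:t\in[0,1]\}$ gives $\int_\Omega y_0\,dP_Q\le0$, while $y_0\in\mathcal{M}^{\oplus}$ and $p_Q\in\mathcal{M}$ give $\int_\Omega y_0\,dP_Q\ge0$; hence $\int_\Omega y_0\,dP_Q=0$ and therefore $\int_\Omega v_0\,dP_Q=0$. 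Also, because $p_Q\in\mathcal{M}$, every $v\in\mathcal{V}$ has $\int_\Omega v\,dP_Q\ge0$, and this extends to all $v\in\overline{\mathcal{V}}$ by uniform integrability of $\{f_\gamma\}$ under $P_Q$ for the $-f_\gamma$ part and by a Fatou-lemma argument for the $K-|f_\gamma|^{1+\delta}$ part. Now Assumption~\ref{Assump - L_1 and total boundedness} makes $\overline{\mathcal{V}}$ norm-compact, so $\overline{\text{co}}(\overline{\mathcal{V}})$ is a metrizable compact convex subset of $L_1(Q)$; by Choquet's theorem together with Milman's theorem there is a Radon probability measure $\mu$ on $\overline{\mathcal{V}}$ whose Gelfand--Pettis barycenter is $v_0$. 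Testing the barycenter identity against bounded truncations of $p_Q$ --- which lie in $L_\infty(Q)=L_1(Q)^{*}$ --- and passing to the limit yields $\int_{\overline{\mathcal{V}}}\bigl(\int_\Omega v\,dP_Q\bigr)\,d\mu(v)=\int_\Omega v_0\,dP_Q=0$; since the inner integral is nonnegative, $\mu$ is carried by the binding set $\{v\in\overline{\mathcal{V}}:\int_\Omega v\,dP_Q=0\}$. It then remains to transfer this representation onto $B$ itself --- approximating $\mu$ by finitely supported probability measures on the binding set and moving their atoms onto $B\subset\mathcal{V}$, using the precompactness of $\mathcal{V}$ and the uniform integrability of $\{f_\gamma\}$ under $P_Q$ to bound the $L_1(Q)$ error --- so as to conclude that $v_0$, and hence $y_0=\alpha_0 v_0$, lies in $\overline{\text{span}_{+}(B)}$.

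I expect the work to sit in two places. For Part~1 it is the attainment of the dual infimum in the \emph{non-reflexive} space $L_1(Q)$, where norm-bounded sets need not be relatively compact; the tail bound in Assumption~\ref{Assumption - Fenchel}(ii) and the strict-feasibility condition in Assumption~\ref{Assumption - Fenchel}(iii) are exactly what make the direct-method argument go through, and this is what~\cite{Bhatacharya-Dykstra}'s machinery handles once the hypotheses are checked. For Part~2 --- the genuinely delicate step --- the binding set naturally lives in the closure $\overline{\mathcal{V}}$, and because only one-sided (Fatou-type) estimates, rather than continuity of $v\mapsto\int_\Omega v\,dP_Q$, are available for the functions $K-|f_\gamma|^{1+\delta}$, this binding set need not be the $L_1(Q)$-closure of $B$; transferring the barycentric representation of $v_0$ onto the closed positive span of $B$ is where the precompactness in Assumption~\ref{Assump - L_1 and total boundedness} does its essential work, and it is the main obstacle.
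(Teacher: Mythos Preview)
Your Part~1 strategy coincides with the paper's: both establish existence of the dual minimizer by verifying lower semicontinuity (Fatou along $Q$-a.e.\ convergent subsequences) and coercivity (tail-sum representation combined with the strict-feasibility event of Assumption~\ref{Assumption - Fenchel}(iii), packaged as Lemma~\ref{Lemma - Coercive}) of $y\mapsto\int_\Omega e^y\,dQ$ on $\mathcal{D}$ in the $L_1(Q)$-norm, obtain the first-order conditions $\int_\Omega y_0 e^{y_0}\,dQ=0$ and $\int_\Omega y e^{y_0}\,dQ\ge0$ for all $y\in\mathcal{D}$, and then invoke Theorem~2.2 of \cite{Bhatacharya-Dykstra} for the exponential representation; uniqueness is by strict convexity in both.

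For Part~2 you and the paper diverge in machinery but not in substance. The paper works via Krein--Milman and Milman's theorem (its Lemma~\ref{Lemma - KMT and MT}): writing $y_0=\alpha_0 v_0$ with $v_0\in\overline{\text{co}}(\overline{\mathcal{V}})$, it expresses $v_0$ either as a finite convex combination of extreme points $v_i\in\overline{\mathcal{V}}$ or as an $L_1(Q)$-limit of such combinations, and then $\int_\Omega y_0\,dP_Q=0$ together with $\int_\Omega v_i\,dP_Q\ge0$ forces each $v_i$ to be binding; uniform integrability under $P_Q$ handles the limit case. Your Choquet/barycentric route reaches the same conclusion---the representing measure is carried by the binding part of $\overline{\mathcal{V}}$---and in fact anticipates the Gelfand--Pettis framework the paper deploys only later for Theorem~\ref{Thm - existence reparametrization}; the paper's finite-combinations-plus-limits argument is more elementary but otherwise equivalent. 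On the step you single out as the main obstacle---moving from binding elements of $\overline{\mathcal{V}}$ to $\overline{\text{span}_+(B)}$ with $B\subset\mathcal{V}$---the paper is brief: it simply asserts $y_0\in\text{span}_+(\overline{B})\subset\overline{\text{span}_+(B)}$, in effect identifying the binding elements of $\overline{\mathcal{V}}$ with $\overline{B}$ without spelling out the approximation, so your caution here is warranted and the paper offers no additional argument beyond what you have sketched.
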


\begin{proof}
\par {\bf Part 1}. The proof proceeds by the direct method. To show under our assumptions that \\ $\arginf\left\{\int_{\Omega}e^{y}\,dQ: y\in \mathcal{D}\right\}\neq\emptyset$ (i.e., existence of a minimizer), holds, it is sufficient to establish that the map $y\mapsto \int_{\Omega}e^{y}\,dQ$ is lower semi-continuous and coercive on $\mathcal{D}$, with respect to the $L_1(Q)$-norm. The uniqueness of the minimizer (up to equivalence class, i.e., a.s.$-Q$) is implied by the strict convexity of this map on $\mathcal{D}$, which follows from the strict convexity of the exponential function and Part (i) of Assumption~\ref{Assumption - Fenchel}.

\par Toward that end, we first establish the lower semi-continuity of the map $y\mapsto \int_{\Omega}e^{y}\,dQ$. Suppose that $\left\{y_n\right\}_{n\geq1}\subset \mathcal{D}$ such that $y_n\stackrel{L_1(Q)}{\longrightarrow}y$, then $\exists \{n_k\}_{k\geq 1}$ such that $\displaystyle\lim_{k\rightarrow\infty} \int_{\Omega}e^{y_{n_k}}\,dQ=\liminf_{n\rightarrow\infty}\int_{\Omega}e^{y_n}\,dQ$. Now we can take a further subsequence $\{n_{k_\ell}\}_{\ell\geq 1}$ such that $\displaystyle\lim_{\ell\rightarrow\infty}y_{n_{k_\ell}}(\omega)=y(\omega)$ a.s.$-Q$. This yields
\begin{align*}
\int_{\Omega}e^{y}\,dQ=\int_{\Omega}e^{\lim_{\ell\rightarrow\infty}y_{n_{k_\ell}}}\,dQ & \leq \liminf_{\ell\rightarrow\infty}\int_{\Omega}e^{y_{n_{k_\ell}}}\,dQ\;(\text{by Fatou's Lemma})\\
& = \lim_{k\rightarrow\infty} \int_{\Omega}e^{y_{n_k}}\,dQ\; (\text{as $\{n_{k_\ell}\}_{\ell\geq 1}\subset \{n_k\}_{k\geq 1}$}) \\
& = \liminf_{n\rightarrow\infty}\int_{\Omega}e^{y_n}\,dQ,
\end{align*}
establishing the desired result.

\par We now establish the map $y\mapsto \int_{\Omega}e^{y}\,dQ$ is norm-coercive on $\mathcal{D}$; that is, it satisfies
\begin{align}\label{eq - proof thm 2 00}
\int_{\Omega}e^{y}\,dQ\rightarrow\infty\quad \text{as}\quad \|y\|_{L_1(Q)}\rightarrow \infty,
\end{align}
on the set $\mathcal{D}$. Observe that $y\in\mathcal{D}$ implies $y=\alpha\cdot y^\prime$ for some $y^\prime\in\overline{\text{co}}(\overline{\mathcal{V}})$ and $\alpha\geq0$, and Part (ii) of Assumption~\ref{Assumption - Fenchel} implies
$\|y^\prime\|_{L_1(Q)}\leq \sup_{\gamma\in\Gamma}\int_{\Omega}|f_\gamma|\,dQ<\infty$ $\forall y^\prime\in\overline{\text{co}}(\overline{\mathcal{V}})$, which is a uniform bound. Consequently, $\|y\|_{L_1(Q)}\rightarrow \infty$ only when $\alpha\rightarrow \infty$, for $y\in\mathcal{D}$. To connect this structure to the aforementioned map,
\begin{align}
\int_{\Omega}e^{y}\,dQ & = \int_{\Omega}e^{y}\,1\left[e^{y}\leq\|y\|_{L_1(Q)} \right]\,dQ+\int_{\Omega}e^{y}\,1\left[e^{y}>\|y\|_{L_1(Q)} \right]\,dQ\nonumber\\
& \geq \|y\|_{L_1(Q)}\,Q\left(e^{y}>\|y\|_{L_1(Q)} \right)\nonumber\\
& = \|y\|_{L_1(Q)}\,Q\left(y^\prime>\frac{\log\alpha+\log \|y^\prime\|_{L_1(Q)}}{\alpha}\right)\nonumber\\
&\geq  \|y\|_{L_1(Q)}\,Q\left(y^\prime>\frac{\log\alpha+\log \sup_{\gamma\in\Gamma}\int_{\Omega}|f_\gamma|\,dQ}{\alpha}\right)\nonumber\\
&\geq  \|y\|_{L_1(Q)}\,Q\left(y^\prime>\frac{\log\alpha+\log \sup_{\gamma\in\Gamma}\int_{\Omega}|f_\gamma|\,dQ}{\alpha}\,\,\forall y^\prime\in\overline{\text{co}}(\overline{\mathcal{V}})\right)\label{eq - proof thm 2 000}
\end{align}
Now taking limits of both sides of the inequality in~(\ref{eq - proof thm 2 000}):
\begin{align*}
\lim_{\|y\|_{L_1(Q)}\rightarrow \infty}\int_{\Omega}e^{y}\,dQ & = \infty\,\lim_{\|y\|_{L_1(Q)}\rightarrow \infty}Q\left(y^\prime>\frac{\log\alpha+\log \sup_{\gamma\in\Gamma}\int_{\Omega}|f_\gamma|\,dQ}{\alpha}\,\,\forall y^\prime\in\overline{\text{co}}(\overline{\mathcal{V}})\right)\\
& = \infty\,\lim_{\alpha\rightarrow \infty}Q\left(y^\prime>\frac{\log\alpha+\log \sup_{\gamma\in\Gamma}\int_{\Omega}|f_\gamma|\,dQ}{\alpha}\,\,\forall y^\prime\in\overline{\text{co}}(\overline{\mathcal{V}})\right)\\
& = \infty\,Q\left(y^\prime>0\,\,\forall y^\prime\in\overline{\text{co}}(\overline{\mathcal{V}})\right)=\infty,
\end{align*}
since $\sup_{\gamma\in\Gamma}\int_{\Omega}|f_\gamma|\,dQ<\infty$ and $Q\left(y^\prime>0\,\,\forall y^\prime\in\overline{\text{co}}(\overline{\mathcal{V}})\right)>0$ by Conditions (ii) and (iii) of Assumption~\ref{Assumption - Fenchel}, respectively. Therefore (\ref{eq - proof thm 2 00}) holds, as desired.

\par As Part (i) of Assumption~\ref{Assumption - Fenchel} implies that $\mathcal{D}\neq\emptyset$, the arguments above establish that the set of minimizers is nonempty, i.e., $\arginf\left\{\int_{\Omega}e^{y}\,dQ: y\in \mathcal{D}\right\}\neq\emptyset$, holds. Now we shall establish the uniqueness a.s.$-Q$ of the minimizer. Part (i) of Assumption~\ref{Assumption - Fenchel} implies that the minimizers cannot be where the objective function equals $\infty.$ Combining this implication with the strict convexity of the map $y\mapsto \int_{\Omega}e^{y}\,dQ$ on $\mathcal{D}$, implies that there is a unique minimizer (up to equivalence class). Let $\beta\in(0,1)$ and $y_1,y_2\in \mathcal{D}$ such that $y_1\neq y_2$ holds as equivalence classes. Additionally, let $y_3=\beta\, y_1+(1-\beta)\,y_2$. Then $\int_{\Omega}e^{y_3}\,dQ< \beta\int_{\Omega}e^{y_1}\,dQ+(1-\beta)\int_{\Omega}e^{y_3}\,dQ$, holds, by the strict convexity of the exponential function. This establishes the strict convexity of the map, and hence, the set of minimizers $\arginf\left\{\int_{\Omega}e^{y}\,dQ: y\in \mathcal{D}\right\}$, is unique up to equivalence class.

\par Next, we develop the representation of the $I$-projection's $Q$-density. From the above arguments let $y_0=\arginf\left\{\int_{\Omega}e^{y}\,dQ: y\in \mathcal{D}\right\}$. The set $\mathcal{D}$ is convex, and the objective function $g(y)=\int_{\Omega}e^{y}\,dQ$ is G\^{a}teaux differentiable, then by Theorem~2 on page 178 of~\citet{Luenberger}, $\frac{d}{dt}g\left(y_0+t(y-y_0)\right)\mid_{t=0}\,\geq0$ $\forall y\in\mathcal{D}$,
yielding $\int_{\Omega}(y-y_0)e^{y_0}\,dQ\geq0$ $\forall y\in\mathcal{D}$. By choosing $y=cy_0$ first with $c>1$ and then with $c<1$ (since $\mathcal{D}$ is also a cone), we obtain
\begin{align}\label{eq - proof thm 2 0}
\int_{\Omega}y_0e^{y_0}\,dQ=0,\;\text{and}\quad \int_{\Omega}ye^{y_0}\,dQ\geq0\;\forall y\in\mathcal{D}.
\end{align}
Let $p_Q=\frac{e^{y_0}}{\int_{\Omega}e^{y_0}\,dQ}$, and note that the second part of~(\ref{eq - proof thm 2 0}) implies that $\int_{\Omega}v\,p_Q\,dQ\geq0\quad\forall v\in\mathcal{V}$; hence, $p_Q\in\mathcal{M}$. Furthermore, $m(p_Q)+\log\left(\int_{\Omega}e^{y_0}\,dQ\right)=\int_{\Omega}y_0e^{y_0}\,dQ=0$, holds, by the first part of~(\ref{eq - proof thm 2 0}). Hence, by Theorem~2.2 of~\citet{Bhatacharya-Dykstra}, $p_Q$ solves the $I$-projection problem.

\par {\bf Part 2}. The proof proceeds by the direct method. We must establish that $y_0\in\overline{\text{span}_+(B)}$, holds. Since $Q$ violates the moments inequality restrictions, $y_0\in\mathcal{D}$ implies that $y_0=\alpha_0\,y^\prime_0$ with $\alpha_0>0$ and $y^\prime_0\in \overline{\text{co}}(\overline{\mathcal{V}})$. The result of Lemma~\ref{Lemma - KMT and MT} implies that there are only two cases to consider in establishing the desired result: (i) $y^\prime_0\in\text{ex}\left(\overline{\text{co}}(\overline{\mathcal{V}})\right)$, and (ii) $y^\prime_0\not\in\text{ex}\left(\overline{\text{co}}(\overline{\mathcal{V}})\right)$.

\par Starting with case (i), since $\text{ex}\left(\overline{\text{co}}(\overline{\mathcal{V}})\right)\subset\overline{\mathcal{V}}$ also by Lemma~\ref{Lemma - KMT and MT}, it must be that $y^\prime_0\in \overline{\mathcal{V}}$. Hence, either $y^\prime_0\in\mathcal{V}$ or $y^\prime_0\in\mathcal{V}^{\prime}$ (i.e., a limit point of $\mathcal{V}$ in the $L_1(Q)$-norm). If $y^\prime_0\in\mathcal{V}$, then 
\begin{align}\label{eq - proof thm 2 1 0}
\int_{\Omega}y_0e^{y_0}\,dQ=0\iff \int_{\Omega}y_0\,dP_Q=0\iff \alpha_0\int_{\Omega}y^\prime_0\,dP_Q=0,
\end{align}
holds. Now by the second part of~(\ref{eq - proof thm 2 0}) and that $\alpha_0>0$, we deduce that $\int_{\Omega}y^\prime_0\,dP_Q=0$. Consequently, $y^\prime_0\in B$, and hence, $y_0=\alpha_0\,y^\prime_0\in\overline{\text{span}_+(B)}$. Now suppose that $y^\prime_0\in\mathcal{V}^{\prime}$. As $\mathcal{V}^{\prime}\subset\overline{\text{co}}(\overline{\mathcal{V}})$, the equalities~(\ref{eq - proof thm 2 1 0}) must also hold in this case. Since $\alpha_0>0$, it follows that $\int_{\Omega}y^\prime_0\,dP_Q=0$, implying $y^\prime_0\in \overline{B}$, and hence, $y_0=\alpha_0\,y^\prime_0\in\overline{\text{span}_+(B)}$ as $\overline{B}\subset\overline{\text{span}_+(B)}$.

\par Next, consider case (ii): $y^\prime_0\not\in\text{ex}\left(\overline{\text{co}}(\overline{\mathcal{V}})\right)$. Then, $\exists n\in\mathbb{Z}_+$, $p_i>0$ for each $i=1,\ldots,n$ such that $\sum_{i=1}^{n}p_i=1$, for which $y_0=\alpha_0 \sum_{i=1}^{n}p_iv_i$, where $\alpha_0>0$ and $\left\{v_1,\ldots,v_n\right\}\subset\text{ex}\left(\overline{\text{co}}(\overline{\mathcal{V}})\right)\subset\overline{\mathcal{V}}$. Consequently,
\begin{align}\label{eq - proof thm 2 1}
\int_{\Omega}y_0e^{y_0}\,dQ=0\iff \int_{\Omega}y_0\,dP_Q=0\iff \alpha_0\sum_{i=1}^{n}p_i\int_{\Omega}v_i\,dP_Q=0,
\end{align}
holds. Now by the second part of~(\ref{eq - proof thm 2 0}) and that $\alpha_0,p_1,\ldots,p_n>0$, the equality~(\ref{eq - proof thm 2 1}) forces $\int_{\Omega}v_i\,dP_Q=0$ for each $i$. Finally, since $\text{ex}\left(\overline{\text{co}}(\overline{\mathcal{V}})\right)\subset \overline{\mathcal{V}}$, we must have that $\{v_1,\ldots,v_n\}\subset\overline{B}$, and hence, $y_0\in\overline{\text{span}_+(B)}$.
\end{proof}

\par The first result of Theorem~\ref{Thm - Fenchel} shows the $I$-projection is a member of the exponential family of distributions under Assumption~\ref{Assumption - Fenchel}. Furthermore, the representation yields the duality relationship
\begin{align}\label{eq - duality}
m(p_Q)=-\log\left(\int_{\Omega}e^{y_0}\,dQ\right).
\end{align}
The result of Part 2 of Theorem~\ref{Thm - Fenchel} indicates that the unique minimizer of the dual problem, $y_0$, is either a finite linear combination of the moment functions in $B$ with positive coefficients, or is a limit point of such combinations in the $L_1(Q)$-norm. Assumption~\ref{Assump - L_1 and total boundedness} is critical to obtaining this result. 

\par The computability of $y_0$ in~(\ref{eq - Representation}) is key to adopting the proposed duality approach in practice. Towards that end, we develop a reparametrization of the dual problem~(\ref{eq - Fenchel Dual Problem}) that enables the computation of $y_0$ using a simple approximation scheme. The reparametrization is derived from a representation of the elements of $\mathcal{D}$ in terms of the integrator in a weak vector-valued integral. Under Assumption~\ref{Assump - L_1 and total boundedness}, the set $\overline{\mathcal{V}}$ is compact in the $L_1(Q)$-norm. Furthermore, since $L_1(Q)$ with its norm topology is also Fr\'{e}chet space, the subset of $L_1(Q)$ given by $\overline{\text{co}}(\overline{\mathcal{V}})$ is also compact in the norm topology. Denote by $\mathcal{P}\subset C\left(\overline{\mathcal{V}}\right)^{*}$ the set of Radon probability measures on the set $\overline{\mathcal{V}}$. An application of Theorem~3.28 in~\citet{Rudin-Book} yields
\begin{align}\label{eq - Result of Rudin}
y\in\overline{\text{co}}(\overline{\mathcal{V}})\iff \exists\mu\in\mathcal{P}\;\text{such that}\;y=\int_{\overline{\mathcal{V}}}v\,d\mu(v),
\end{align}
where the integral in~(\ref{eq - Result of Rudin}) is to be understood as in Definition~\ref{Def - GF integral}. The characterization~(\ref{eq - Result of Rudin}) is the building block of the reparametrization, as
\begin{align}\label{eq - Result of Rudin Extended}
y\in\mathcal{D}\iff \exists\mu\in\mathcal{P}\,\text{and}\,\alpha\geq0\;\text{such that}\;y=\alpha\,\int_{\overline{\mathcal{V}}}v\,d\mu(v).
\end{align}
Next, define $\Xi\subset C\left(\overline{\mathcal{V}}\right)^{*}$ as the set of all positive Radon measures on $\overline{\mathcal{V}}$, and consider the following set
\begin{align}\label{eq - Upsilon set}
\Upsilon=\left\{\xi\in\Xi: \xi=\alpha\cdot\mu,\alpha\geq0\,\text{and}\,\mu\in\mathcal{P}\right\}.
\end{align}
The dual problem~(\ref{eq - Fenchel Dual Problem}) can now be reparametrized as
\begin{align}\label{eq - Reparametrized Fenchel Dual Problem}
\inf\left\{\int_{\Omega}e^{\int_{\overline{\mathcal{V}}}v\,d\xi(v)}\,dQ:\xi\in\Upsilon\right\}.
\end{align}

\par In the above framework, the following theorem establishes the existence of a solution to~(\ref{eq - Reparametrized Fenchel Dual Problem}).
\begin{theorem}[Reparametrization of the dual]\label{Thm - existence reparametrization}
Suppose that Assumptions~\ref{Assumption - Fenchel} and~\ref{Assump - L_1 and total boundedness}, hold. Let $\Upsilon$ be given by~(\ref{eq - Upsilon set}). Then  $\arginf\left\{\int_{\Omega}e^{\int_{\overline{\mathcal{V}}}v\,d\xi(v)}\,dQ:\xi\in\Upsilon\right\}\neq\emptyset$ and $$\arginf\left\{\int_{\Omega}e^{\int_{\overline{\mathcal{V}}}v\,d\xi(v)}\,dQ:\xi\in\Upsilon\right\}=\left\{\xi\in\Upsilon: y_0=\int_{\overline{\mathcal{V}}}v\,d\xi(v) \right\},$$
hold, where $y_0$ is given by~(\ref{eq - Representation}).
\end{theorem}
\begin{proof}
 The proof proceeds by the direct method. It follows steps similar to that in the proof of Part 1 of Theorem~\ref{Thm - Fenchel}. The goal is to establish under our assumptions that $\arginf\left\{ \int_{\Omega}e^{\int_{\overline{\mathcal{V}}}v\,d\xi(v)}\,dQ: \xi\in\Upsilon\right\}\neq\emptyset$ (i.e., existence of a minimizer), holds. As Part (i) of Assumption~\ref{Assumption - Fenchel} implies that $\Upsilon\neq\emptyset$, it is sufficient to establish that the map $\xi\mapsto \int_{\Omega}e^{\int_{\overline{\mathcal{V}}}v\,d\xi(v)}\,dQ$ is lower semi-continuous and coercive with respect to the weak-star topology of $C\left(\overline{\mathcal{V}}\right)^*$ on $\Upsilon$.

\par Toward that end, we first prove that this map is weak-star lower semi-continuous on $\Upsilon$. Let $\{\xi_n\}_{n\geq1}\subset\Upsilon$ be such that $\xi_{n}\stackrel{w^*}{\longrightarrow}\xi$, where $\xi\in\Upsilon$. Using the fact that the map $\xi\mapsto\int_{\overline{\mathcal{V}}}v\,d\xi$ is continuous when $C\left(\overline{\mathcal{V}}\right)^*$ is given the weak-star topology and $L_1(Q)$ has the weak topology, it follows that $\int_{\overline{\mathcal{V}}}v\,d\xi_{n}\stackrel{w}{\longrightarrow}\int_{\overline{\mathcal{V}}}v\,d\xi$. Since the exponential function is continuous on $\mathbb{R}$, it follows that $e^{\int_{\overline{\mathcal{V}}}v\,d\xi_{n}}\stackrel{w}{\longrightarrow} e^{\int_{\overline{\mathcal{V}}}v\,d\xi}$. Now by the Skorohod Representation Theorem (e.g., Theorem~7.2.14 in~\citet{Grimmet-Stirkazer}), there exists a probability space $(\Omega^\prime,\mathcal{F}^\prime,Q^\prime)$ and real-valued measurable functions $\{z_n\}_{n\geq 1}$ and $z$ on this probability space, such that
\begin{enumerate}[(a)]
\item $\{z_n\}_{n\geq1}$ and $z$ have the same probability distributions as $\{e^{\int_{\overline{\mathcal{V}}}v\,d\xi_{n}}\}_{n\geq 1}$ and $e^{\int_{\overline{\mathcal{V}}}v\,d\xi}$, respectively, and
\item $\lim_{n\rightarrow\infty}z_n(\omega^\prime)=z(\omega^\prime)$ a.s.$-Q^\prime$.
\end{enumerate}
One can set $\Omega^\prime=[0,1]$ with $\{z_n\}_{n\geq1}$ and $z$ being the quantile functions of $\{e^{\int_{\overline{\mathcal{V}}}v\,d\xi_{n}}\}_{n\geq 1}$ and $e^{\int_{\overline{\mathcal{V}}}v\,d\xi}$, respectively. Consequently, $z_n(\omega^{\prime}),z(\omega^{\prime})\geq 0$ for all $\omega^{\prime}\in [0,1]$. Therefore, by Fatou's Lemma,
\begin{align*}
\int_{\Omega}e^{\int_{\overline{\mathcal{V}}}v\,d\xi_0}\,dQ=\int_{[0,1]}z\,dQ^{\prime}\leq\liminf_{n\rightarrow\infty}\int_{[0,1]}z_n\,dQ^{\prime}=\liminf_{n\rightarrow\infty}\int_{\Omega}e^{\int_{\overline{\mathcal{V}}}v\,d\xi_{n}}\,dQ.
\end{align*} 

\par Next, we show the objective function is weak-star coercive on $\Upsilon$. This means establishing
\begin{align}\label{eq - proof thm 3 00}
\int_{\Omega}e^{\int_{\overline{\mathcal{V}}}v\,d\xi(v)}\,dQ\rightarrow\infty\quad \text{as}\quad \int_{\overline{\mathcal{V}}}g(v)\,d\xi(v)\rightarrow \pm\infty\;\forall g\in C\left(\overline{\mathcal{V}}\right),
\end{align}
where the latter divergence restricts $\xi\in\Upsilon$. Note that $\int_{\overline{\mathcal{V}}}g(v)\,d\xi(v)=\alpha\int_{\overline{\mathcal{V}}}g(v)\,d\mu(v)$ holds for every $\xi\in\Upsilon$, and $\left|\int_{\overline{\mathcal{V}}}g(v)\,d\mu(v)\right|\leq \int_{\overline{\mathcal{V}}}|g(v)|\,d\mu(v)\leq\sup_{v\in\overline{\mathcal{V}}}|g(v)| <\infty$ since $\overline{\mathcal{V}}$ is compact by Assumption~\ref{Assump - L_1 and total boundedness} and $\mu\in\mathcal{P}$. Therefore, divergences $\int_{\overline{\mathcal{V}}}g(v)\,d\xi(v)\rightarrow\pm\infty$ $\forall g\in C\left(\overline{\mathcal{V}}\right)$ arise only when $\alpha\rightarrow\infty$. To that end, by the tail sum representation of the objective function:
\begin{align*}
\int_{\Omega}e^{\int_{\overline{\mathcal{V}}}v\,d\xi(v)}\,dQ & =\int_{0}^{\infty}Q\left(\omega\in\Omega:e^{\int_{\overline{\mathcal{V}}}v\,d\xi(v)}>t\right)\,dt  \\ & =\int_{0}^{\infty}Q\left(\omega\in\Omega:\int_{\overline{\mathcal{V}}}v\,d\mu(v)>\frac{\log t}{\alpha}\right)\,dt  \\
& \geq \int_{1}^{\infty}Q\left(\omega\in\Omega:\int_{\overline{\mathcal{V}}}v\,d\mu(v)>\frac{\log t}{\alpha}\right)\,dt.
\end{align*}
Now from the characterization~(\ref{eq - Result of Rudin}), $\exists y^\prime\in\overline{\mathcal{V}}$ such that $y^\prime=\int_{\overline{\mathcal{V}}}v\,d\mu(v)$, so that
\begin{align*}
\int_{1}^{\infty}Q\left(\omega\in\Omega:\int_{\overline{\mathcal{V}}}v\,d\mu(v)>\frac{\log t}{\alpha}\right)\,dt& =\int_{1}^{\infty}Q\left(\omega\in\Omega: y^\prime>\frac{\log t}{\alpha}\right)\,dt, \\
& \geq\int_{1}^{\infty}Q\left(\omega\in\Omega:y^\prime>\frac{\log t}{\alpha}\,\forall y^\prime\in\overline{\text{co}}(\overline{\mathcal{V}})\right)\,dt.
\end{align*}

\par Now, let $h_\alpha(t)=Q\left(\omega\in\Omega:y^\prime>\frac{\log t}{\alpha}\,\forall y^\prime\in\overline{\text{co}}(\overline{\mathcal{V}})\right)$, and observe that $h_{\alpha_1}(\cdot)\leq h_{\alpha_2}(\cdot)$ for any $\alpha_1\leq \alpha_2$, and that $h_\alpha(\cdot)$ is measurable on $(\mathbb{R},\mathcal{B}\left(\mathbb{R}\right))$ for each $\alpha\geq0$. Furthermore, $h_\infty(t)=\lim_{\alpha\rightarrow\infty}h_\alpha(t)=Q\left(\omega\in\Omega:y^\prime>0\,\forall y^\prime\in\overline{\text{co}}(\overline{\mathcal{V}})\right)$, for each $t\geq1$, and this limit is positive by Condition (iii) of Assumption~\ref{Assumption - Fenchel}. Since $\int_{\overline{\mathcal{V}}}g(v)\,d\xi(v)\rightarrow\pm\infty$ $\forall g\in C\left(\mathcal{V}\right)$ implies $\alpha\rightarrow\infty$, it follows that $$\lim_{\alpha\rightarrow\infty}\int_{1}^{\infty}Q\left(\omega\in\Omega:y^\prime>\frac{\log t}{\alpha}\,\forall y^\prime\in\overline{\text{co}}(\overline{\mathcal{V}})\right)\,dt=\int_{1}^{\infty}Q\left(\omega\in\Omega:y^\prime>0\,\forall y^\prime\in\overline{\text{co}}(\overline{\mathcal{V}})\right)\,dt=\infty,$$ holds, by the Monotone Convergence Theorem (e.g., Theorem 2.14 in \citet{Folland}). Consequently, applying this limit to the above chain of inequalities, establishes that~(\ref{eq - proof thm 3 00}), holds.

\par Now we prove $\arginf\left\{\int_{\Omega}e^{\int_{\overline{\mathcal{V}}}v\,d\xi(v)}\,dQ:\xi\in\Upsilon\right\}=\left\{\xi\in\Upsilon: y_0=\int_{\overline{\mathcal{V}}}v\,d\xi(v) \right\}$. We proceed by contradiction with the direction: $$\xi^\prime\in\arginf\left\{\int_{\Omega}e^{\int_{\overline{\mathcal{V}}}v\,d\xi(v)}\,dQ:\xi\in\Upsilon\right\}\implies \xi^\prime\in\left\{\xi\in\Upsilon: y_0=\int_{\overline{\mathcal{V}}}v\,d\xi(v) \right\}.$$ Suppose $\xi^\prime\in\arginf\left\{\int_{\Omega}e^{\int_{\overline{\mathcal{V}}}v\,d\xi(v)}\,dQ:\xi\in\Upsilon\right\}$ and $\xi^\prime\notin \left\{\xi\in\Upsilon: y_0=\int_{\overline{\mathcal{V}}}v\,d\xi(v) \right\}$. Then, the first and second inclusions imply that
\begin{align}
\int_{\Omega}e^{\int_{\overline{\mathcal{V}}}v\,d\xi^\prime(v)}\,dQ\leq \int_{\Omega}e^{y_0}\,dQ\iff \int_{\Omega}e^{y^\prime}\,dQ\leq \int_{\Omega}e^{y_0}\,dQ,\label{eq - proof thm 3 6}
\end{align}
where $y^\prime=\int_{\overline{\mathcal{V}}}v\,d\xi^\prime(v)$. As Theorem~\ref{Thm - Fenchel} establishes that $y_0$ is the unique minimizer of~(\ref{eq - Fenchel Dual Problem}), the inequality~(\ref{eq - proof thm 3 6}) implies $y^\prime=y_0$; therefore
 $\xi^\prime\notin \left\{\xi\in\Upsilon: y_0=\int_{\overline{\mathcal{V}}}v\,d\xi(v) \right\}$ and  $\xi^\prime\in \left\{\xi\in\Upsilon: y_0=\int_{\overline{\mathcal{V}}}v\,d\xi(v) \right\}$, which is a contradiction.

\par Now we prove by contradiction the direction $$\xi^\prime\in\left\{\xi\in\Upsilon: y_0=\int_{\overline{\mathcal{V}}}v\,d\xi(v) \right\}\implies\xi^\prime\in\arginf\left\{\int_{\Omega}e^{\int_{\overline{\mathcal{V}}}v\,d\xi(v)}\,dQ:\xi\in\Upsilon\right\}.$$ Suppose $\xi^\prime\in\left\{\xi\in\Upsilon: y_0=\int_{\overline{\mathcal{V}}}v\,d\xi(v) \right\}$ and $\xi^\prime\notin\arginf\left\{\int_{\Omega}e^{\int_{\overline{\mathcal{V}}}v\,d\xi(v)}\,dQ:\xi\in\Upsilon\right\}$. Then these inclusions imply that $\int_{\Omega}e^{y_0}\,dQ=\int_{\Omega}e^{\int_{\overline{\mathcal{V}}}v\,d\xi^\prime(v)}\,dQ> \int_{\Omega}e^{\int_{\overline{\mathcal{V}}}v\,d\xi_1(v)}\,dQ$ for some \\ $\xi_1\in\arginf\left\{\int_{\Omega}e^{\int_{\overline{\mathcal{V}}}v\,d\xi(v)}\,dQ:\xi\in\Upsilon\right\}$. By~(\ref{eq - Result of Rudin Extended}), $\exists y_1\in\mathcal{D}$ such that $y_1=\int_{\overline{\mathcal{V}}}v\,d\xi_1(v)$, and hence, the above strict inequality becomes
\begin{align}\label{eq - proof thm 3 5}
\int_{\Omega}e^{y_0}\,dQ=\int_{\Omega}e^{\int_{\overline{\mathcal{V}}}v\,d\xi^\prime(v)}\,dQ> \int_{\Omega}e^{y_1}\,dQ.
\end{align}
The inequality~(\ref{eq - proof thm 3 5}) yields a contradiction as Theorem~\ref{Thm - Fenchel} implies that $\int_{\Omega}e^{y_0}\,dQ<\int_{\Omega}e^{y_1}\,dQ$ must also be true. This concludes the proof.
\end{proof}

\par Like the proof of Part 1 of Theorem~\ref{Thm - Fenchel}, we prove the existence result of Theorem~\ref{Thm - existence reparametrization} by establishing the lower semi-continuity and coercivity of the objective function
\begin{align}\label{eq - objective fxn reparametrized}
\xi\mapsto \int_{\Omega}e^{\int_{\overline{\mathcal{V}}}v\,d\xi(v)}\,dQ
\end{align}
 on the domain $\Upsilon$, but with respect to the weak-star topology of $C\left(\overline{\mathcal{V}}\right)^{*}$. Utilizing this topology on $C\left(\overline{\mathcal{V}}\right)^{*}$ is advantageous because
the Gelfand-Pettis integral $\int_{\overline{\mathcal{V}}}v\,d\xi(v)$, when identified as a mapping $\phi:C\left(\overline{\mathcal{V}}\right)^{*}\rightarrow L_1(Q)$, is continuous when $C\left(\overline{\mathcal{V}}\right)^{*}$ and $L_1(Q)$ are given their weak-star and weak topologies, respectively. The continuity of this mapping with those topologies facilitates the proof of lower semi-continuity of the objective function.

\par The result of Theorem~\ref{Thm - existence reparametrization} is that the reparametrized dual problem~(\ref{eq - Reparametrized Fenchel Dual Problem}) has a solution under the conditions of Theorem~\ref{Thm - Fenchel}. Furthermore, the solution set of this optimization problem is characterized by $y_0$ -- the solution of the Fenchel dual problem~(\ref{eq - Fenchel Dual Problem}), via the representation~(\ref{eq - Result of Rudin Extended}). It is important to note that unlike the dual problem~(\ref{eq - Fenchel Dual Problem}), the solution of the reparametrized dual problem~(\ref{eq - Reparametrized Fenchel Dual Problem}) may not be unique. This non-uniqueness arises from the non-uniqueness of the representation of $y\in\overline{\text{co}}(\overline{\mathcal{V}})$ in~(\ref{eq - Result of Rudin}), since it can be represented as the moment of different elements of $\mathcal{P}$. It is this feature of the representation that renders the objective function in~(\ref{eq - Reparametrized Fenchel Dual Problem}) as not necessarily being strictly convex on $\Upsilon$.

\par The advantage of the proposed reparametrization of the dual optimization problem is that it enables the computation of its optimal solutions using a simple approximation scheme. Since $L_1(Q)$ is a Banach space, and the identity map from $\overline{\mathcal{V}}$ into $L_1(Q)$ is continuous with respect to the $L_1(Q)$-norm on both spaces, Lemma~\ref{Lemma - Riemann Sum} shows the Gelfand-Pettis integral $\int_{\overline{\mathcal{V}}}h\,d\mu(h)$ can be approximated as the limit of ``Riemann sums'' in the $L_1(Q)$-norm. That is, given $\epsilon>0$, let $U=\left\{y\in L_1(Q): \|y\|_{L_1(Q)}\leq\epsilon\right\}$. Then, there corresponds a finite partition $\{E_i\}_{i=1}^{n}$ of $\overline{\mathcal{V}}$ such that
\begin{align}\label{eq - Riemann Sum Consruction}
\int_{\overline{\mathcal{V}}}v\,d\mu(v)-\sum_{i=1}^{n}\mu(E_i)v_i\in U\quad \forall v_i\in E_i,\;i=1\,\ldots,n.
\end{align}
Remarkably, the only structure on the partitions that is required for~(\ref{eq - Riemann Sum Consruction}) to hold is that for each $i$: $v-v^\prime\in U$ for all $v,v^\prime\in E_i$. Consequently, the partitions depend only on $\epsilon$, $Q$ and $\mathcal{V}$, and not $\mu$. This point is key to the approximation scheme put forward by this paper.

\par The sequence of discretizations of the infinite program~(\ref{eq - Reparametrized Fenchel Dual Problem}) we consider is indexed by $\{\epsilon_m\}_{m\geq1}\subset\mathbb{R}_{++}$ such that $\epsilon_m\downarrow0$. For each $m$, let $U_m=\left\{y\in L_1(Q): \|y\|_{L_1(Q)}\leq\epsilon_m\right\}.$ Then by Lemma~\ref{Lemma - Riemann Sum}, there corresponds a finite partition $\{E_{i,m}\}_{i=1}^{n_m}$ of $\overline{\mathcal{V}}$ with the property
\begin{align*}
\int_{\overline{\mathcal{V}}}v\,d\mu(v)-\sum_{i=1}^{n_m}\mu(E_{i,m})v_i\in U_m\quad \forall v_i\in E_{i,m},\;i=1\,\ldots,n_m.
\end{align*}
Under additional regularity conditions, the optimal value and minimizers of the infinite program~(\ref{eq - Reparametrized Fenchel Dual Problem}) can be approximated by the corresponding sequence of optimal values and minimizers of the finite programs as $m\rightarrow \infty$:
\begin{align}\label{eq - finite program}
\inf\left\{\int_{\Omega}e^{\alpha\sum_{i=1}^{n_m}\mu_i\,v_i}\,dQ;\;\alpha,\mu_i\geq0\;\forall i,\;\text{and}\;\sum_{i=1}^{n_m}\mu_i=1\right\},
\end{align}
where $v_i\in E_{i,m}$ are given for each $i$. These conditions are given by the following assumption
\begin{assumption}\label{Assump - Approx}
$\sup_{\gamma\in\Gamma}\int_{\Omega}e^{-\alpha f_\gamma}\,dQ<\infty\quad\forall \alpha>0$. 
\end{assumption}
\noindent The conditions of this assumption strengthens those in Condition (i) of Assumption~\ref{Assumption - Fenchel}. It implies the objective function in~(\ref{eq - finite program}) is well-defined over its domain for any choice of $v_i\in E_{i,m}$ for each $i$.

\par The next theorem formalizes our approximation results. 
\begin{theorem}[Discretization of the dual]\label{Thm - Computation}
Suppose the conditions of Theorem~\ref{Thm - existence reparametrization}, and Assumption~\ref{Assump - Approx}, hold. Let $\{\epsilon_m,U_m\}_{m\geq1}$ be described as above. For each $m$, let
$\{E_{i,m}\}_{i=1}^{n_m}$ be a partition of $\overline{\mathcal{V}}$ such that, for each $i$, $v-v^\prime\in U_m$ holds for all $v,v^\prime\in E_{i,m}$. Then, for each $m$,
\begin{align}\label{eq - existence of approximants}
\arginf\left\{\int_{\Omega}e^{\alpha\sum_{i=1}^{n_m}\mu_i\,v_i}\,dQ;\;\alpha,\mu_i\geq0\;\forall i,\;\text{and}\;\sum_{i=1}^{n_m}\mu_i=1\right\}\neq\emptyset,
\end{align}
for any $v_i\in E_{i,m}$ where $i=1,\ldots,n_m$. Furthermore, for each $m$ and $v_i\in E_{i,m}$ with $i=1,\ldots,n_m$, define the corresponding Radon measure $\xi_m=\alpha_{n_m}\sum_{i=1}^{n_m}\mu_{i,n_m} \delta_{v_i}$, where $(\alpha_{n_m},\mu_{1,n_m},\ldots,\mu_{n_m,n_m})$ is an element of the left-hand side of~(\ref{eq - existence of approximants}). Then the following statements hold.
\begin{enumerate}
\item $\lim_{m\rightarrow\infty}\int_{\Omega}e^{\alpha_{n_m}\sum_{i=1}^{n_{m}}\mu_{i,n_{m}}\,v_i}\,dQ=\inf\left\{\int_{\Omega}e^{\int_{\overline{\mathcal{V}}}v\,d\xi(v)}\,dQ:\xi\in\Upsilon\right\}$ and
\item Every accumulation point of $\left\{\xi_m\right\}_{m\geq1}$, in the weak-star topology of $C\left(\overline{\mathcal{V}}\right)^{*}$, is an element of \\ $\arginf\left\{\int_{\Omega}e^{\int_{\overline{\mathcal{V}}}v\,d\xi(v)}\,dQ:\xi\in\Upsilon\right\}$.
\end{enumerate}
\end{theorem}
\begin{proof}
We first prove the existence result~(\ref{eq - existence of approximants}). The proof proceeds by the direct method. We must consider two cases: where the partition is such that the optimal value~(\ref{eq - finite program}) is either infinite or finite. In the former case, existence holds trivially since it implies that every element in the domain is a solution. It is the latter case which requires further analysis. In that case, under the assumptions of the theorem, it is sufficient to establish the map $(\alpha,\mu_{1},\ldots,\mu_{n_m})\mapsto \int_{\Omega}e^{\alpha\sum_{i=1}^{n_m}\mu_i\,v_i}\,dQ$ is lower semi-continuous and coercive on its domain of definition, with respect to the Euclidean norm on $\mathbb{R}^{n_m+1}$. The steps for establishing these properties of the map are similar to their counterparts in the proofs of Theorems~\ref{Thm - Fenchel} and~\ref{Thm - existence reparametrization}, but with the appropriate adjustment based on using the Euclidean norm on $\mathbb{R}^{n_m+1}$ instead of the other norms used in the preceding theorems. We omit the details for brevity.

\par Now, we prove the two parts of the theorem by the direct method. \\
{\bf Part 1}. By Theorem~\ref{Thm - existence reparametrization} $\arginf\left\{\int_{\Omega}e^{\int_{\overline{\mathcal{V}}}v\,d\xi(v)}\,dQ:\xi\in\Upsilon\right\}\neq\emptyset$, and let $\xi_0=\alpha_0\,\mu_0$ be an element of that set. Given $m$,
\begin{align*}
\int_{\Omega}e^{\int_{\overline{\mathcal{V}}}v\,d\xi_0(v)}\,dQ\leq \int_{\Omega}e^{\int_{\overline{\mathcal{V}}}v\,d\xi_m(v)}\,dQ & =  \int_{\Omega}e^{\alpha_{n_m}\sum_{i=1}^{n_{m}}\mu_{i,n_m}\,v_i}\,dQ
 \leq\int_{\Omega}e^{\alpha_0\sum_{i=1}^{n_m}\mu_{0}(E_{i,m})\,v_i}\,dQ,
\end{align*}
holds, for \emph{any} choice of $v_i\in E_{i,m}$ for each $i$. The first inequality holds because $\xi_m=\alpha_{n_m}\sum_{i=1}^{n_m}\mu_{i,n_m} \delta_{v_i}\in\Upsilon$. The second inequality is due to $\alpha_{n_m},\mu_{1,n_m},\ldots,\mu_{n_m,n_m}$ being a solution of~(\ref{eq - finite program}), as the elements $\alpha_0,\mu_0(E_{1,n_m}),\ldots,\mu_0(E_{n_m,m})$ are in the domain of the finite program~(\ref{eq - finite program}). Hence, to establish the desired result, we must show 
$\lim_{m\rightarrow\infty}\int_{\Omega}e^{\alpha_0\sum_{i=1}^{n_m}\mu_{0}(E_{i,m})\,v_i}\,dQ=\int_{\Omega}e^{\int_{\overline{\mathcal{V}}}v\,d\xi_0(v)}\,dQ$.
Toward that end, for any $A>0$ let $\varphi\in C(\mathbb{R})$ be such that
$\begin{cases}
  \varphi(x)=1  & x\in[-A,A]  \\
  \varphi(x)=0  & x\in[-2A,2A]^c \\
  0\leq\varphi(x)\leq 1 & \text{ otherwise}
\end{cases}.$
We can bound the difference $\int_{\Omega}e^{\alpha_0\sum_{i=1}^{n_m}\mu_{0}(E_{i,m})\,v_i}\,dQ-\int_{\Omega}e^{\int_{\overline{\mathcal{V}}}v\,d\xi_0(v)}\,dQ$ from above by
\begin{align*}
\int_{\Omega}e^{\alpha_0\sum_{i=1}^{n_m}\mu_{0}(E_{i,m})\,v_i}\left(1-\varphi\left(\sum_{i=1}^{n_m}\mu_{0}(E_{i,m})\,v_i\right)\right)\,dQ+\int_{\Omega}e^{\int_{\overline{\mathcal{V}}}v\,d\xi_0(v)}\left(1-\varphi\left(\int_{\overline{\mathcal{V}}}v\,d\mu_0(v)\right)\right)\,dQ\\
+ \int_{\Omega}e^{\alpha_0\sum_{i=1}^{n_m}\mu_{0}(E_{i,m})\,v_i}\varphi\left(\sum_{i=1}^{n_m}\mu_{0}(E_{i,m})\,v_i\right)\,dQ-\int_{\Omega}e^{\int_{\overline{\mathcal{V}}}v\,d\xi_0(v)}\varphi\left(\int_{\overline{\mathcal{V}}}v\,d\mu_0(v)\right)\,dQ,
\end{align*}
which in turn, is bounded from above by
\begin{align}
\int_{\Omega}e^{\alpha_0\sum_{i=1}^{n_m}\mu_{0}(E_{i,m})\,v_i}1\left[\left|\sum_{i=1}^{n_m}\mu_{0}(E_{i,m})\,v_i\right|>A\right]\,dQ+\int_{\Omega}e^{\int_{\overline{\mathcal{V}}}v\,d\xi_0(v)}1\left[\left|\int_{\overline{\mathcal{V}}}v\,d\mu_0(v)\right|>A\right]\,dQ\label{eq - proof thm 3 000}\\
+ \int_{\Omega}e^{\alpha_0\sum_{i=1}^{n_m}\mu_{0}(E_{i,m})\,v_i}\varphi\left(\sum_{i=1}^{n_m}\mu_{0}(E_{i,m})\,v_i\right)\,dQ-\int_{\Omega}e^{\int_{\overline{\mathcal{V}}}v\,d\xi_0(v)}\varphi\left(\int_{\overline{\mathcal{V}}}v\,d\mu_0(v)\right)\,dQ.\label{eq - proof thm 3 001}
\end{align}

\par Observe that for each $A$, the term in~(\ref{eq - proof thm 3 001}) tends to zero as $m\rightarrow\infty$, because $x\mapsto e^{\alpha_0 x}\varphi(x)$ is a bounded and continuous function on $\mathbb{R}$ and  $\sum_{i=1}^{n_m}\mu_{0}(E_{i,m})\,v_i\stackrel{L_1(Q)}{\longrightarrow}\int_{\overline{\mathcal{V}}}v\,d\mu_0(v)$ (by Lemma~\ref{Lemma - Riemann Sum}). Focusing on the first term in~(\ref{eq - proof thm 3 000}), observe that by an application of the Cauchy-Schwartz inequality, it is bounded from above by $\left[Q\left(\left|\sum_{i=1}^{n_m}\mu_{0}(E_{i,m})\,v_i\right|>A\right)\,\int_{\Omega}e^{2\alpha_0\sum_{i=1}^{n_m}\mu_{0}(E_{i,m})\,v_i}\,dQ\right]^{1/2}$. Now this resulting term is bounded from above by $\left[A^{-1}\sup_{\gamma\in\Gamma}\int_{\Omega}|f_\gamma|\,dQ\,\sup_{\gamma\in\Gamma}\,\int_{\Omega}e^{-2\alpha_0f_\gamma}\,dQ\right]^{1/2}$, which follows from applying Markov's inequality to $Q\left(\left|\sum_{i=1}^{n_m}\mu_{0}(E_{i,m})\,v_i\right|>A\right)$ and using the convexity of the exponential function to obtain 
$$\int_{\Omega}e^{2\alpha_0\sum_{i=1}^{n_m}\mu_{0}(E_{i,m})\,v_i}\,dQ\leq \sum_{i=1}^{n_m}\mu_{0}(E_{i,m})\int_{\Omega}e^{2\alpha_0v_i}\,dQ\leq\sup_{\gamma\in\Gamma}\,\int_{\Omega}e^{-2\alpha_0f_\gamma}\,dQ,$$
 where we have used $\sum_{i=1}^{n_m}\mu_{0}(E_{i,m})=1$ and substituted out the $v_i$ in terms of the moment functions (see~(\ref{eq - moment fxns set})).
As $A$ is arbitrary, we have
$\lim_{A\rightarrow\infty}\left[A^{-1}\sup_{\gamma\in\Gamma}\int_{\Omega}|f_\gamma|\,dQ\,\sup_{\gamma\in\Gamma}\,\int_{\Omega}e^{-2\alpha_0f_\gamma}\,dQ\right]^{1/2}=0$ under Part (ii) of Assumption~\ref{Assumption - Fenchel} and Assumption~\ref{Assump - Approx}. Finally, note that the second term in~(\ref{eq - proof thm 3 000}) does not depend on $m$ and only on $A$. It also vanishes as $A\rightarrow\infty$ since $\int_{\Omega}e^{\int_{\overline{\mathcal{V}}}v\,d\xi_0(v)}\,dQ<\infty$. Therefore, by a sandwiching argument,
\begin{align}\label{eq - proof thm 3 4}
\lim_{m\rightarrow\infty}\int_{\Omega}e^{\alpha_{n_m}\sum_{i=1}^{n_{m}}\mu_{i,n_{m}}\,v_i}\,dQ=\int_{\Omega}e^{\int_{\overline{\mathcal{V}}}v\,d\xi_0(v)}\,dQ,
\end{align}
holds. This concludes the proof of the first part of the theorem.

\par {\bf Part 2}. Let $\xi^\star$ be an accumulation point of $\{\xi_m\}_{m\geq1}$ in the weak-star topology of $C\left(\overline{\mathcal{V}}\right)^*$. Therefore, there exists a subsequence $\{m_\ell\}_{\ell\geq1}$ such that
$\xi_{m_{\ell}}\stackrel{w^{*}}{\longrightarrow}\xi^{\star}$. By observing that
\begin{align*}
\int_{\Omega}e^{\int_{\overline{\mathcal{V}}}v\,d\xi_0(v)}\,dQ\leq \int_{\Omega}e^{\int_{\overline{\mathcal{V}}}v\,d\xi_{m_\ell}(v)}\,dQ & =  \int_{\Omega}e^{\alpha_{n_{m_\ell}}\sum_{i=1}^{n_{m_\ell}}\mu_{i,n_{m_\ell}}\,v_i}\,dQ \leq\int_{\Omega}e^{\alpha_0\sum_{i=1}^{n_{m_\ell}}\mu_{0}(E_{i,{m_\ell}})\,v_i}\,dQ,
\end{align*}
holds, Part 1 of this theorem implies $\lim_{\ell\rightarrow\infty}\int_{\Omega}e^{\int_{\overline{\mathcal{V}}}v\,d\xi_{m_{\ell}}}\,dQ=\int_{\Omega}e^{\int_{\overline{\mathcal{V}}}v\,d\xi_0(v)}\,dQ$. Now using the fact that the map $\xi\mapsto\int_{\overline{\mathcal{V}}}v\,d\xi$ is continuous when $C\left(\overline{\mathcal{V}}\right)^*$ is given the weak-star topology and $L_1(Q)$ has the weak topology, it follows that $\int_{\overline{\mathcal{V}}}v\,d\xi_{m_{\ell}}\stackrel{w}{\longrightarrow}\int_{\overline{\mathcal{V}}}v\,d\xi^{\star}$. Applying the Skorohod Representation Theorem, there exists a probability space $(\Omega^\prime,\mathcal{F}^\prime,Q^\prime)$ and real-valued measurable functions $\{z_\ell\}_{\ell\geq 1}$ and $z$ on this probability space, such that
\begin{enumerate}[(a)]
\item $\{z_\ell\}_{\ell\geq1}$ and $z$ have the same probability distributions as $\{e^{\int_{\overline{\mathcal{V}}}v\,d\xi_{m_\ell}}\}_{\ell\geq 1}$ and $e^{\int_{\overline{\mathcal{V}}}v\,d\xi^\star}$, respectively, and
\item $\lim_{\ell\rightarrow\infty}z_\ell(\omega^\prime)=z(\omega^\prime)$ a.s.$-Q^\prime$.
\end{enumerate}
One can set $\Omega^\prime=[0,1]$ with $\{z_\ell\}_{\ell\geq1}$ and $z$ being the quantile functions of $\{e^{\int_{\overline{\mathcal{V}}}v\,d\xi_{m_\ell}}\}_{\ell\geq 1}$ and $e^{\int_{\overline{\mathcal{V}}}v\,d\xi^\star}$, respectively. Consequently, $z_\ell(\omega^{\prime}),z(\omega^{\prime})\geq 0$ for all $\omega^{\prime}\in [0,1]$. Now we can use Fatou's Lemma to deduce the desired result:
\begin{align*}
\int_{\Omega}e^{\int_{\overline{\mathcal{V}}}v\,d\xi^\star}\,dQ=\int_{[0,1]}z\,dQ^{\prime}\leq\liminf_{\ell\rightarrow\infty}\int_{[0,1]}z_\ell\,dQ^{\prime}=\liminf_{\ell\rightarrow\infty}\int_{\Omega}e^{\int_{\overline{\mathcal{V}}}v\,d\xi_{m_\ell}}\,dQ=\int_{\Omega}e^{\int_{\overline{\mathcal{V}}}v\,d\xi_0(v)}\,dQ.
\end{align*} 
Therefore, $\xi^{\star}\in\arginf\left\{\int_{\Omega}e^{\int_{\overline{\mathcal{V}}}v\,d\xi(v)}\,dQ:\xi\in\Upsilon\right\}$ since $\xi_{0}$ is an element of that set. This concludes the proof.
\end{proof}

\par The first result of Theorem~\ref{Thm - Computation} is that the dual problem's optimal value can be approximated by the values of the finite-dimensional programs~(\ref{eq - finite program}). The second result of this theorem is that every weak-star accumulation point of a sequence of optimal solutions for the approximating programs~(\ref{eq - finite program}) is an optimal solution for the dual problem. Our use of the weak-star topology of $C\left(\overline{\mathcal{V}}\right)^{*}$ to characterize the approximation scheme's limiting behavior comes from the result of Theorem~\ref{Thm - existence reparametrization}, where we have used it to establish the weak-star lower semi-continuity and coercivity of the objective function~(\ref{eq - objective fxn reparametrized}) on $\Upsilon$.

\begin{remark}\label{Remark on Gamma Convergence}
The results of Theorem~\ref{Thm - Computation} are typical properties of Gamma-convergence; see, for example, Corollary 7.20 in~\citet{Dal_Maso} and Theorem~2.3 of \citet{Mena-Lerma-2005}. The proof of Theorem~\ref{Thm - Computation} does not use Gamma-convergence to establish the results. Furthermore, we establish Part 1 without imposing the weak-star convergence of $\left\{\xi_m\right\}_{m\geq1}$ in $\Upsilon$ --  the sequence of minimizers from our approximation scheme. Note that without this weak-star convergence, the only deduction from applying Gamma-convergence results in our setup, such as those mentioned above, would be $$\limsup_{m\rightarrow\infty}\int_{\Omega}e^{\alpha_{n_m}\sum_{i=1}^{n_{m}}\mu_{i,n_{m}}\,v_i}\,dQ=\int_{\Omega}e^{\int_{\overline{\mathcal{V}}}v\,d\xi_0(v)}\,dQ,$$ only if $\left\{\xi_m\right\}_{m\geq1}$ has at least one accumulation point. However, approximating the optimal value~(\ref{eq - Reparametrized Fenchel Dual Problem}) using the sequence of optimal values~(\ref{eq - finite program}) is feasible, regardless of whether $\left\{\xi_m\right\}_{m\geq1}$ has at least one accumulation point. The implication of this result for practice is that using our approximation scheme, it is not necessary to check that the sequence $\left\{\xi_m\right\}_{m\geq1}$ has an accumulation point a priori or even to pass through a subsequence in approximating the optimal value~(\ref{eq - Reparametrized Fenchel Dual Problem}).
\end{remark}

\par The most appealing feature of this approximation scheme is its simplicity, as one only needs to solve a finite-dimensional convex stochastic program in practice, and such programs can be solved numerically using off-the-shelf simulation-based computational routines. Prior to implementing this finite program, the practitioner must decide on a level of accuracy for the Riemann sum approximation in~(\ref{eq - Riemann Sum Consruction}); that is, a choice of $\epsilon$ and a corresponding partition $\{E_{i}\}_{i=1}^{n}$ of $\overline{\mathcal{V}}$, as well as the choice of $v_i\in E_i$ for each $i$. These are the ingredients/parameters for setting up the finite program.

\par For each $\epsilon>0$ and corresponding partition, there is more than one choice of $v_i\in E_i$ for each $i$ that delivers the accuracy~(\ref{eq - Riemann Sum Consruction}). However, this accuracy is uniform in the choice of $v_i\in E_i$ for each $i$. Furthermore, the result of Theorem~\ref{Thm - Computation} is that it holds for \emph{any} sequence of choices of those functions. Regarding implementation in practice, it is important to offer guidance on the choice of $v_i\in E_i$ for each $i$. The result of Theorem~\ref{Thm - Fenchel} suggests selecting $v_i\in \overline{B}\cap E_i$ for each $i$. The catch is that we do not know the set $\overline{B}$. However, $0=\int_{\Omega}v\,dP_Q=\frac{\int_{\Omega}ve^{y_0}\,dQ}{\int_{\Omega}e^{y_0}\,dQ}\geq \int_{\Omega}v\,dQ$ $\forall v\in\overline{B}$, by Lemma~\ref{Lemma - Inequality}, implying that $\overline{B}\subset \left\{v\in\mathcal{V}: \int_{\Omega}v\,dQ\leq0\right\}$. Consequently, we can select
 \begin{align}\label{eq - Selection of v_i}
 v_i\in \left\{v\in\mathcal{V}: \int_{\Omega}v\,dQ\leq0\right\}\cap E_i\quad \text{for each $i$},
 \end{align}
provided these intersections are non-empty. This approach can be advantageous in practice, since it can speed up numerical calculations when $\left\{v\in\mathcal{V}: \int_{\Omega}v\,dQ\leq0\right\}$ is a proper subset of $\mathcal{V}$. We demonstrate this point in Section~\ref{Section - Examples} using a numerical example based on the interval censored data problem described in Section~\ref{Section - OR Example}.

\section{Discussion}\label{Section - Discussion}
\par This section presents a discussion of the scope of our main results and implications for practice.
\subsection{Theorem~\ref{Thm - Fenchel}}
\par The results of Theorem~\ref{Thm - Fenchel} are not surprising as they mirror results one would expect in $I$-projection problems with a finite number of moment inequality restrictions (i.e., $\Gamma$ is finite). However, our results are new since the infinite $\Gamma$ setup hasn't been addressed in the $I$-projection literature. A related paper to ours is~\citet{Csiszar-conditional}, who developed results on the existence and exponential family representation of the \emph{generalized} $I$-projection, where the probability distributions' sample space is a locally convex topological vector space whose mean value is constrained to a convex set. By contrast, the results of Theorem~\ref{Thm - Fenchel} concern the $I$-projection, with the distinction between it and its generalized counterpart being their membership in the constraint set.
Notably, the generalized $I$-projection exists under weak conditions but isn't necessarily a member of the constraint set of PDs. While this property of the generalized $I$-projection is advantageous in some applications, such as statistical mechanics, it is not suitable in other applications, such as constrained statistical inference and operations research, where obeying the restrictions that define the constraint set is essential. $I$-projections, on the other hand, are elements of the constraint set but require additional regularity conditions. Furthermore, it should be noted that the $I$-projection and the generalized $I$-projection coincide when the former exists (see Remark 1 of~\citet{csiszar-matus}). Consequently, it is of interest to find conditions that yield the existence of $I$-projections in practice.

\par Assumption~\ref{Assumption - Fenchel} delivers the $I$-projections' existence when there can be infinitely many moment inequality restrictions defining the constraint set. This assumption enables the application of Theorem~4 of~\citet{Csiszar-conditional} in our setup to deduce that the constraint set $\mathcal{M}$ has the Sanov Property~\cite{Sanov}). Given the arbitrary nature of $\Gamma$ and the moment functions $\{f_{\gamma}:\gamma\in\Gamma\}$ in the derivation of our results, it is instructive to compare and contrast our results on existence and exponential family representation with the results of Theorem~4 in his paper.

\par Csisz\'{a}r focuses on the generalized $I$-projection of a reference probability measure $R$ that is defined on a locally convex topological vector space $\mathcal{T}$, where the constraint set imposes a convexity restriction on the expectation resultant of probability measures defined on $\mathcal{T}$. The expectation resultant of a probability measure $W$ defined on $\mathcal{T}$ is defined as the following Gelfand-Pettis integral:
\begin{align}\label{eq - csizar Weak Integral}
E[W]=t_0\quad\text{if}\quad\int_{\mathcal{T}} \Psi(t)\, dW(t)=\Psi(t_0)\quad\forall \Psi\in \mathcal{T}^*,
\end{align}
if such a $t_0$ exists, where else $E[W]$ is undefined. Here $\mathcal{T}^*$ is the topological dual of $\mathcal{T}$. The expectation resultant is a generalization of the notion of mean value of a measure on Euclidean spaces to topological vector spaces. The constraint set is $\left\{W: E[W]\in C\right\}$, where $C\subset \mathcal{T}$ is convex. To ensure existence of this weak integral so that his constraint set is well-defined, Csisz\'{a}r assumes that the reference probability measure $R$ is convex-tight. This property of $R$ means that there is an increasing sequence of compact and convex subsets of $\mathcal{T}$, $K_1\subset K_2\cdots,$ such that $R(K_n)\rightarrow 1$ as $n\rightarrow+\infty$. Its advantage is that it permits the use of the constraint set
$$\left\{W: E[W]\in C\quad\text{and}\quad W(K_n)=1\;\text{for some $n$}\right\}$$ in analyzing of the $I$-projection problem, ensuring the existence of the weak integral $E[W]$ in~(\ref{eq - csizar Weak Integral}). The convex-tight property of $R$ is also a crucial technical condition that facilitates the development of existence and exponential family representation results in Csiz\'{a}r's setup.

\par The $I$-projection problem of our paper can be formulated in terms of Csisz\'{a}r's general setup, but a general treatment is not possible, as the practical choice of $\mathcal{T}$ and $\sigma$-algebra, $\mathbb{T}$, can depend on $\Gamma$ and/or additional properties of the path functions, $\gamma\mapsto f_\gamma(\omega)$ for $\omega\in\Omega$. Let $\Phi:\Omega\rightarrow\mathcal{T}$ with $\Phi(\omega)$ is defined as the function $\gamma\mapsto f_{\gamma}(\omega)$ on the domain $\Gamma$, be measurable $\mathcal{F}/\mathbb{T}$. Accordingly, the constraint set, $\mathcal{M}$, in Csisz\'{a}r's setup then has the formulation 
\begin{align}\label{eq - Const Set Gen}
\left\{W\,\text{is a PD on}\,\left(\mathcal{T},\mathbb{T}\right): W=P\circ \Phi^{-1},\,\text{for}\,P\,\text{such that}\,\frac{dP}{dQ}\in\mathcal{M}\right\},
\end{align} 
where $W=P\circ \Phi^{-1}$ is the pushforward measure of $P$.

\par A general treatment is possible when $\text{card}(\Gamma)\leq\text{card}(\mathbb{N})$: the specification $\mathcal{T}=\mathbb{R}^{\Gamma}$, the vector space of all real-valued functions, however irregular, and $\mathbb{T}=\mathcal{R}^{\Gamma}$,
the product $\sigma$-algebra. In this setting, $\mathcal{R}^{\Gamma}$ coincides with the Borel $\sigma$-algebra generated by the product topology (e.g., Lemma 2.1 in~\citet{kallenberg2021foundations}). Consequently, this choice is suitable since $\mathbb{R}^{\Gamma}$ with the product topology becomes a separable $F$-space, where separability follows by an application of Theorem~2.46 in~\citet{Patty}. Note that being an $F$-space means the topology is induced by a complete invariant metric (Part (e) in Section 1.8 of~\citet{Rudin-Book}). Without loss of generality, suppose that $\Gamma=\mathbb{N}$, then by Theorem 2.50 in~\citet{Patty}, the metric is given by $$d(x,y)=\sup\left\{\min\{|x_i-y_i|,1\}/i,i\in\mathbb{N}\right\}\quad \text{for all}\, x,y\in\mathbb{R}^{\Gamma}.$$ Finally, $\mathbb{R}^{\Gamma}$ equipped with this topology is also locally convex, rendering it a separable Fr\'{e}chet space, and PDs defined on such spaces are necessarily convex-tight (see p.775 in \citet{Csiszar-conditional}). The significance of convex-tightness of these PDs is that we can define expectation as a Gelfand-Pettis integral, as in~(\ref{eq - csizar Weak Integral}), since such PDs concentrate on compact convex sets. 

\par Now consider the case $\text{card}(\Gamma)\geq\text{card}(\mathbb{R})$ (i.e., assuming the Continuum Hypothesis). While the specification $\left(\mathcal{T},\mathbb{T}\right)=\left(\mathbb{R}^{\Gamma},\mathcal{R}^{\Gamma}\right)$
is still feasible, $\mathcal{R}^{\Gamma}$ is inadequate because it can fail to contain subsets of $\mathbb{R}^{\Gamma}$ with desirable regularity properties, e.g., linear functions, continuous functions, polynomials, and constants (see the discussion in Section 36 in~\citet{Billingsley1}). For this reason, a more practical alternative approach is to consider the properties of the path functions $\left\{\gamma\mapsto f_\gamma(\omega), \omega\in\Omega\right\}$ in determining a suitable embedding. A leading scenario that covers many practical examples is when those properties enable specification of $\mathcal{T}$ as a Banach space with a suitable choice for $\mathbb{T}$, as PDs on such spaces are also necessarily convex-tight (see p.775 in \citet{Csiszar-conditional}). A canonical example is the setup where $\Gamma\subset\mathbb{R}$ is compact, and the path functions are continuous for each $\omega\in\Omega$. Then, the specification 
$\left(\mathcal{T},\mathbb{T}\right)=\left(C(\Gamma),\mathcal{B}\left(C(\Gamma)\right)\right)$ fits the bill, where $\mathcal{B}\left(C(\Gamma)\right)$ is the Borel $\sigma$-algebra generated by the norm topology.  

\par Concerning the examples on interval censored data (in Section~\ref{Section - Examples}) and stochastic dominance (in the Supplementary Material), these can also be formulated into Csisz\'{a}r's setup using this approach. In those examples, the moment functions are uniformly bounded on $\Omega\times\Gamma$. Therefore, the specification $\mathcal{T}=L_\infty\left(\Gamma,\mathcal{S},\nu\right)$, the Lebesgue space of bounded functions on a measure space $\left(\Gamma,\mathcal{S},\nu\right)$, is suitable, and equipping this function space with the essential supremum norm renders it a Banach space (e.g., Theorem 6.8(c) in \citet{Folland}). Accordingly, we can consider various $\sigma$-algebras on this function space, depending on the desired measurability, such as the Ball $\sigma$-algebra and the Borel $\sigma$-algebra (generated by the norm topology). Therefore, we can apply Theorem 4 of~\citet{Csiszar-conditional} in our setup to deduce the desired result.

\par Given a random sample, $X_1, X_2,X_3,\ldots, X_n,$ from the PD $Q$, denote by $Q^{\otimes_n}$ their joint PD. Then, under Assumption~\ref{Assumption - Fenchel}, the empirical distribution $\hat{Q}_n$ of the sample satisfies
\begin{align*}
\lim_{n\rightarrow\infty}n^{-1}\log Q^{\otimes_n}\left[\hat{Q}_n\in\mathcal{M}\right]=-m(p_Q),
\end{align*}
where $m(\cdot)$ is the objective function in~(\ref{eq - KL Min Problem}) and $p_Q$ is given by~(\ref{eq - Representation}). In words, the Sanov property describes the probability bound of the event $\{\hat{Q}_n\in\mathcal{M}\}$, showing that it is exponentially small with rate function given by the optimal value of the $I$-projection problem. Large deviation results of this sort have been successfully applied to studying the probability of rare events, in areas such as  finance~(e.g.,~\citet{BOYLE2007971}), optimal inference in econometrics and statistics (e.g.,~\citet{Hoeffding-1965} and~\citet{Canay}), data compression algorithms in information theory (e.g.,~\citet{Dembo-Kontoyiannis}), stochastic approximation methods in optimization~(e.g.,~\citet{book-SAA}), and thermodynamics in statistical physics (e.g.,~\citet{PhysRevE.102.052117}).

\subsection{Approximation of $I$-Projections}\label{Subsection - Approx I projection}
\par This section compares the approximation scheme of our paper with the one put forward by~\citet{bhattacharya2006} in the context of moment inequality constraints. Bhattacharya's algorithm applies to $I$-projection problems whose constraint sets can be represented as the finite intersection of sets that are variation-closed and convex, provided that closed-form expressions of the $I$-projections onto the intersecting sets can be obtained. The constraint sets our paper considers have this finite intersection representation, however, without imposing further structure on the $I$-projection problem in our setup, obtaining these individual $I$-projections can be a very challenging (if not impossible). By contrast, the advantage of our approximation scheme is that it does not rely upon the use of such closed-form expressions to be applicable in practice.

\par We discuss two examples of marginal stochastic order that intersect the setup in Section~4 of~\citet{bhattacharya2006}. These examples illustrate how closed-form expressions of $I$-projections can be obtained by imposing a number of $L_2(Q)$ restrictions on the $I$-projection problem. A typical restriction of this sort is $\left\|p\right\|_{L_2(Q)}<\infty$, where $p$ is a density function. It restricts the shape of $p$ by excluding densities from the constraint set that blow up too rapidly near some point. While convenient and simple, such shape restrictions can be confining in practice.

\par The first example considers the marginal stochastic order constraint set with a given marginal distribution -- see Example~\ref{Example - SD} below. As such a constraint set can be formulated as an isotonic cone restriction on $Q$-densities, results from~\citet{Bhatacharya-Dykstra} that connect infinite-dimensional $I$-projection problems with infinite-dimensional least-squares problems are useful for obtaining closed-form solutions of the former. However, this approach to solving the $I$-projection problem comes at the expense of imposing $L_2(Q)$ restrictions on the isotonic cone, the constraint set of $Q$-densities, and the $Q$-density of the given marginal distribution. This example is a moment inequality model, and we demonstrate the applicability of our approximation scheme without having to impose any of those $L_2(Q)$ restrictions.
\begin{example}\label{Example - SD}
Let $\Omega=[0,1]$, $Q$ be the uniform distribution on $\Omega$, and let $G$ be a given cumulative distribution function defined on $\Omega$. Consider the constraint set
\begin{align}\label{eq - constraint set Example 1}
\mathcal{M}=\left\{p\in L_{1}(Q):m(p)<\infty \;\text{and}\;\int_{0}^{\gamma}p\,dQ\leq G(\gamma)\;\;\forall\gamma\in[0,\overline{\gamma}] \right\},
\end{align}
where $0<\overline{\gamma}<1$ is given, so that $\Gamma=[0,\overline{\gamma}]$. In this example, the moment functions are $f_\gamma(\omega)=1[\omega\leq \gamma]-G(\gamma)$, and it is a straightforward task to show that
\begin{align*}
\int_{\Omega}e^{-\alpha f_\gamma}\,dQ   \leq e^{2\alpha} &<\infty\;\forall \alpha\geq 0\;\text{and}\;\forall \gamma\in\Gamma,\quad \sup_{\gamma\in\Gamma}\int_{\Omega}|f_{\gamma}|\,dQ\leq 2,\quad \text{and}\\
Q\left(\omega\in\Omega: f_{\gamma}(\omega)<0\;\forall \gamma\in\Gamma\right) & =Q\left(\omega\in(\overline{\gamma},1)\right)=1-\overline{\gamma}.
\end{align*}
Consequently, we invoke the result of Part 1 of Theorem~\ref{Thm - Fenchel} to solve this $I$-projection problem via its corresponding dual problem. This conclusion holds for any $\overline{\gamma}\in(0,1)$. From the above calculations, setting $\overline{\gamma}=1$ results in $Q\left(\omega\in\Omega: f_{\gamma}(\omega)<0\;\forall \gamma\in\Gamma\right)=0$, which violates Part (iii) of Assumption~\ref{Assumption - Fenchel}. Recall that in the proof of Theorem~\ref{Thm - Fenchel}, this condition was helpful for establishing the norm-coercivity of the objective function in the dual problem. Without this condition the map $y\mapsto\int_{\Omega}e^{y}$ may not be norm-coercive on $\mathcal{D}$. Proposition~S1.1 in the Supplementary Material establishes its coercivity provided the set of indices of moments that violate the inequality constraints under the reference PD, $Q$, given by $\Delta=\left\{\gamma\in\Gamma: G(\gamma)<\gamma\right\}$, satisfies $0<\inf\Delta$ and $\sup\Delta<1$, so that the closure of $\Delta$ is in the interior of $\Gamma$. Otherwise, we must use the theory of Gamma-Convergence to establish $\arginf\left\{\int_{\Omega}e^{y}\,dQ: y\in \mathcal{D}\right\}\neq\emptyset$ -- Proposition~S1.2 in the Supplementary Material presents this result. The structure we exploit in developing these results for those cases is that the moment functions satisfy $\sup_{\gamma\in\Gamma,\omega\in\Omega}|f_\gamma(\omega)|<\infty$. Then existence of the $I$-projection of $Q$ on the constraint set~(\ref{eq - constraint set Example 1}) and its exponential family representation for its $Q$-density follows from steps identical to those in the proof of Theorem~\ref{Thm - Fenchel}.
 
\par The least-squares approach to solving this $I$-projection problem uses results from Bhatacharya and Dykstra~\citet{Bhatacharya-Dykstra}, which apply only if $\mathcal{M}$ in~(\ref{eq - constraint set Example 1}) satisfies $\mathcal{M}\subset L_2(Q)$ and $\frac{dG}{dQ}\in L_2(Q)$. This additional structure on $\mathcal{M}$ and $G$ constrains the shapes of the feasible $Q$-densities and $G$, which can be restrictive in practice. An example of a distribution $G$ such that $\frac{dG}{dQ}$ exists but $\frac{dG}{dQ}\notin L_2(Q)$, is $\frac{dG}{dQ}(\omega)=\omega^{-1/2}/2$ for $\omega\in\Omega$ and zero otherwise. For this specification of $\frac{dG}{dQ}$, the projection $E_{Q}\left[\frac{dG}{dQ}\mid\mathcal{C} \right]$ is not uniquely defined. To see why, note that it is the solution of the following least-squares problem $\min_{h\in\mathcal{C}}\left\|\frac{dG}{dQ}-h\right\|^{2}_{L_2(Q)}=\min_{h\in\mathcal{C}}\left(\int_{\Omega}h^{2}\,dQ-2\int_{\Omega}\left(\frac{dG}{dQ}\,h\right)\,dQ\right)+\int_{\Omega}\left(\frac{dG}{dQ}\right)^2\,dQ$,
where the objective function equals $\infty$ for each $h\in\mathcal{C}$, since $\int_{\Omega}\left(\frac{dG}{dQ}\right)^2\,dQ=\frac{1}{4}\int_{0}^{1}\frac{1}{\omega}\,d\omega=\infty$. By contrast, the proposed approach does not require any additional structure on the constraint set~(\ref{eq - constraint set Example 1}) and $G$ to approximate the $I$-projection.

\par The proposed approximation scheme applies to the constraint set~(\ref{eq - constraint set Example 1}) and is valid under \emph{any} specification of the cumulative distribution function $G$ and $\overline{\gamma}\in(0,1)$. It is also straightforward to implement in practice. The set of moment functions in this example is given by $\mathcal{V}=\left\{G(\gamma)-1[\omega\leq \gamma]:\gamma\in\Gamma\right\}$, and lemma~\ref{lemma - Example SD 2} establishes that it satisfies Assumption~\ref{Assump - L_1 and total boundedness} and $\mathcal{V}=\overline{\mathcal{V}}$. Since $\mathcal{V}$ is also uniformly bounded, Assumption~\ref{Assump - Approx} trivially holds. Thus, by Theorem~\ref{Thm - Computation}, we can approximate the $I$-projection using the proposed approximation scheme. In doing so, we have to construct a suitable partition of $\mathcal{V}$ for given a level of accuracy as in~(\ref{eq - Riemann Sum Consruction}). For a given $\epsilon>0$, a finite collection of real numbers $0=\gamma_0<\gamma_1<\gamma_2<\cdots<\gamma_n=\overline{\gamma}$ can be found so that $\gamma_j-\gamma_{j-1}\leq \epsilon/2$ and $G(\gamma_j)- G(\gamma_{j-1})\leq \epsilon/2$, and for all $1\leq j\leq n$. This can always be done in such a way that $n\leq 4+ 2/\epsilon$. With such a construction, we have the following finite partition of $\mathcal{V}$ of size $n$ that fits into our framework: $\mathcal{V}=\cup_{j=1}^n E_{j}$, where $E_{1} =\left\{G(\gamma)-1[\omega\leq \gamma]:\gamma\in[\gamma_0,\gamma_1]\right\},E_{2}=\left\{G(\gamma)-1[\omega\leq \gamma]:\gamma\in(\gamma_1,\gamma_2]\right\},\ldots,$ and \\ $ E_{n}=\left\{G(\gamma)-1[\omega\leq \gamma]:\gamma\in(\gamma_{n-1},\gamma_n]\right\}$.
\end{example}

\par The $I$-projection problem of Example~\ref{Example - SD} extends in a natural way to include more than one marginal distribution $G$ and different reference distributions $Q$. Section~4 of \citet{bhattacharya2006} considers this extension for the case of multiple marginal distributions where he also illustrates the dual form of his iterative approximation scheme. The next example compares our method with Bhattacharya's algorithm using this extension of Example~\ref{Example - SD}. It should be noted that Bhattacharya also imposes the $L_2(Q)$ structure to exploit results from~\citet{Bhatacharya-Dykstra} for executing his algorithm. Like with Example~\ref{Example - SD}, we demonstrate the broader applicability of our approximation scheme for moment inequality models.

\begin{example}\label{Example - Bhatacharya}
Let $Q=Q_{X_1,X_2}$ be a fixed bivariate probability distribution on $\Omega=\mathbb{R}^{2}.$ In this setting, $\omega=(x_1,x_2)$, and let $G_1$ and $G_2$ be given cumulative distribution function defined on $\mathbb{R}$.
 Consider the constraint set
\begin{align}\label{eq - constraint set Example 2}
\mathcal{M}=\left\{p\in L_{1}(Q):m(p)<\infty, \;\text{and}\;\int_{\Omega}1[x_i\leq\gamma]p\,dQ\leq G_i(\gamma)\;\forall\gamma\in\mathbb{R},\,i=1,2 \right\}.
\end{align}
Consequently, $\mathcal{V}=\mathcal{V}_1\cup\mathcal{V}_2$, where $\mathcal{V}_1=\left\{\left(G_1(\gamma)-1[x_1\leq \gamma]\right)\,1[x_2\in\mathbb{R}]:\gamma\in\mathbb{R}\right\}$ and \\
$\mathcal{V}_2=\left\{\left(G_2(\gamma)-1[x_2\leq \gamma]\right)\,1[x_1\in\mathbb{R}]:\gamma\in\mathbb{R}\right\}$. Firstly, it should be noted that this $I$-projection problem with constraint set $\mathcal{M}$ in~(\ref{eq - constraint set Example 2}) is \emph{not} covered by our setup. However, we can still establish existence of the $I$-projection. Then we discuss approximation of the $I$-projection using Theorem~\ref{Thm - Computation}, and contrast it with the dual form of the iterative algorithm put forward by~\citet{bhattacharya2006}.

\par It is a straightforward task to show that Parts (i) and (ii) of Assumption~\ref{Assumption - Fenchel}, hold, using steps similar to those in Example~\ref{Example - SD}. For brevity, we omit those details. Part (iii) of that assumption, like with $\overline{\gamma}=1$ in Example~\ref{Example - SD}, does not hold. However, one can establish results similar to those of Propositions S1.1 and S1.2 to show $\arginf\left\{\int_{\Omega}e^{y}\,dQ: y\in \mathcal{D}\right\}\neq\emptyset$, using steps identical to those in their proofs, depending on whether or not $\Delta=\left\{\gamma\in\mathbb{R}: \exists j\in\{1,2\},\, G_j(\gamma)<Q_{X_j}(-\infty,\gamma]\right\}$ is bounded. This approach is feasible since the moment functions satisfy $\sup_{\gamma\in\Gamma,\omega\in\Omega}|f_\gamma(\omega)|<\infty$, and we omit the details for brevity. Then existence of the $I$-projection of $Q$ on the constraint set~(\ref{eq - constraint set Example 2}) and its exponential family representation for its $Q$-density follows from steps identical to those in the proof of Theorem~\ref{Thm - Fenchel}. Lemma~\ref{lemma - Example Bhattacharya} establishes that $\mathcal{V}$ satisfies Assumption~\ref{Assump - L_1 and total boundedness} and that $\mathcal{V}=\overline{\mathcal{V}}$, holds. Since $\mathcal{V}$ is also uniformly bounded, Assumption~\ref{Assump - Approx} trivially holds. 

\par Thus, we can approximate the solution of this $I$-projection by constructing a suitable partition of $\mathcal{V}$. Given $\epsilon>0$, a finite collection of real numbers $-\infty=\gamma_0<\gamma_1<\gamma_2<\cdots<\gamma_n=\infty$ can be found so that $Q_{X_i}\left((\gamma_{j-1},\gamma_j)\right)\leq \epsilon/4$ and $G_i(\gamma_j)- G_i(\gamma_{j-1})\leq \epsilon/4$, for $i=1,2$ and for all $1\leq j\leq n$. This can always be done in such a way that $n\leq 6+ 4/\epsilon$. With such a construction, we have the following finite partition of $\mathcal{V}$ of size $2n$: $\mathcal{V}=\left(\cup_{j=1}^n E_{1,j}\right)\cup \left(\cup_{j=1}^n E_{2,j}\right)$, where 
\begin{align*}
E_{1,j} & =\left\{\left(G_1(\gamma)-1[x_1\leq \gamma]\right)\,1[x_2\in\mathbb{R}]:\gamma\in(\gamma_{j-1},\gamma_j]\right\}\quad\text{and}\\
E_{2,j} & =\left\{\left(G_2(\gamma)-1[x_2\leq \gamma]\right)\,1[x_1\in\mathbb{R}]:\gamma\in(\gamma_{j-1},\gamma_j]\right\}
\end{align*}
for $j=1,2,\ldots,n$. Therefore, one can approximate the $I$-projection using the proposed approximation scheme with the above partition of $\mathcal{V}$.

\par As the constraint set~(\ref{eq - constraint set Example 2}) has the representation $\mathcal{M}=\cap_{i=1}^{2}\mathcal{M}_i$, where
\begin{align*}
\mathcal{M}_i=\left\{p\in L_{1}(Q):m(p)<\infty, \;\text{and}\;\int_{\Omega}1[x_i\leq\gamma]p\,dQ\leq G_i(\gamma)\;\forall\gamma\in\mathbb{R}\right\},
\end{align*}
for $i=1,2$, Bhattacharya proposes to solve for the $I$-projection using the following dual problem:
\begin{align*}
\inf &\left\{\int_{\Omega}e^y\,dQ: y\in \mathcal{S}_1\oplus\mathcal{S}_2\right\}\;\;\text{where}\;\; y=y(u_1,u_2)=y_1(u_1)+y_2(u_2)\in\mathcal{S}_1\oplus\mathcal{S}_2\;\;\text{and}\\
\mathcal{S}_1 & =\left\{y(u_1,u_2):y(u_1,u_2)=y_1(u_1),\,y_1(u_1)\,\text{nondecreasing},\,\int_{\mathbb{R}} y_1(u_1)\,dG_1(u_1)=0\right\}, \\
\mathcal{S}_2 & =\left\{y(u_1,u_2):y(u_1,u_2)=y_2(u_2),\,y_2(u_2)\,\text{nondecreasing},\,\int_{\mathbb{R}} y_2(u_2)\,dG_2(u_2)=0\right\}.
\end{align*}
His cyclical descent algorithm for approximating the solution of this dual problem successively minimizes over each $y_i$ at a time while the remaining is held fixed. In particular, it follows these steps:
\begin{itemize}
\item Initialization: Set $y_{0,i}=0$ and begin with $n=1, i=1$.
\item Implementation:
\begin{enumerate}
\item Let $y_{n,1}$ denote the solution to $\inf\left\{\int_{\Omega}e^{y+y_{n-1,2}}\,dQ: y\in \mathcal{S}_1\right\}$.
\item Let $y_{n,2}$ denote the solution to $\inf\left\{\int_{\Omega}e^{y_{n,1}+y}\,dQ: y\in\mathcal{S}_2\right\}$.
\item Increase $n$ by 1 and repeat the two previous steps.
\end{enumerate}
\end{itemize}
To implement this algorithm, the practitioner must be able to obtain closed-form expressions for $y_{n,1}$ and $y_{n,2}$ at each iteration of the algorithm. Thus, to satisfy this requirement, Bhattacharya imposes $\mathcal{M}_1,\mathcal{M}_2\subset L_2(Q)$ and $\frac{dG_1}{dQ_{X_1}},\frac{dG_2}{dQ_{X_2}}\in L_2(Q)$. Like in Example~\ref{Example - SD}, this additional structure creates a way forward through the use of Theorem~3.3 in~\citet{Bhatacharya-Dykstra} to obtain closed-form expressions for the dual variables at each iteration. By contrast, our approach does not impose any additional structure to approximate this $I$-projection, and hence, is more widely applicable.
\end{example}

\subsection{Verification of Assumptions}
\par The marriage of Assumptions~\ref{Assumption - Fenchel} and~\ref{Assump - L_1 and total boundedness} are at the heart of our results. In practice, the PD $Q$ is given, so that verification of Assumption~\ref{Assumption - Fenchel} can be easily carried out by direct calculation, as demonstrated in Examples~\ref{Example - SD} and~\ref{Example - Bhatacharya}. Additionally, the set $\mathcal{V}$ is also given in practice, and we have verified that it satisfies Assumption~\ref{Assump - L_1 and total boundedness} in those examples by directly calculating the $L_1(Q)$ bracketing number of $\mathcal{V}$. The calculations of other measures of complexity can also be used to verify Assumption~\ref{Assump - L_1 and total boundedness} in practice, such as the $L_1(Q)$ covering, or packing numbers. Such calculations are heavily used in empirical process theory to characterize classes of (moment) functions that are Glivenko-Cantelli and/or Donsker, but in the $L_{r}(Q)$ norm for $r\geq1$. By the result of Lemma~\ref{Lemma - precompactness}, such sets of moment functions that are precompact in $L_{r}(Q)$ with $r>1$, are necessarily precompact in $L_1(Q)$. Thus, the results also cover precompact subsets of $L_r(Q)$ for any $r>1$. See Section~2.7 of~\citet{VDV-W} for some examples of precompact classes in $L_r(Q)$ spaces.

\par In applications, the $L_1(Q)$-closure of $\mathcal{V}$ (i.e., $\overline{\mathcal{V}}$) must be derived to implement the proposed approximation scheme. This is not a difficult task, as demonstrated by the calculations in Examples~\ref{Example - SD} and~\ref{Example - Bhatacharya}. The technique used in Lemmas~\ref{lemma - Example SD 2} and~\ref{lemma - Example Bhattacharya} generalizes to other classes of moment functions. The main tool for deriving $\overline{\mathcal{V}}$ from $\mathcal{V}$ is as follows. By considering a $L_1(Q)$ limit point of $\mathcal{V}$, given by $y$, there is a sequence $\{y_n\}_{n\geq1}\subset \mathcal{V}$ such that $y_n\stackrel{L_1(Q)}{\longrightarrow} y$, where the objective is to derive the form of $y$. As $y_n \stackrel{L_1(Q)}{\longrightarrow} y\implies y_n\stackrel{Q}{\longrightarrow} y$, there exists $\{n_i\}_{i\geq1}\subset \{n\}$ non-random and such that $\lim_{i\rightarrow\infty} y_{n_i}(\omega)=y(\omega)\;\text{a.s.-}Q$. This a.s.$-Q$ convergence can be used to deduce the form of the limit point $y$ based on the functional forms of elements in $\mathcal{V}$. In the examples described in the Supplementary Material and the next section, we find that the limit point satisfies $y\in\mathcal{V}$, implying that $\mathcal{V}=\overline{\mathcal{V}}$, holds. Once one obtains $\overline{\mathcal{V}}$, constructing a finite partition $\{E_i\}_{i=1}^{n}$ of it for a given $\epsilon>0$, such that~(\ref{eq - Riemann Sum Consruction}) holds, is also a simple task. The electronic companion illustrates the verification of Assumption~\ref{Assump - L_1 and total boundedness}, the construction of $\overline{\mathcal{V}}$ from $\mathcal{V}$, and the construction of the finite partition $\{E_i\}_{i=1}^{n}$ of $\overline{\mathcal{V}}$, in the context of unconditional and conditional stochastic dominance constraints.

\section{Random Interval Example}\label{Section - Examples}
\par This section illustrates the verification of Assumption~\ref{Assump - L_1 and total boundedness} and the construction of the partition of $\overline{\mathcal{V}}$ using the example on censored data in process monitoring discussed in Section~\ref{Section - OR Example}. Furthermore, we report the results of numerical experiments based on this example. In the Supplementary Material, we provide further examples based on first-order conditional and unconditional stochastic dominance constraints. It is challenging (if not impossible) to obtain a closed-form expression of the objective function in~(\ref{eq - finite program}) for a general reference PD $Q$ and set of moment functions $\{f_\gamma :\gamma\in\Gamma\}$. Whence, we solve the finite programs~(\ref{eq - finite program}) using the \emph{Sample Average Approximation} (SAA); see~\citet{book-SAA} for a survey of this method. The SAA principle is very general, and solves optimization problems tractably using Monte Carlo. In our setup, the objective function in~(\ref{eq - finite program}) is an expected value of a known function with respect to the PD $Q$. Now, since we can draw random samples from $Q$, the SAA approach entails replacing that objective function with its sample-analogue version based on a random sample from $Q$, and then to minimize it instead of the original objective function. Notably, it is not computationally costly to calculate this sample-analogue version of the objective function. Naturally, the quality of the SAA solution (e.g., in terms of convergence rates) would depend on specifications of $Q$ and $\{f_\gamma :\gamma\in\Gamma\}$, and the size of the random sample, among other parameters. A formal analysis of convergence and statistical validation of using SAA within our approximation scheme goes beyond the intended scope of the paper, and we leave it for future research. In this manuscript we only illustrate its use within our approximation scheme.
\par We have used MATLAB to implement the numerical experiments. The code employs the parallel computing capabilities of fmincon based on the sequential quadratic programming algorithm and includes gradient evaluation in the objective function for faster or more reliable computations. For high levels of $\epsilon$, we have implemented the approximation on a desktop computer with 10 cores and 30 gigabytes of RAM. For low levels of $\epsilon$, we have implemented the approximation scheme on a cluster using 12 cores with 100 gigabytes of RAM.

\subsection{$I$-Projection Problem}\label{SubSubsection - Random Set example}
\par Consider the ``population'' version of the $I$-projection problem~(\ref{eq - KL Min Problem-1}), given by
\begin{equation}\label{eq - KL Min Problem-2}
\begin{aligned}
\text{minimize}& \quad K(p,p^*)=\begin{cases} \int_{\Omega}p\log(p)\,dP^* &\mbox{if } p\geq0,\quad \int_{\Omega}p\,dP^*=1\\
\infty &\mbox{elsewhere},\end{cases} \\
\text{subject to}&\quad\int_{\Omega}\left(1\left[\omega\in C\right]-\sigma\left(C\right)\right)\, p\,dP^*\leq 0\;\forall C\in \mathcal{C}(\Omega),\quad p\in L_1(P^*),
\end{aligned}
\end{equation}
where have replaced $\hat{\sigma}$ in~(\ref{eq - KL Min Problem-1}) by its population counterpart, $\sigma$. In terms of the notation of Section~\ref{Section- Main Results}, the reference distribution is $Q=P^*$, the objective function is $m(p)=K(p,p^*)$, the moment functions are $f_{\gamma}(\omega)=1\left[\omega\in C\right]-\sigma\left(C\right),\, C\in \mathcal{C}(\Omega)$ with $\gamma=C$ and $\Gamma=\mathcal{C}(\Omega)$. Furthermore,
\begin{align}\label{eq - Example Random set moment function}
\mathcal{V}=\left\{\sigma\left(C\right)-1\left[\omega\in C\right]:C\in \mathcal{C}(\Omega)\right\}.
\end{align}

\par For simplicity, we specify $\Omega=[0,1]$, and the endpoints of the random interval $Y=[\underline{Y},\overline{Y}]$, given by the vector $(\underline{Y},\overline{Y})$, are independent with marginal distributions $\underline{Y}\sim U[0,1/2]$ and $\overline{Y}\sim U[1/2,1]$. The capacity functional of $Y$ under this specification is given by
\begin{align}\label{eq - Capacity fun Exp}
\sigma\left([a,b]\right)= \begin{cases}2a+\min\{2(b-a),1-2a\} &\mbox{if } a<1/2\\
2a &\mbox{if } a\geq1/2,\end{cases}
\end{align}
for every $[a,b]\subset \Omega$.

\par The $I$-projection problem~(\ref{eq - KL Min Problem-2}) closely resembles the one in Example~\ref{Example - SD} of Section~\ref{Subsection - Approx I projection}, where the main difference is the nature of the index set $\Gamma$. Another difference is that the CDF $G$ in Example~\ref{Example - SD} has been replaced by the capacity functional $\sigma$ in~(\ref{eq - Capacity fun Exp}). Like in Example~\ref{Example - SD}, this example satisfies all the conditions of Assumption~\ref{Assumption - Fenchel} except for Condition (iii), as $Q\left(\omega\in\Omega: f_{\gamma}(\omega)<0\;\forall \gamma\in\Gamma\right)=0$; e.g., consider the choice $C=[0,\epsilon]$ with $\epsilon<1/2$, which results in $\left\{\omega\in\Omega: f_{C}(\omega)<0\right\}=\emptyset$. Despite this violation, we can establish $\arginf\left\{\int_{\Omega}e^{y}\,dQ: y\in \mathcal{D}\right\}\neq\emptyset$, holds. The approach depends on whether or not 
\begin{align}\label{eq - Conditions Delta RS}
0<\inf\left\{\inf C:C\in\Delta\right\}\quad\text{and}\quad\sup\left\{\sup C:C\in\Delta\right\}<1,
\end{align}
hold, where $\Delta=\left\{C\in \mathcal{C}(\Omega): \sigma\left(C\right)<Q\left(C\right)\right\}$. If the conditions in~(\ref{eq - Conditions Delta RS}), hold, then the sets in $\Delta$ are bounded away from the endpoints of the interval $[0,1]$. In consequence, we can obtain norm-coercivity of the map $y\mapsto\int_{\Omega}e^{y}$ on $\mathcal{D}$ in this example using steps identical to those in the proof of Proposition~\ref{Prop - Example 1 Delta Interior}. Now if the conditions in~(\ref{eq - Conditions Delta RS}) do not hold, then the desired coercivity no longer holds. Hence, to move forward we can consider a sequence of problems
\begin{align*}
\inf\left\{\int_{\Omega}e^{y}\,dQ:y\in\mathcal{D}_n\right\}\quad\text{with}\quad\mathcal{D}_n=\left\{y\in\mathcal{D}:\alpha\leq \bar{\alpha}_n\right\}
\end{align*} 
where $\bar{\alpha}_n\nearrow\infty$ as $n\rightarrow\infty$, and establish $\arginf\left\{\int_{\Omega}e^{y}\,dQ: y\in \mathcal{D}\right\}\neq\emptyset$ using the theory of Gamma-Convergence. Because the moment functions satisfy $\sup_{\gamma\in\Gamma,\omega\in\Omega}|f_\gamma(\omega)|<\infty$, by following steps identical to those in the proofs of Proposition~\ref{Prop - Existence Example 1} and Lemmas~\ref{Lemma - Example 1} -~\ref{Lemma - Existence Primal problem} in the Supplementary Material, we can obtain the desired result. We omit these details for brevity. Finally, the existence of the minimizer in the $I$-projection problem~(\ref{eq - KL Min Problem-2}) and its exponential family representation for its $Q$-density follows from steps identical to those in the proof of Theorem~\ref{Thm - Fenchel}. Assumption~\ref{Assump - Approx} trivially holds since the moment functions are uniformly bounded under the aforementioned specifications. For brevity, we omit these technical details.

\par We note that the capacity functional $\sigma$ in~(\ref{eq - Capacity fun Exp}) is Lipschitz continuous:
\begin{align}\label{eq - V RS 0}
\left|\sigma\left([a_1,b_1]\right)-\sigma\left([a_2,b_2]\right)\right|\leq 2 \,d_{H}\left([a_1,b_1],[a_2,b_2]\right)\quad \forall [a_1,b_1],[a_2,b_2]\in\mathcal{C}(\Omega),
\end{align}
where $d_{H}\left([a_1,b_1],[a_2,b_2]\right)=\max\{|a_1-a_2|,|b_1-b_2|\}$ is the Hausdorff distance between the intervals $[a_1,b_1]$ and $[a_2,b_2]$, which follows from the form of $\sigma$. Furthermore, by the triangular inequality we must have that
\begin{align}\label{eq - V RS 1}
\int_{\Omega}|f_{C}-f_{C^\prime}|\,dQ \leq \left|\sigma(C)-\sigma(C^\prime)\right|+Q\left(C\ominus C^\prime\right),
\end{align}
holds for any $C$ and $C^\prime$ in $\mathcal{C}(\Omega)$, where $\ominus$ denotes the symmetric difference operation on sets. These inequalities are useful for establishing the following result on the set $\mathcal{V}$ in~(\ref{eq - Example Random set moment function}).
\begin{proposition}\label{Prop - V Random Set}
Let $\mathcal{V}$ and $\sigma$ be given by~(\ref{eq - Example Random set moment function}) and (\ref{eq - Capacity fun Exp}), respectively. Then $\mathcal{V}=\overline{\mathcal{V}}$, holds, and $\mathcal{V}$ satisfies Assumption~\ref{Assump - L_1 and total boundedness}.
\end{proposition}
\begin{proof}
See Appendix~\ref{Appendix - proof RS}. 
\end{proof}
\noindent Thus, we can approximate the solution of the $I$-projection using its dual via Theorem~\ref{Thm - Computation} by constructing a suitable partition of $\mathcal{V}$. The two inequalities~(\ref{eq - V RS 0}) and~(\ref{eq - V RS 1}) present a simple avenue forward in the construction. Given $\epsilon>0$, we can obtain a partition of the unit interval $[0,1]$ given by $0=d_0<d_1<d_2<\cdots<d_m=1$ such that $d_{i}-d_{i-1},Q([d_{i-1},d_{i}])<\epsilon/4$ for $i=0,1,\ldots,m$. Now let
$D_1=[0,d_1]$, and $D_i=(d_{i-1},d_i]$ $\forall i=2,3\ldots m$. Consider the collection $\mathcal{E}$ consisting of the following sets:
\begin{align*}
E^{1}_1 &=\left\{C\in \mathcal{C}(\Omega):C\subset D_1\right\}, E^{1}_i=\left\{C\in \mathcal{C}(\Omega):C\subset D_i\right\}\;\forall i=2,3\ldots m;\\
E^{2}_1 &=\left\{C\in \mathcal{C}(\Omega):C\cap D_1\neq \emptyset,C\cap D_2\neq \emptyset\right\},\,\text{and}\\
E^{2}_i &=\left\{C\in \mathcal{C}(\Omega):C\cap D_i\neq \emptyset,C\cap D_{i+1}\neq \emptyset\right\}\;\forall i=2,3\ldots m-1;\\
E^{3}_1 &=\left\{C\in \mathcal{C}(\Omega):C\cap D_1\neq \emptyset,C\cap D_2\neq \emptyset,C\cap D_3\neq \emptyset\right\},\,\text{and}\\
E^{3}_i &=\left\{C\in \mathcal{C}(\Omega):C\cap D_i\neq \emptyset,C\cap D_{i+1}\neq \emptyset,C\cap D_{i+2}\neq \emptyset\right\}\;\forall i=2,3\ldots m-2;\\
\vdots \\
E^{m}_1 &=\left\{C\in \mathcal{C}(\Omega):C\subset C\cap D_{i}\neq \emptyset\; \forall i=1,2,\ldots m \right\}.
\end{align*}
These sets form a finite partition of $\mathcal{C}(\Omega)$ of size $n=(m^2-m)/2+1$, and for each $E\in \mathcal{E}$
\begin{align*}
\int_{\Omega}|f_{C}-f_{C^\prime}|\,dQ \leq \epsilon\quad \forall C,C^\prime\in E,
\end{align*}
holds. Thus, the collection $\left\{\left\{-f_{\gamma}\in\mathcal{V}:\gamma\in E\right\},\;E\in\mathcal{E}\right\}$ forms a finite partition of $\mathcal{V}$ to which Theorem~\ref{Thm - Computation} applies.

\subsection{Numerical Experiments}
\par Now we illustrate the proposed approximation scheme numerically with the above example for values $\epsilon\in\{1,0.5,0.25,0.125,0.0833\}$ and $Q$ having a Kumaraswamy distribution. Recall that this distribution has CDF $1-(1-x^{a})^{b}$ for $x\in[0,1]$, where $a,b>0$. In the numerical example we set $a=1$ and $b=3$, yielding $Q\not\in\mathcal{M}$. This non-inclusion arises because $\sigma\left(C\right)< Q(C)$ for all $C\in C(\Omega)$ such that $C=[0,\ell]$ and $\ell<\frac{3-\sqrt{5}}{2}\approx 0.382$. Consequently, the $I$-projection yields a $Q$-density closest to this Kumaraswamy distribution, in the Kullback-Leibler sense, that is selectionable with respect to the capacity functional $\sigma$ of the random set $Y$. Since we do not have a closed-form expression for the objective function in~(\ref{eq - finite program}), we use the SAA to approximate it with $10^4$ draws from $Q$.

\begin{table}[pt]
\centering
\bigskip
\resizebox{10cm}{!}{
\begin{tabular}{c*{7}{c}}
\toprule
\multicolumn{1}{c}{$\epsilon$} &  \multicolumn{2}{c}{$n$}  & \multicolumn{2}{c}{Optimal Value~(\ref{eq - finite program})} & \multicolumn{2}{c}{Time}   \\
\cmidrule(lr){1-1}\cmidrule(lr){2-3}\cmidrule(lr){4-5}\cmidrule(lr){6-7} \\
 & $a$ & $b$ & $a$ & $b$ & $a$ & $b$\\
 \cmidrule(lr){2-3}\cmidrule(lr){4-5}\cmidrule(lr){6-7} \\
1 & 66 & 10 & 0.9871 & 0.9871 & 0.14s & 0.025s \\

0.5 & 276 &  36 & 0.9916 & 0.9916 & 1.3s & 0.152s  \\

0.25 & 1128 & 153 & 0.9897 & 0.9897 & 22.9s & 0.868s  \\

0.125 & $4560^*$ &  666 & $0.9886^*$ & 0.9886 & 86m$^*$ & 9.99s  \\

0.0833 & $10296^*$ & 1485 & $0.9853^*$ & 0.9853 & 27.3h$^*$ & 62.7s  \\

0.0625* & - & 2628 & - & 0.9842 & - & 486s \\

0.0313* & - & 10731 & - & 0.9826 & - & 35.6h \\
\hline
\end{tabular}
}
\caption{Output of approximation schemes based on $\mathcal{C}([0,1])$ and $\mathcal{C}\left(\left[0,\frac{3-\sqrt{5}}{2}\right]\right)$ in columns $a$ and $b$, respectively. Notation: "*" denotes implementation on a cluster.}\label{tab03}
\end{table}

\par For each $\epsilon$, we specified an equidistant grid $0=d_0<d_1<d_2<\cdots<d_m=1$ such that $d_{i}-d_{i-1},Q([d_{i-1},d_{i}])<\epsilon/4$ for $i=0,1,\ldots,m$. Letting $c_i=(d_{i}+d_{i-1})/2$ for $i=1,\ldots,m$, we implemented the approximation scheme using the following intervals:
\begin{align*}
D_i & \in E^1_i\quad \forall i=1\ldots,m;\, [c_i,c_{i+1}] \in E^2_i\quad \forall i=1\ldots,m-1;\\
[c_i,c_{i+2}] & \in E^3_i\quad \forall i=1\ldots,m-2;\ldots;\,\text{and}\;[c_1,c_{m}] \in E^{m}_1.
\end{align*}

\begin{figure}[pt]
    \centering
    \resizebox{15cm}{!}{
    \includegraphics[width=1\textwidth]{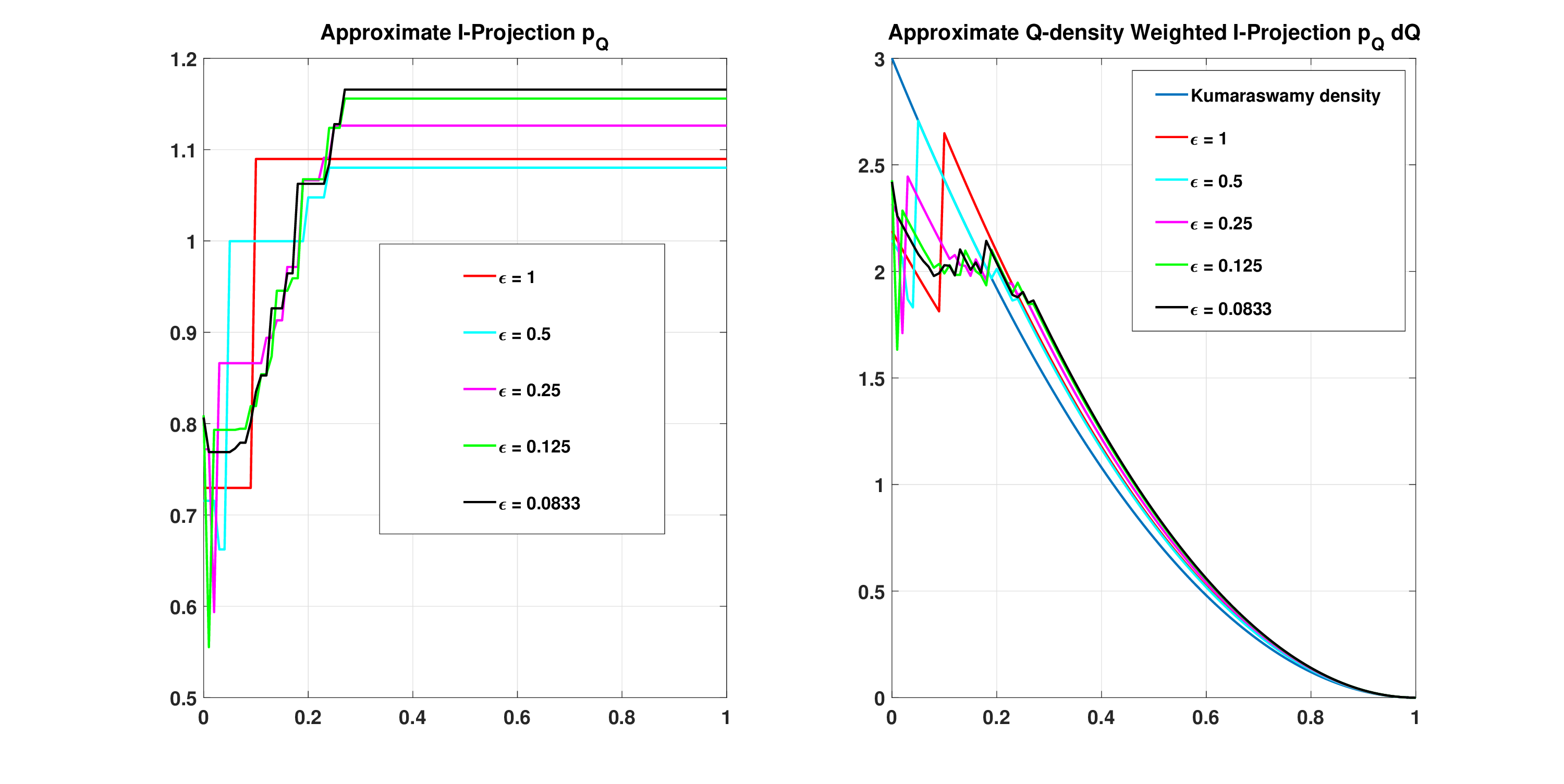}
}
    \caption{Graphs of the approximate $I$-projection, $p_Q$, and its $Q$-density weighted version, $p_Q\,dQ$.}
    \label{fig: Random Set}
\end{figure}

\noindent The $a$ columns in Table~\ref{tab03} reports the values of $\epsilon$ we have considered and their impact on $n$, the optimal minimal value of the approximating finite programs~(\ref{eq - finite program}), and the execution times of the numerical procedure. Figure~\ref{fig: Random Set} reports the approximate $I$-projections and their $Q$-density weighted versions.

\par The rapid growth in $n$ for lower levels of $\epsilon$ naturally leads to computational challenges by increasing the number of choice variables in the approximating finite program. Notice in the $a$ columns of Table~\ref{tab03} with $\epsilon = 0.0833$, the finite program had 10297 choice variables and required 27.3 hours to complete. To mitigate this computational challenge, the result in Part 2 of Theorem~\ref{Thm - Fenchel} can serve as a guide towards that end. Recall that this result establishes the moment functions corresponding to the binding inequalities, i.e. the set $\overline{B}$, are the only ones that enter the $I$-projection's representation. Consequently, having information on the set $\overline{B}$ can help reduce the computational burden with lower levels of $\epsilon$. The result of Lemma~\ref{Lemma - Inequality} implies $\overline{B}\subset\{v\in\mathcal{V}: \int_{\Omega}v\,dQ\geq0\}$, and one implements this information in the approximation scheme by constructing a suitable partition of $\{v\in\mathcal{V}: \int_{\Omega}v\,dQ\geq0\}$ instead of $\mathcal{V}$. We demonstrate this point by repeating the above numerical experiment with the identical setup except that now for each $\epsilon$, we have specified an equidistant grid $0=d_0<d_1<d_2<\cdots<d_m=\frac{3-\sqrt{5}}{2}$ as the interval $[0,(3-\sqrt{5})/2]$ because various subsets of it in $C(\Omega)$ satisfy $\sigma(C)\leq Q(C)$. The output correspond to the $b$ columns in Table~\ref{tab03}.

\begin{figure}[pt]
    \centering
    \resizebox{15cm}{!}{
    \includegraphics[width=1\textwidth]{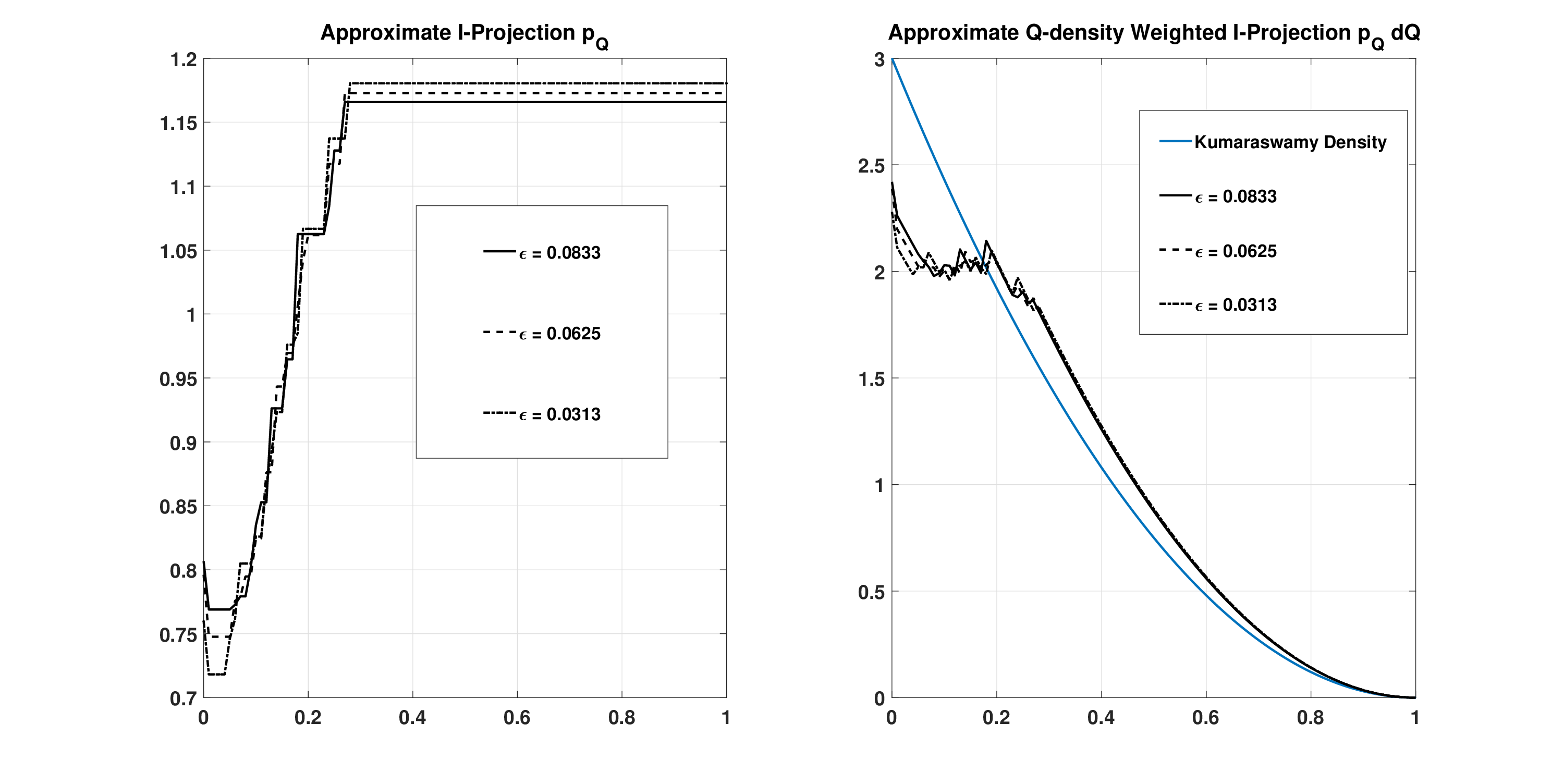}
    }
    \caption{Graphs of the approximate $I$-projection, $p_Q$, and its $Q$-density weighted version, $p_Q\,dQ$ based on the modified approximation scheme.}
    \label{fig: Random Set modified}
\end{figure}

\par The numerical results are very encouraging: we obtain results identical to those in the $a$ columns of Table~\ref{tab03} and Figure~\ref{fig: Random Set}, except that now $n$ grows less rapidly, speeding up the numerical optimization procedure with a lower number of choice variables. The $b$ columns in Table~\ref{tab03} report the results of this numerical experiment where we have also considered lower values of $\epsilon$ given by $\{0.0625,0.0313\}$ because of the improved rapid computation. Figure~\ref{fig: Random Set modified} reports the graphs of the $I$-projection and its weighted version under this modification of the proposed approximation scheme, but for $\epsilon\in\{0.0833,0.0625,0.0313\}$ to avoid visual clutter, as the graphs for higher values of $\epsilon$ are identical to those in Figure~\ref{fig: Random Set}. Focusing on the output for $\epsilon=0.0833$, observe that $n=1485$, in contrast to $n=10296$ under the approximation scheme in the first set of experiments. Furthermore, in this case, the execution time is about 1 minute, which is a substantial improvement over 27.3 hours of execution time under the approximation scheme that ignores the additional information. With this gain in computational speed, we also computed the approximate $I$-projection for $\epsilon\in\{0.0625,0.0313\}$, which also took about 8 minutes and 36 hours to complete, respectively. Of course, for even lower levels of $\epsilon$, one can expect the resulting growth in $n$ to create computational challenges for this modification of the proposed approximation scheme. It is possible to further improve the modified approximation scheme by using fewer choice variables; however, developing this approach and its generalization goes beyond the intended scope of this paper and is left for future research.

\appendix

\section{Intermediate Technical Results}\label{Appendix - Technical Lemmas}

\begin{lemma}\label{Lemma - KMT and MT}
Suppose that Assumption~\ref{Assump - L_1 and total boundedness} holds. Then the following hold:\\ $\text{ex}\left(\overline{\text{co}}(\overline{\mathcal{V}})\right)\neq\emptyset$, $\overline{\text{co}}\left(\text{ex}\left(\overline{\text{co}}(\overline{\mathcal{V}})\right)\right)=\overline{\text{co}}(\overline{\mathcal{V}})$, and
$\text{ex}\left(\overline{\text{co}}(\overline{\mathcal{V}})\right)\subset\overline{\mathcal{V}}$.
\end{lemma}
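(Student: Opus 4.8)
The plan is to read off all three conclusions from the Krein--Milman theorem and its partial converse (Milman's theorem), once it is established that $\overline{\text{co}}(\overline{\mathcal{V}})$ is a nonempty norm-compact convex subset of $L_1(Q)$. First I would note that $\mathcal{V}\neq\emptyset$ (a moment inequality model presupposes $\Gamma\neq\emptyset$), hence $\overline{\mathcal{V}}\neq\emptyset$; and since $L_1(Q)$ is a Banach space and $\mathcal{V}$ is precompact in its norm by Assumption~\ref{Assump - L_1 and total boundedness}, its closure $\overline{\mathcal{V}}$ is norm-compact. Then I would invoke Mazur's theorem (e.g.,~\citealp{Rudin-Book}, Theorem~3.20, and~\citealp{Bogachev}), which states that the norm-closed convex hull of a compact subset of a Banach space is itself compact; consequently $\overline{\text{co}}(\overline{\mathcal{V}})$ is a nonempty compact convex subset of $L_1(Q)$.

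The first two conclusions then follow immediately from the Krein--Milman theorem (e.g.,~\citealp{Rudin-Book}, Theorem~3.23): every nonempty compact convex subset $K$ of a locally convex space satisfies $\text{ex}(K)\neq\emptyset$ and $K=\overline{\text{co}}\left(\text{ex}(K)\right)$. Applying this with $K=\overline{\text{co}}(\overline{\mathcal{V}})$ gives $\text{ex}\left(\overline{\text{co}}(\overline{\mathcal{V}})\right)\neq\emptyset$ and $\overline{\text{co}}\left(\text{ex}\left(\overline{\text{co}}(\overline{\mathcal{V}})\right)\right)=\overline{\text{co}}(\overline{\mathcal{V}})$. For the third conclusion I would use Milman's partial converse to the Krein--Milman theorem (e.g.,~\citealp{Rudin-Book}, Theorem~3.25): if a compact convex set $K$ equals $\overline{\text{co}}(A)$, then $\text{ex}(K)\subset\overline{A}$. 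Taking $A=\overline{\mathcal{V}}$, which is already norm-closed, yields $\text{ex}\left(\overline{\text{co}}(\overline{\mathcal{V}})\right)\subset\overline{\mathcal{V}}$, as claimed.

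There is no genuine obstacle in this argument; it is a routine application of classical convexity theory. The only points requiring care are the verification of the hypotheses of the cited results: namely that $\overline{\mathcal{V}}$ is truly compact, which rests on the completeness of $L_1(Q)$ together with the precompactness supplied by Assumption~\ref{Assump - L_1 and total boundedness}, and that compactness is transferred to $\overline{\text{co}}(\overline{\mathcal{V}})$ via Mazur's theorem. Once these are in place, Krein--Milman and Milman's theorem apply verbatim, and the three assertions of the lemma follow.
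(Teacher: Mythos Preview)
Your proposal is correct and follows essentially the same approach as the paper: both arguments establish norm-compactness of $\overline{\text{co}}(\overline{\mathcal{V}})$ via Theorem~3.20(c) of~\cite{Rudin-Book} (which you call Mazur's theorem), then invoke Krein--Milman (Theorem~3.23) for the first two conclusions and Milman's theorem (Theorem~3.25) for the third. The only minor addition in your write-up is the explicit check that $\mathcal{V}\neq\emptyset$, which the paper leaves implicit.
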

\begin{proof}
The proof proceeds by the direct method. Assumption~\ref{Assump - L_1 and total boundedness}, the set $\overline{\mathcal{V}}$ is compact in the $L_1(Q)$-norm (as it is complete and totally bounded). Now, since $L_1(Q)$ is a Banach space, it is therefore a Fr\'{e}chet space. This fact allows us to apply part (c) of Theorem~3.20 in~\citet{Rudin-Book} to the set $\overline{\mathcal{V}}$  to deduce that $\overline{\text{co}}(\overline{\mathcal{V}})$ is also compact in the same norm. Whence, we can apply the Krein-Milman Theorem (e.g., Theorem~3.23 in~\citet{Rudin-Book}) to the set $\overline{\text{co}}(\overline{\mathcal{V}})$ to deduce that its set of extreme points, $\text{ex}\left(\overline{\text{co}}(\overline{\mathcal{V}})\right)$, is nonempty, and that
$\overline{\text{co}}\left(\text{ex}\left(\overline{\text{co}}(\overline{\mathcal{V}})\right)\right)=\overline{\text{co}}(\overline{\mathcal{V}})$.
 Next, we can apply Milman's Theorem (e.g., Theorem~3.25 in~\citet{Rudin-Book}) to establish $\text{ex}\left(\overline{\text{co}}(\overline{\mathcal{V}})\right)\subset\overline{\mathcal{V}}$.
\end{proof}

\begin{lemma}\label{Lemma - Riemann Sum}
Suppose the conditions of Theorem~\ref{Thm - existence reparametrization} hold. Given $\epsilon>0$, let $U=\left\{y\in L_1(Q): \|y\|_{L_1(Q)}\leq\epsilon\right\}$. Then, there corresponds a finite partition $\{E_i\}_{i=1}^{n}$ of $\overline{\mathcal{V}}$ such that the approximation~(\ref{eq - Riemann Sum Consruction}) holds.
\end{lemma}
\begin{proof}
The proof proceeds by the direct method. The proof we present is \emph{our} solution to Exercise 23 of~\citet{Rudin-Book}, but in the context of this paper's framework.

\par First note that the neighborhood $U$ of 0 in $L_1(Q)$ is closed, balanced, and convex. The polar of $U$ is defined as $K=\left\{\Lambda\in L_1(Q)^*: |\Lambda(y)|\leq 1\,\forall y\in U\right\}$. We claim that
\begin{align}\label{eq - lemma proof Rudin 0}
U=\left\{y\in L_1(Q): |\Lambda(y)|\leq 1\,\forall \Lambda\in L_1(Q)^*\right\}.
\end{align}
It is clear that $U$ is a subset of the right side of~(\ref{eq - lemma proof Rudin 0}) which is closed. Suppose that $y\in L_1(Q)$ but $y_0\notin U$. Theorem~3.7 of~\citet{Rudin-Book} (with $U$ and $L_1(Q)$ in place of $B$ and $X$, respectively) then shows that $\Lambda(y_0)>1$ for some $\Lambda\in L_1(Q)^*$. This establishes the equality~(\ref{eq - lemma proof Rudin 0}). Consequently, to show that $z\in L_1(Q)$ is an element of $U$, we need to establish that
$|\Lambda(z)|\leq 1$ $\forall \Lambda\in L_1(Q)^*$, holds.

\par Next, construct the partitioning sets $E_i$ so that $v-v^\prime\in U$ whenever $v$ and $v^\prime$ lie the same $E_i$. Since $\overline{\mathcal{V}}$ is compact in $L_1(Q)$, there is a finite number of such sets, $n\in\mathbb{Z}_+$. Now we will show $\int_{\overline{\mathcal{V}}}v\,d\mu(v)-\sum_{i=1}^{n}\mu(E_i)v_i\in U$ $\forall v_i\in E_i$ and $i=1,\ldots,n$. Select any $v_i\in E_i$ for each $i=1,\ldots, n$, and define $z=\int_{\overline{\mathcal{V}}}v\,d\mu(v)-\sum_{i=1}^{n}\mu(E_i)v_i$. Then, for any $\Lambda\in L_1(Q)^*$,
\begin{align*}
\Lambda(z) & =\Lambda\left(\int_{\overline{\mathcal{V}}}v\,d\mu(v)\right)-\Lambda\left(\sum_{i=1}^{n}\mu(E_i)v_i\right)\quad(\text{by linearity of $\Lambda$})\\
& = \Lambda\left(\int_{\overline{\mathcal{V}}}v\,d\mu(v)\right)-\sum_{i=1}^{n}\mu(E_i)\Lambda(v_i)\quad(\text{by linearity of $\Lambda$})\\
& = \int_{\overline{\mathcal{V}}}\Lambda(v)\,d\mu(v)-\sum_{i=1}^{n}\mu(E_i)\Lambda(v_i)\quad(\text{by Definition~\ref{Def - GF integral} of the weak integral})\\
& = \sum_{i=1}^{n}\left[ \int_{E_i}\Lambda(v)\,d\mu(v)-\mu(E_i)\Lambda(v_i)\right]\quad(\text{as $\{E_i\}_{i=1}^{n}$ is a partition of $\overline{\mathcal{V}}$}).
\end{align*}
Whence,
\begin{align*}
\left|\Lambda(z)\right| & \leq \sum_{i=1}^{n}\left| \int_{E_i}\Lambda(v)\,d\mu(v)-\mu(E_i)\Lambda(v_i)\right|\\
 & = \sum_{i=1}^{n}\left| \int_{E_i}\Lambda(v)\,d\mu(v)-\int_{E_i}\,d\mu(v)\Lambda(v_i)\right|\quad(\text{as $\int_{E_i}\,d\mu(v)=\mu(E_i)$}).\\
 & = \sum_{i=1}^{n}\left| \int_{E_i}\left(\Lambda(v)-\Lambda(v_i)\right)\,d\mu(v)\right|\quad(\text{by linearity of Lebesgue integral})\\
 & = \sum_{i=1}^{n}\left| \int_{E_i}\left(\Lambda(v-v_i)\right)\,d\mu(v)\right|\quad(\text{by linearity of $\Lambda$})\\
 &\leq \sum_{i=1}^{n}\int_{E_i}\left|\Lambda(v-v_i)\right|\,d\mu(v) \\
 & \leq 1,
\end{align*}
where the last inequality follows from $v-v_i\in E_i\implies v-v_i\in U$ for each $i$, and by the right side of~(\ref{eq - lemma proof Rudin 0}). As $\Lambda\in L_1(Q)^*$ in the above derivations was arbitrary, these derivations hold for all $\Lambda\in L_1(Q)^*$. Consequently, $|\Lambda(z)|\leq 1$ for all $\Lambda\in L_1(Q)^*$, which means that $z$ is an element of the right side of~(\ref{eq - lemma proof Rudin 0}), and hence, $z\in U$. Finally, the derivations for this $z$ are for an arbitrary choice of elements $v_i\in E_i$ for each $i$. Therefore, these derivations and deductions go through for any choice of the $v_i\in E_i$. This concludes the proof.
\end{proof}

\begin{lemma}\label{Lemma - Inequality}
Suppose the conditions of Theorem~\ref{Thm - existence reparametrization} hold. Then
$\frac{\int_{\Omega}ve^{y_0}\,dQ}{\int_{\Omega}e^{y_0}\,dQ}\geq \int_{\Omega}v\,dQ\quad \forall v\in\overline{B}$.
\end{lemma}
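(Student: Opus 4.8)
The plan is to convert the inequality into a statement about $Q$ via the exponential‑family representation, and then extract it from the optimality of the $I$‑projection. By Part~1 of Theorem~\ref{Thm - Fenchel}, under the maintained assumptions $p_Q=e^{y_0}\big/\int_{\Omega}e^{y_0}\,dQ$, so the left‑hand side equals $\int_{\Omega}v\,dP_Q$ and the claim becomes $\int_{\Omega}v\,dP_Q\ge\int_{\Omega}v\,dQ$ for every $v\in\overline{B}$. First I would record that $\int_{\Omega}v\,dP_Q=0$ for such $v$: for $v\in B$ this is~(\ref{eq - Binding moment fxns set}), and for an $L_1(Q)$‑limit point $v$ of a sequence $v_n\in B$ it passes to the limit since $\{v_n\}$ is uniformly integrable under $P_Q$ --- using Part~(ii) of Assumption~\ref{Assumption - Fenchel} together with $\int_{\Omega}|v_n|^{1+\delta}p_Q\,dQ\le K$, exactly the mechanism used in Lemma~\ref{Lemma - Obj} and in the proof of Part~2 of Theorem~\ref{Thm - Fenchel}. (If $Q\in\mathcal{M}$ then $P_Q=Q$, $y_0=0$, and both sides equal $\int_{\Omega}v\,dQ$, so we may assume $Q\notin\mathcal{M}$.) Hence it remains to prove $\int_{\Omega}v\,dQ\le0$ for every $v\in\overline{B}$.

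For this residual inequality I would argue from optimality, the crucial input being Part~(iii) of Assumption~\ref{Assumption - Fenchel}, which furnishes a set $A$ with $Q(A)>0$ on which every element of $\mathcal{V}$ is strictly positive; by compactness of $\overline{\mathcal{V}}$ (Assumption~\ref{Assump - L_1 and total boundedness}) the measure $Q_A$ obtained by conditioning $Q$ on $A$ then lies in $\mathcal{M}$ and satisfies $\int_{\Omega}w\,dQ_A\ge0$ for all $w\in\overline{\mathcal{V}}$, with a uniform positive slack on the binding directions. Assuming $\int_{\Omega}v\,dQ>0$ for some $v\in\overline{B}$, I would perturb $P_Q$ inside $\mathcal{M}$ --- mixing in a small amount of $Q$, compensated by just enough of $Q_A$ to keep every moment inequality satisfied --- and compute the first‑order behaviour of $I(\cdot\,||\,Q)$ along the perturbation; since that derivative only sees $\log p_Q=y_0-\text{const}$, I would evaluate it through $y_0\in\overline{\mathrm{span}_{+}(B)}$ (Part~2 of Theorem~\ref{Thm - Fenchel}) and conclude from optimality that it is nonnegative in every admissible direction. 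Pushing the direction toward the pure $Q$‑component, which lowers $I(\cdot\,||\,Q)$ toward $I(Q\,||\,Q)=0$ while moving $\int_{\Omega}v\,d(\cdot)$ toward the assumed positive value, should contradict optimality, forcing $\int_{\Omega}v\,dQ\le0$; together with $\int_{\Omega}v\,dP_Q=0$ this gives $\int_{\Omega}ve^{y_0}\,dQ\big/\int_{\Omega}e^{y_0}\,dQ\ge\int_{\Omega}v\,dQ$.

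The main obstacle is the feasibility accounting in the second step. The improving direction is the $Q$‑direction, but along it some binding moment $\int_{\Omega}w\,d(\cdot)$ with $w\in\overline{B}$ can fall below zero, so one must blend in precisely the right amount of the strictly interior direction $Q_A$; controlling that blend uniformly over $w\in\overline{\mathcal{V}}$ --- exploiting compactness of $\overline{\mathcal{V}}$ to get a positive lower bound for $\int_{\Omega}w\,dQ_A$, and the convex‑cone structure of $\mathcal{D}$ --- while still strictly decreasing the objective is where the real difficulty lies. Pinning down the admissible cone of perturbation directions and the sign of the associated directional derivative is the crux; the remainder is routine bookkeeping.
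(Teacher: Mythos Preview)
Your approach diverges substantially from the paper's. The paper rewrites the inequality as a covariance identity,
\[
\frac{\int_{\Omega}ve^{y_0}\,dQ}{\int_{\Omega}e^{y_0}\,dQ}-\int_{\Omega}v\,dQ=\frac{\text{COV}_{Q}\left(e^{y_0},v\right)}{\int_{\Omega}e^{y_0}\,dQ},
\]
and then argues that $\text{COV}_{Q}(e^{y_0},v)\geq0$ directly from $y_0\in\overline{\text{span}_{+}(B)}$ (Part~2 of Theorem~\ref{Thm - Fenchel}): since $y_0$ either does not involve $v$ at all or involves it linearly with a nonnegative coefficient, $e^{y_0}$ and $v$ are nonnegatively correlated under $Q$. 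No primal perturbation or feasibility argument is used.

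Your reduction to the two facts $\int_{\Omega}v\,dP_Q=0$ and $\int_{\Omega}v\,dQ\leq0$ is correct, and is in fact exactly the use to which the lemma is put in the main text. The gap lies in the contradiction argument for the second fact. As you set it up, the perturbation direction (toward $Q$, corrected by $Q_A$) and the resulting first-order decrease of $I(\cdot\,||\,Q)$ make no use of the hypothesis ``$\int_{\Omega}v\,dQ>0$ for this particular $v\in\overline{B}$'': the amount of $Q_A$-correction you need is dictated by the \emph{other} binding directions $w\in\overline{B}$ with $\int_{\Omega}w\,dQ<0$, not by $v$, and once that correction is priced in, the sign of the directional derivative is governed by the balance $\lambda\int_{\Omega}y_0\,dQ+(1-\lambda)\int_{\Omega}y_0\,dQ_A$ at the admissible $\lambda$. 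Nothing in your sketch ties that sign to the assumed positivity of $\int_{\Omega}v\,dQ$; the same calculation would run verbatim under the negation of the hypothesis, so no contradiction is produced. The feasibility accounting you flag as the obstacle is real, but even if it were carried out, the argument as written would either prove too much (that $P_Q$ is never optimal) or nothing at all.
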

\begin{proof}
The proof proceeds by the direct method. Let $v\in\overline{B}$. By definition of covariance,
$\int_{\Omega}ve^{y_0}\,dQ=\int_{\Omega}e^{y_0}\,dQ\,\int_{\Omega}v\,dQ+\text{COV}_{Q}\left(e^{y_0},v\right)$,
where $\text{COV}_{Q}\left(e^{y_0},v\right)$ is the covariance between $e^{y_0}$ and $v$ under $Q$. This equality is equivalent to
$\frac{\int_{\Omega}ve^{y_0}\,dQ}{\int_{\Omega}e^{y_0}\,dQ}=\int_{\Omega}v\,dQ+\frac{\text{COV}_{Q}\left(e^{y_0},v\right)}{\int_{\Omega}e^{y_0}\,dQ}$. Thus, the desired conclusion would arise if $\text{COV}_{Q}\left(e^{y_0},v\right)\geq0$, since $\int_{\Omega}e^{y_0}\,dQ>0$. From Theorem~\ref{Thm - Fenchel}, we have that $y_0\in\overline{\text{span}_{+}(B)}$, and since $\overline{B}\subset\overline{\text{span}_{+}(B)}$, we must have $\text{COV}_{Q}\left(e^{y_0},v\right)\geq0$. This is because $y_0$ must either not depend on $v$, or it would depend on $v$ linearly with a positive coefficient, which is on account of the positive linear span operation. This concludes the proof.
\end{proof}

\begin{lemma}\label{Lemma - precompactness}
Suppose that the class of moment $\mathcal{V}$ in~(\ref{eq - moment fxns set}) is precompact in the norm topology of $L_r(Q)$ for some $r>1$. Then $\mathcal{V}$ is precompact in the $L_1(Q)$ norm.
\end{lemma}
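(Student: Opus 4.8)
The plan is to exploit the continuous embedding $L_r(Q)\hookrightarrow L_1(Q)$ that holds on a probability space, and then simply transport a finite net from the $L_r(Q)$ metric to the $L_1(Q)$ metric.

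First I would record the elementary norm comparison: since $Q(\Omega)=1$, Hölder's inequality with conjugate exponents $r$ and $r/(r-1)$ applied to the pair $|h|$ and the constant function $1$ (equivalently, Jensen's inequality applied to the convex map $t\mapsto t^r$) yields
\[
\|h\|_{L_1(Q)}=\int_{\Omega}|h|\,dQ\leq\left(\int_{\Omega}|h|^{r}\,dQ\right)^{1/r}=\|h\|_{L_r(Q)}
\]
for every measurable $h$. In particular $\mathcal{V}\subset L_r(Q)\subset L_1(Q)$, and the identity map from $(L_r(Q),\|\cdot\|_{L_r(Q)})$ to $(L_1(Q),\|\cdot\|_{L_1(Q)})$ is $1$-Lipschitz.

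Next I would recall that, in a complete metric space, a subset is precompact (has compact closure) if and only if it is totally bounded; since both $L_r(Q)$ and $L_1(Q)$ are Banach spaces, this equivalence is available in both. Fix $\epsilon>0$. Precompactness of $\mathcal{V}$ in the $L_r(Q)$ norm gives a finite collection $\{v_1,\ldots,v_N\}$ with $\mathcal{V}\subset\bigcup_{i=1}^{N}\{h:\|h-v_i\|_{L_r(Q)}<\epsilon\}$. For any $v\in\mathcal{V}$, choosing an index $i$ with $\|v-v_i\|_{L_r(Q)}<\epsilon$ and invoking the norm comparison gives $\|v-v_i\|_{L_1(Q)}\leq\|v-v_i\|_{L_r(Q)}<\epsilon$; hence the same points $\{v_1,\ldots,v_N\}$ form a finite $\epsilon$-net for $\mathcal{V}$ in the $L_1(Q)$ norm. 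Since $\epsilon>0$ was arbitrary, $\mathcal{V}$ is totally bounded, hence precompact, in $L_1(Q)$.

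I do not expect any genuine obstacle. The only points requiring a moment's care are that the inequality $\|\cdot\|_{L_1(Q)}\leq\|\cdot\|_{L_r(Q)}$ relies essentially on $Q$ being a finite (probability) measure — the opposite comparison of norms fails for general measures — and that identifying ``precompact'' with ``totally bounded'' is legitimate here precisely because $L_1(Q)$ is complete.
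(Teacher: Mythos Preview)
Your proof is correct and takes essentially the same approach as the paper: the paper invokes Lemma~1 of Hanche-Olsen and Holden (2010) with the identity map as $\Phi$, which is precisely the statement that a uniformly continuous map carries totally bounded sets to totally bounded sets, applied to the $1$-Lipschitz embedding $L_r(Q)\hookrightarrow L_1(Q)$ on a probability space. You have simply unpacked that citation into an explicit elementary argument using H\"older's inequality and the definition of total boundedness.
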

\begin{proof}
The proof proceeds by the direct method and makes use of Lemma~1 in~\citet{HANCHEOLSEN2010385}. Simply apply their result using the identity map as the choice of the function $\Phi$ in the statement of their result.
\end{proof}

\section{Technical Results for Examples~\ref{Example - SD} and~\ref{Example - Bhatacharya}}\label{Appendix - Derivations Examples}

\begin{lemma}\label{lemma - Example SD 2}
Let $\mathcal{V}$ be given as in~ Example~\ref{Example - SD}. Then this set satisfies $\mathcal{V}=\overline{\mathcal{V}}$ and Assumption~\ref{Assump - L_1 and total boundedness}.
\end{lemma}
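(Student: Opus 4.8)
The plan is to mirror the proof of Proposition~\ref{prop - unconditional SD}, since $\mathcal{V}=\{-f_\gamma:\gamma\in[\,0,\overline{\gamma}\,]\}$ with $-f_\gamma(\omega)=G(\gamma)-1[\omega\le\gamma]$ has the same ``indexed family of shifted indicators'' structure; the only new features are the deterministic shift $G(\gamma)$ and the fact that $Q$ is the (atomless) uniform law on $[0,1]$. \textbf{For the closedness $\mathcal{V}=\overline{\mathcal{V}}$}, I would take an arbitrary sequence $\{y_n\}_{n\ge1}\subset\mathcal{V}$ with $y_n\stackrel{L_1(Q)}{\longrightarrow}y$ and show $y\in\mathcal{V}$. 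Writing $y_n=-f_{\gamma_n}$ with $\gamma_n\in[\,0,\overline{\gamma}\,]$, I would pass first to an a.s.-$Q$ convergent subsequence (using that $L_1(Q)$-convergence implies convergence in $Q$-measure), and then, by Bolzano--Weierstrass, to a further subsequence along which $\gamma_{n_k}\to\gamma^\star\in[\,0,\overline{\gamma}\,]$. Since $Q$ is Lebesgue measure on $[0,1]$, it is atomless, so $1[\omega\le\gamma_{n_k}]\to 1[\omega\le\gamma^\star]$ for a.e.~$\omega$ regardless of the side of approach; combined with the convergence $G(\gamma_{n_k})\to G(\gamma^\star)$ this yields $y=-f_{\gamma^\star}\in\mathcal{V}$ a.s.-$Q$. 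Because every subsequence of $\{y_{n_i}\}$ has the same $L_1(Q)$-limit $y$ and the original sequence was arbitrary, $\mathcal{V}=\overline{\mathcal{V}}$ follows.

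\textbf{For Assumption~\ref{Assump - L_1 and total boundedness}} I would bound the $L_1(Q)$ bracketing numbers $N_{[\,]}\!\left(\epsilon,\mathcal{V},L_1(Q)\right)$, exactly as in Proposition~\ref{prop - unconditional SD}. Given $\epsilon>0$, choose $0=\gamma_0<\gamma_1<\cdots<\gamma_k=\overline{\gamma}$ with $\gamma_j-\gamma_{j-1}\le\epsilon/2$ and $G(\gamma_j)-G(\gamma_{j-1})\le\epsilon/2$ for all $j$ (possible with $k\le 4+2/\epsilon$, since both $\omega\mapsto\omega$ and $G$ are non-decreasing with total variation at most $1$ on $[\,0,\overline{\gamma}\,]$), and use the brackets
\[
\ell_j=G(\gamma_{j-1})-1[\omega\le\gamma_j],\qquad U_j=G(\gamma_j)-1[\omega\le\gamma_{j-1}],\qquad j=1,\dots,k.
\]
Monotonicity of $G$ and of $\gamma\mapsto 1[\omega\le\gamma]$ gives $\ell_j\le -f_\gamma\le U_j$ for every $\gamma\in[\gamma_{j-1},\gamma_j]$, while $\|U_j-\ell_j\|_{L_1(Q)}=\bigl(G(\gamma_j)-G(\gamma_{j-1})\bigr)+(\gamma_j-\gamma_{j-1})\le\epsilon$. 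Hence $N_{[\,]}\!\left(\epsilon,\mathcal{V},L_1(Q)\right)\le k<+\infty$ for every $\epsilon>0$, which is precisely $L_1(Q)$-precompactness of $\mathcal{V}$.

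\textbf{The main obstacle} is the identification of the limit point $y$ in the closedness argument: one must rule out ``spurious'' limits of the form $c-1[\omega\le\gamma^\star]$ with $c\neq G(\gamma^\star)$. This is where I would lean on the specific structure at hand --- atomlessness of the uniform $Q$ to pin down the indicator part, and the continuity behaviour of the CDF $G$ (right-continuity, together with a judicious choice of subsequence) to pin down the constant $c=\lim_k G(\gamma_{n_k})$ --- so it is the step most sensitive to the precise hypotheses on $G$. The bracketing computation and the remaining bookkeeping are a direct transcription of the argument for Proposition~\ref{prop - unconditional SD}.
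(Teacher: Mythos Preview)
Your approach is essentially identical to the paper's: both parts follow the template of Proposition~\ref{prop - unconditional SD}, extracting an a.s.-convergent subsequence and applying Bolzano--Weierstrass on $\gamma$ for closedness, and bounding the $L_1(Q)$ bracketing number via a partition of $[0,\overline{\gamma}]$ for precompactness. The paper's brackets differ only cosmetically (it uses $G(\gamma_{j-1}-)$ and $1[\omega<\gamma_j]$ to better accommodate jumps of $G$), and your flagging of the $G$-continuity issue as the ``main obstacle'' is in fact more careful than the paper's own one-line treatment of that identification step.
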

\begin{proof}
The proof proceeds by the direct method. On establishing (i), consider an arbitrary sequence \\ $\{y_n\}_{n\geq1}\subset \mathcal{V}$ such that $y_n\stackrel{L_1(Q)}{\longrightarrow} y$. To prove the desired result, we need to establish that $y\in\mathcal{V}$. As convergence in $L_1(Q)$ implies convergence in $Q$-measure, there exists a non-random increasing sequence of integers $n_1,n_2,\ldots,$ such that $\{y_{n_k}\}_{k\geq1}$ converges to $y$ a.s.-$Q$ (e.g., see Theorem~7.2.13, in~\citet{Grimmet-Stirkazer}). That is,
\begin{align}\label{eq - Example 1 0}
y(\omega)=\lim_{k\rightarrow\infty} y_{n_k}(\omega)\quad \text{a.s.}-Q.
\end{align}
The limit~(\ref{eq - Example 1 0}) implies $\{\gamma_{n_k}\}_{k\geq0}\subset [0,\overline{\gamma}]$ holds. Thus, there exists a subsequence $\{\gamma_{n_{k_\ell}}\}_{\ell\geq1}$ such that $\lim_{\ell\rightarrow \infty}\gamma_{n_{k_\ell}}=\gamma^{\star}\in[0,\overline{\gamma}]$, by the Bolzano-Weierstrass Theorem. Combining this conclusion with the limit~(\ref{eq - Example 1 0}) yields
$y(\omega)=\lim_{\ell\rightarrow\infty} y_{n_{k_\ell}}(\omega)=G_1(\gamma^{\star})-1[\omega\leq \gamma^{\star}]$  $\text{a.s.}-Q$,
as every subsequence of $\{y_{n_k}\}_{k\geq1}$ converges to $y$ a.s.-$Q$. Therefore, $y\in\mathcal{V}$. Because the sequence $\{y_n\}_{n\geq1}\subset \mathcal{V}$ such that $y_n\stackrel{L_1(Q)}{\longrightarrow} y$, was arbitrary, we have $\mathcal{V}=\overline{\mathcal{V}}$.

\par On establishing (ii), we will show the $L_1(Q)$ bracketing number of $\mathcal{V}$ is finite. Given $\epsilon>0$, we can always find a partition of $\mathbb{R}$ of size $k$, where $0=\gamma_1<\gamma_2<\cdots<\gamma_k=\overline{\gamma}$, such that $G_1(\gamma_j\,-)-G_1(\gamma_{j-1})+\gamma_j-\gamma_{j-1}\leq \epsilon$ $\forall 1\leq j\leq k$,
with  $G_1(\gamma_j\,-)=\lim_{s\uparrow \gamma_j}G_1(s)$. Consider the collection of brackets $u_j(\omega)=G_1(\gamma_j)-1\left[\omega\leq \gamma_{j-1}\right]\quad\text{and}\quad l_j(\omega)=G_1(\gamma_{j-1}\,-)-1\left[\omega< \gamma_j\right]$ $\forall 1\leq j\leq k$. Now each element of $\mathcal{V}$ is in at least one bracket, i.e., for $\gamma_{j-1}\leq \gamma\leq \gamma_j$,
$l_j(\omega)\leq G_1(\gamma)-1[\omega\leq \gamma]\leq u_j(\omega)$ $\text{a.s.}-Q$,
and $\|u_j-l_j\|_{L_1(Q)}\leq \epsilon$. Thus, the $L_1(Q)$ bracketing number of $\mathcal{V}$ satisfies the inequalities $N_{[\,]}\left(\epsilon,\mathcal{V}, L_1(Q)\right)<\infty$ $\forall \epsilon>0$, and hence, $\mathcal{V}$ is precompact in $L_1(Q)$.
\end{proof}

\begin{lemma}\label{lemma - Example Bhattacharya}
Let $\mathcal{V}$ be given as in Example~\ref{Example - Bhatacharya}. Then this set satisfies $\mathcal{V}=\overline{\mathcal{V}}$ and Assumption~\ref{Assump - L_1 and total boundedness}.
\end{lemma}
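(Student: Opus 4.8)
The plan is to adapt, essentially verbatim, the template used for Lemma~\ref{lemma - Example SD 2} and Proposition~\ref{prop - unconditional SD}, with two features specific to this example: the set is a union $\mathcal{V}=\mathcal{V}_1\cup\mathcal{V}_2$, and the index runs over all of $\mathbb{R}$ rather than a compact interval. Accordingly, the proof has two halves, one for $\mathcal{V}=\overline{\mathcal{V}}$ (a subsequence/limit-identification argument) and one for Assumption~\ref{Assump - L_1 and total boundedness} (a finite $L_1(Q)$ bracketing-number bound), and in each half the union is handled by treating $\mathcal{V}_1$ and $\mathcal{V}_2$ separately and then recombining.

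For $\mathcal{V}=\overline{\mathcal{V}}$, I would take an arbitrary sequence $\{y_n\}_{n\ge1}\subset\mathcal{V}$ with $y_n\stackrel{L_1(Q)}{\longrightarrow}y$ and show $y\in\mathcal{V}$. A pigeonhole argument yields a subsequence lying entirely in one of the two families, say $\mathcal{V}_1$, so that $y_{n}=\bigl(G_1(\gamma_{n})-1[x_1\le\gamma_{n}]\bigr)1[x_2\in\mathbb{R}]$ with $\gamma_{n}\in\mathbb{R}$; passing to a further subsequence, $y_{n}\to y$ a.s.-$Q$. I then split on the behaviour of $\{\gamma_{n}\}$. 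If $\{\gamma_{n}\}$ is bounded, the Bolzano--Weierstrass Theorem gives $\gamma_{n}\to\gamma^\star\in\mathbb{R}$ along a subsequence, and—arguing exactly as in Proposition~\ref{prop - unconditional SD}—the a.s.-$Q$ limit is identified as $\bigl(G_1(\gamma^\star)-1[x_1\le\gamma^\star]\bigr)1[x_2\in\mathbb{R}]\in\mathcal{V}_1$. If $\{\gamma_{n}\}$ has a subsequence tending to $+\infty$ (resp.\ $-\infty$), then $G_1(\gamma_{n})\to1$ and $1[x_1\le\gamma_{n}]\to1$ (resp.\ both tend to $0$) pointwise, so by dominated convergence $y_{n}\to0$ in $L_1(Q)$, hence $y=0$, the degenerate element of $\mathcal{V}$ associated with the endpoints $\gamma=\pm\infty$. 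Either way $y\in\mathcal{V}$, and the symmetry $(X_1,G_1)\leftrightarrow(X_2,G_2)$ covers the case where the subsequence lies in $\mathcal{V}_2$; since the original sequence was arbitrary, $\mathcal{V}=\overline{\mathcal{V}}$.

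For Assumption~\ref{Assump - L_1 and total boundedness} I would bound the $L_1(Q)$ bracketing number of $\mathcal{V}$. Since $\mathcal{V}=\mathcal{V}_1\cup\mathcal{V}_2$,
\[
N_{[\,]}\!\left(\epsilon,\mathcal{V},L_1(Q)\right)\le N_{[\,]}\!\left(\epsilon,\mathcal{V}_1,L_1(Q)\right)+N_{[\,]}\!\left(\epsilon,\mathcal{V}_2,L_1(Q)\right),
\]
so it suffices to bound each term. Fixing $i\in\{1,2\}$ and $\epsilon>0$, because $G_i+Q_{X_i}$ is a finite measure on $\mathbb{R}$ one can choose a finite grid $-\infty=t_0<t_1<\cdots<t_k=+\infty$ for which the brackets constructed precisely as in Lemma~\ref{lemma - Example SD 2}—with $G_i$ in place of $G_1$, left limits of $G_i$ and the appropriate strict/non-strict indicators used to absorb atoms of $G_i$ and $Q_{X_i}$, and the trivial factor $1[x_{3-i}\in\mathbb{R}]$ carried along—contain every element of $\mathcal{V}_i$ and have $L_1(Q)$-width at most $\epsilon$. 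Hence $N_{[\,]}(\epsilon,\mathcal{V}_i,L_1(Q))\le k<+\infty$ for each $i$, and by the same bracketing-to-precompactness implication used in Lemma~\ref{lemma - Example SD 2} and Proposition~\ref{prop - unconditional SD}, $\mathcal{V}$ is precompact in $L_1(Q)$.

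I expect the delicate point to be the first half, specifically the non-compactness of the index set $\mathbb{R}$: unlike in Proposition~\ref{prop - unconditional SD}, where $\Gamma$ is a compact interval and Bolzano--Weierstrass applies immediately, here one must separately treat a diverging index sequence, show it forces the $L_1(Q)$-limit to be the zero function, and regard that limit as an element of $\mathcal{V}$—equivalently, carry out the argument on the two-point compactification $[-\infty,+\infty]$ of the parameter line, on which $\gamma\mapsto\bigl(G_i(\gamma)-1[x_i\le\gamma]\bigr)1[x_{3-i}\in\mathbb{R}]$ extends $L_1(Q)$-continuously with both endpoints mapping to $0$. The union structure and the atom bookkeeping in the bracket construction are then routine, given the one-dimensional cases already established.
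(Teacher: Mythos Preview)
Your proposal is correct and follows essentially the same route as the paper's proof: both reduce the union $\mathcal{V}=\mathcal{V}_1\cup\mathcal{V}_2$ to its two pieces, argue closure of each $\mathcal{V}_i$ via an a.s.-convergent subsequence and identification of the limiting index in the two-point compactification $[-\infty,+\infty]$, and establish precompactness by a finite $L_1(Q)$ bracketing-number bound for each $\mathcal{V}_i$ using a grid controlling increments of both $G_i$ and $Q_{X_i}$. The only cosmetic differences are that the paper invokes the two-point compactification directly where you do an explicit bounded/unbounded case split (which you yourself note is equivalent), and the paper phrases the reduction as ``$\mathcal{V}_i=\overline{\mathcal{V}}_i$ for each $i$'' rather than your pigeonhole extraction of a subsequence in one $\mathcal{V}_i$.
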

\begin{proof}
The proof proceeds by the direct method. On establishing (i), it is sufficient to show $\mathcal{V}_i=\overline{\mathcal{V}}_i$, for each $i$. Without loss of generality, our derivations focus on the case $i=1$. Consider an arbitrary sequence $\{y_n\}_{n\geq1}\subset \mathcal{V}_1$ such that $y_n\stackrel{L_1(Q)}{\longrightarrow} y$. To prove the desired result, we need to establish that $y\in\mathcal{V}_1$. As convergence in $L_1(Q)$ implies convergence in $Q$-measure, there exists a non-random increasing sequence of integers $n_1,n_2,\ldots,$ such that $\{y_{n_k}\}_{k\geq1}$ converges to $y$ a.s.-$Q$ (e.g., see Theorem~7.2.13, in~\citet{Grimmet-Stirkazer}). That is,
\begin{align}\label{eq - Example 2 0}
y(\omega)=\lim_{k\rightarrow\infty} y_{n_k}(\omega)\quad \text{a.s.}-Q.
\end{align}
The limit~(\ref{eq - Example 2 0}) implies $\{\gamma_{n_k}\}_{k\geq0}\subset [-\infty,+\infty]$ holds. Thus, there exists a subsequence $\{\gamma_{n_{k_\ell}}\}_{\ell\geq1}$ such that $\lim_{\ell\rightarrow \infty}\gamma_{n_{k_\ell}}=\gamma^{\star}\in[-\infty,+\infty]$, because $[-\infty,+\infty]$ is the two-point compactification of $\mathbb{R}$. Combining this conclusion with the limit~(\ref{eq - Example 2 0}) yields
$y(\omega)=\lim_{\ell\rightarrow\infty} y_{n_{k_\ell}}(\omega)=\left(G_1(\gamma^{\star})-1[x_1\leq \gamma^{\star}]\right)\,1[x_2\in\mathbb{R}]$ $\text{a.s.}-Q$,
as every subsequence of $\{y_{n_k}\}_{k\geq1}$ converges to $y$ a.s.-$Q$. Therefore, $y\in\mathcal{V}_1$. Because the sequence $\{y_n\}_{n\geq1}\subset \mathcal{V}_1$ such that $y_n\stackrel{L_1(Q)}{\longrightarrow} y$, was arbitrary, we have $\mathcal{V}_1=\overline{\mathcal{V}}_1$. An identical argument holds for establishing that $\mathcal{V}_2=\overline{\mathcal{V}}_2$, which we omit for brevity.

\par On establishing (ii), it is sufficient to show $\mathcal{V}_1$ and $\mathcal{V}_2$ are precompact in $L_1(Q)$, since $\mathcal{V}$ is their union. Without loss of generality, our derivations focus on the case $i=1$, and shows that this set's $L_1(Q)$ bracketing number of $\mathcal{V}$ is finite. Given $\epsilon>0$, we can always find a partition of $\mathbb{R}$ of size $k$, where $-\infty=\gamma_1<\gamma_2<\cdots<\gamma_k=\infty$, such that
$G_1(\gamma_j\,-)-G_1(\gamma_{j-1})+Q_{X_1}(\gamma_1,\gamma_j\,-)-Q_{X_1}(\gamma_1,\gamma_{j-1})\leq \epsilon$ $\forall 1\leq j\leq k$
with  $G_1(\gamma_j\,-)=\lim_{s\uparrow \gamma_j}G_1(s)$ and  $Q_{X_1}(\gamma_1,\gamma_j\,-)=\lim_{s\uparrow \gamma_j}Q_{X_1}(\gamma_1,s)$. Consider the collection of brackets
$u_j(x_1)=G_1(\gamma_j)-1\left[1\leq \gamma_{j-1}\right]$ and $l_j(x_1)=G_1(\gamma_{j-1}\,-)-1\left[1< \gamma_j\right]$ $\forall 1\leq j\leq k$. Now each element of $\mathcal{V}_1$ is in at least one bracket, i.e., for $\gamma_{j-1}\leq \gamma\leq \gamma_j$, $l_j(x_1)\leq G_1(\gamma)-1[x_1\leq \gamma]\leq u_j(x_1)$, $\text{a.s.}-Q$, and $\|u_j-l_j\|_{L_1(Q)}\leq \epsilon$. Thus, $N_{[\,]}\left(\epsilon,\mathcal{V}_1, L_1(Q)\right)<\infty$ $\forall \epsilon>0$, and hence, $\mathcal{V}_1$ is precompact in $L_1(Q)$. We can follow identical steps to those above to deduce that $\mathcal{V}_2$ is precompact in $L_1(Q)$. We omit these details for brevity.
\end{proof}

\section{Proof of Proposition~\ref{Prop - V Random Set}}\label{Appendix - proof RS}
\begin{proof}
The proof proceeds by the direct method. First, using the Lipschitz continuity of the capacity functional $\sigma$ (i.e., inequality~(\ref{eq - V RS 0})), and following steps similar to those in the proof of Proposition~\ref{prop - unconditional SD}, but now accounting for the different forms of the moment functions $f_\gamma$ and $\omega$, we can establish that $\mathcal{V}=\overline{\mathcal{V}}$ holds. Specifically, the key adjustment entails using the compactness of $\mathcal{C}(\Omega)$ (with the Hausdorff metric) as it implies that it must have the Bolzano-Weierstrass Property, which we exploit in the proof of Proposition~\ref{prop - unconditional SD} so that we can calculate the $L_1(Q)$ limit of sequences in $\mathcal{V}$ and deduce that they must be elements of $\mathcal{V}$. We omit the details for brevity. This result implies that $\mathcal{V}$ is closed in norm topology of $L_1(Q)$.

\par Next, we establish that $\mathcal{V}$ is precompact. Denote by $\ominus$ the symmetric difference set operation on $C(\Omega)$. Given $\epsilon>0$, denote by $N\left(\epsilon,\mathcal{V}, L_1(Q)\right)$ the minimum number of $\epsilon$-balls in the $L_1(Q)$-norm needed to ensure that every $y\in\mathcal{V}$ lies in at least one such ball. The precompactness of $\mathcal{V}$ can be established by showing that $N\left(\epsilon,\mathcal{V}, L_1(Q)\right)<+\infty$ $\forall \epsilon>0$. Towards that end, for any $\rho>0$, $\exists n\in\mathbb{Z}_+$ such that $A_i\in \mathcal{C}(\Omega)$ for $i=1,\ldots,n$ with $\mathcal{C}(\Omega)\subseteq\bigcup_{i=1}^{n}B(A_i,\rho)$, where $B(A_i,\rho)$ is a $\rho$-ball in $\mathcal{C}(\Omega)$. Next, consider the sets $\left\{-f_\gamma:\gamma\in B(A_i,\rho)\right\}$ and let $-f_{\gamma_i}$ be the moment function with $\gamma=A_i$, for $i=1,\ldots,n$. Fix $i$, and let $f_\gamma\in\left\{-f_\gamma:\gamma\in B(A_i,\rho)\right\}$. Then we have
\begin{align*}
\int_{\Omega}|f_{\gamma}-f_{\gamma_i}|\,dQ & \leq \left|\sigma(A)-\sigma(A_i)\right|+Q\left(A\ominus A_i\right)\;(\text{by the form of $f_{\gamma}$})\\
 & \leq 2 d_{H}\left(A,A_i\right)+Q\left(A\ominus A_i\right)\;(\text{by~(\ref{eq - V RS 0})})\\
 & \leq 2 \rho+Q\left(A\ominus A_i\right)\;(\text{by precompactness of $\mathcal{C}(\Omega)$}) \\
 & \leq 2 \rho +2\rho=4\rho.
\end{align*}
Thus, the set $\left\{-f_\gamma:\gamma\in B(A_i,\rho)\right\}$ is a ball in $\mathcal{V}$ based on the $L_1(Q)$-norm which is centered at $-f_{\gamma_i}$ having radius $8\rho$. As $\mathcal{V}\subseteq \bigcup_{i=1}^{n}\left\{-f_\gamma:\gamma\in B(A_i,\rho)\right\},$ we can set $\rho=\epsilon/4$, to obtain $n$ balls in $\mathcal{V}$ using the desired norm, each with radius $\epsilon$, having centers $\left\{-f_{\gamma_i},i=1,\ldots,n\right\}$, which cover $\mathcal{V}$. This shows $N\left(\epsilon,\mathcal{V}, L_1(Q)\right)<\infty$ for an arbitrary $\epsilon>0$, and hence, the above argument holds for all $\epsilon>0$.
\end{proof}


\bibliographystyle{chicago}
\bibliography{mcgilletd}

\begin{thebibliography}{}

\bibitem[\protect\citeauthoryear{Alvarez-Mena and
  Hern\'{a}ndez-Lerma}{Alvarez-Mena and
  Hern\'{a}ndez-Lerma}{2005}]{Mena-Lerma-2005}
Alvarez-Mena, J. and O.~Hern\'{a}ndez-Lerma (2005).
\newblock Convergence and approximation of optimization problems.
\newblock {\em SIAM Journal on Optimization\/}~{\em 15\/}(2), 527--539.

\bibitem[\protect\citeauthoryear{Alwan, Ebrahimi, and Soofi}{Alwan
  et~al.}{1998}]{ALWAN1998526}
Alwan, L., N.~Ebrahimi, and E.~Soofi (1998).
\newblock Information theoretic framework for process control.
\newblock {\em European Journal of Operational Research\/}~{\em 111\/}(3),
  526--542.

\bibitem[\protect\citeauthoryear{Andrews and Shi}{Andrews and
  Shi}{2013}]{Andrews-Shi}
Andrews, D. W.~K. and X.~Shi (2013).
\newblock Inference {B}ased on {C}onditional {M}oment {I}nequalities.
\newblock {\em Econometrica\/}~{\em 82}.

\bibitem[\protect\citeauthoryear{Artstein}{Artstein}{1983}]{Artstein1983DistributionsOR}
Artstein, Z. (1983).
\newblock Distributions of random sets and random selections.
\newblock {\em Israel Journal of Mathematics\/}~{\em 46}, 313--324.

\bibitem[\protect\citeauthoryear{Bajgiran, Mardikoraem, and Soofi}{Bajgiran
  et~al.}{2021}]{BAJGIRAN2021196}
Bajgiran, A.~H., M.~Mardikoraem, and E.~S. Soofi (2021).
\newblock Maximum entropy distributions with quantile information.
\newblock {\em European Journal of Operational Research\/}~{\em 290\/}(1),
  196--209.

\bibitem[\protect\citeauthoryear{Bhatacharya and Dykstra}{Bhatacharya and
  Dykstra}{1995}]{Bhatacharya-Dykstra}
Bhatacharya, B. and R.~L. Dykstra (1995).
\newblock A {G}eneral {D}uality {A}pproach to {$I$}-{P}rojections.
\newblock {\em Journal of Statistical Planning and Inference\/}~{\em 47},
  203--216.

\bibitem[\protect\citeauthoryear{Bhattacharya}{Bhattacharya}{2006}]{bhattacharya2006}
Bhattacharya, B. (2006, 04).
\newblock An iterative procedure for general probability measures to obtain
  {I}-projections onto intersections of convex sets.
\newblock {\em Ann. Statist.\/}~{\em 34\/}(2), 878--902.

\bibitem[\protect\citeauthoryear{Bhattacharya and Dykstra}{Bhattacharya and
  Dykstra}{1997}]{Bhattacharya1997}
Bhattacharya, B. and R.~L. Dykstra (1997).
\newblock On {D}ykstra's iterative fitting procedure.
\newblock {\em Annals of the Institute of Statistical Mathematics\/}~{\em
  49\/}(3), 435--446.

\bibitem[\protect\citeauthoryear{Billingsley}{Billingsley}{1995}]{Billingsley1}
Billingsley, P. (1995).
\newblock {\em Probability and {M}easure\/} (Third ed.).
\newblock New York: John Wiley \& Sons.

\bibitem[\protect\citeauthoryear{Bogachev}{Bogachev}{2007}]{Bogachev}
Bogachev, V.~I. (2007).
\newblock {\em Measure Theory: Volume 1}.
\newblock Springer.

\bibitem[\protect\citeauthoryear{Borwein and Lewis}{Borwein and
  Lewis}{1991}]{Borwein-Lewis-1991}
Borwein, J.~M. and A.~S. Lewis (1991).
\newblock Duality relationships for entropy-like minimization problems.
\newblock {\em SIAM Journal on Control and Optimization\/}~{\em 29\/}(2),
  325--338.

\bibitem[\protect\citeauthoryear{Borwein and Lewis}{Borwein and
  Lewis}{1993}]{Borwein-Lewis-1993}
Borwein, J.~M. and A.~S. Lewis (1993).
\newblock Partially-finite programming in \${L}\_1 \$ and the existence of
  maximum entropy estimates.
\newblock {\em SIAM Journal on Optimization\/}~{\em 3\/}(2), 248--267.

\bibitem[\protect\citeauthoryear{Boyle, Feng, and Tian}{Boyle
  et~al.}{2007}]{BOYLE2007971}
Boyle, P., S.~Feng, and W.~Tian (2007).
\newblock Chapter 24 large deviation techniques and financial applications.
\newblock In J.~R. Birge and V.~Linetsky (Eds.), {\em Financial Engineering},
  Volume~15 of {\em Handbooks in Operations Research and Management Science},
  pp.\  971--1000. Elsevier.

\bibitem[\protect\citeauthoryear{Brockett, Charnes, Cooper, Learner, and
  Phillips}{Brockett et~al.}{1995}]{Brockett1995}
Brockett, P.~L., A.~Charnes, W.~W. Cooper, D.~Learner, and F.~Y. Phillips
  (1995).
\newblock Information theory as a unifying statistical approach for use in
  marketing research.
\newblock {\em European Journal of Operational Research\/}~{\em 84\/}(2),
  310--329.

\bibitem[\protect\citeauthoryear{Canay}{Canay}{2010}]{Canay}
Canay, I.~A. (2010).
\newblock E{L} {I}nference for {P}artially {I}dentified {M}odels:{L}arge
  {D}eviations {O}ptimality and {B}ootstrap {V}alidity.
\newblock {\em Journal of Econometrics\/}~{\em 156\/}(2), 408--425.

\bibitem[\protect\citeauthoryear{Chang}{Chang}{2022}]{CHANG2022108150}
Chang, Y.-C. (2022).
\newblock A design parameter-free geometric kullback-leibler information
  control chart for monitoring bernoulli processes.
\newblock {\em Computers \& Industrial Engineering\/}~{\em 169}, 108150.

\bibitem[\protect\citeauthoryear{Chen}{Chen}{2008}]{Chen2008}
Chen, C. (2008).
\newblock An information-theoretic view of visual analytics.
\newblock {\em IEEE Computer Graphics and Applications\/}~{\em 28\/}(1),
  18--23.

\bibitem[\protect\citeauthoryear{Csisz\'{a}r}{Csisz\'{a}r}{1975}]{csiszar1975}
Csisz\'{a}r, I. (1975, 02).
\newblock ${I}$-divergence geometry of probability distributions and
  minimization problems.
\newblock {\em The Annals of Probability\/}~{\em 3\/}(1), 146--158.

\bibitem[\protect\citeauthoryear{Csisz\'{a}r}{Csisz\'{a}r}{1984}]{Csiszar-conditional}
Csisz\'{a}r, I. (1984).
\newblock Sanov {P}roperty, {G}eneralized {$I$}-{P}rojection and a
  {C}onditional {L}imit {T}heorem.
\newblock {\em The Annals of Probability\/}~{\em 12}, 768--793.

\bibitem[\protect\citeauthoryear{Csisz\'{a}r and Matus}{Csisz\'{a}r and
  Matus}{2003}]{csiszar-matus}
Csisz\'{a}r, I. and F.~Matus (2003).
\newblock Information projections revisited.
\newblock {\em IEEE Transactions on Information Theory\/}~{\em 49\/}(6),
  1474--1490.

\bibitem[\protect\citeauthoryear{Dal~Maso}{Dal~Maso}{1993}]{Dal_Maso}
Dal~Maso, G. (1993).
\newblock {\em An Introduction to $\Gamma$-Convergence}.
\newblock Pogress in Nonlinear Differnetial Equations and Their Applications.
  Birkhäuser Boston, MA.

\bibitem[\protect\citeauthoryear{Dembo and Kontoyiannis}{Dembo and
  Kontoyiannis}{2002}]{Dembo-Kontoyiannis}
Dembo, A. and L.~Kontoyiannis (2002).
\newblock Source coding, large deviations, and approximate pattern matching.
\newblock {\em IEEE Transactions on Information Theory\/}~{\em 48\/}(6),
  1590--1615.

\bibitem[\protect\citeauthoryear{Dhilion, Mallela, and Kumar}{Dhilion
  et~al.}{2003}]{Dhilion}
Dhilion, I.~S., S.~Mallela, and R.~Kumar (2003).
\newblock A divisive information-theoretic feature clustering algorithm for
  text classification.
\newblock {\em Journal of Machine Learning Research\/}~{\em 3}, 1256--1287.

\bibitem[\protect\citeauthoryear{Du, Hu, and Zhang}{Du
  et~al.}{2020}]{Du-Hu-Zhang}
Du, J., M.~Hu, and W.~Zhang (2020).
\newblock Missing data problem in the monitoring system: A review.
\newblock {\em IEEE Sensors Journal\/}~{\em 20\/}(23), 13984--13998.

\bibitem[\protect\citeauthoryear{Dykstra}{Dykstra}{1985}]{Dykstra-AoP}
Dykstra, R.~L. (1985).
\newblock {An Iterative Procedure for Obtaining $I$-Projections onto the
  Intersection of Convex Sets}.
\newblock {\em The Annals of Probability\/}~{\em 13\/}(3), 975 -- 984.

\bibitem[\protect\citeauthoryear{Dykstra and Wollan}{Dykstra and
  Wollan}{1987}]{Dykstra-Wollen-87}
Dykstra, R.~L. and P.~C. Wollan (1987).
\newblock Algorithm as 228: Finding i-projections subject to a finite set of
  linear inequality constraints.
\newblock {\em Journal of the Royal Statistical Society. Series C (Applied
  Statistics)\/}~{\em 36\/}(3), 377--383.

\bibitem[\protect\citeauthoryear{Floerchinger and Haas}{Floerchinger and
  Haas}{2020}]{PhysRevE.102.052117}
Floerchinger, S. and T.~Haas (2020, Nov).
\newblock Thermodynamics from relative entropy.
\newblock {\em Phys. Rev. E\/}~{\em 102}, 052117.

\bibitem[\protect\citeauthoryear{Folland}{Folland}{1999}]{Folland}
Folland, G.~B. (1999).
\newblock {\em Real {A}nalysis: {M}odern {T}echniques and their
  {A}pplications\/} (2nd ed.).
\newblock Wiley Inter-Science.

\bibitem[\protect\citeauthoryear{Foster and Shorrocks}{Foster and
  Shorrocks}{1988}]{Foster-Shorrocks}
Foster, J. and A.~Shorrocks (1988).
\newblock Poverty {O}rderings.
\newblock {\em Econometrica\/}~{\em 56\/}(1), 173--177.

\bibitem[\protect\citeauthoryear{Frittelli}{Frittelli}{2000}]{Frittelli2000}
Frittelli, M. (2000).
\newblock The minimal entropy martingale measure and the valuation problem in
  incomplete markets.
\newblock {\em Mathematical Finance\/}~{\em 10\/}(1), 39--52.

\bibitem[\protect\citeauthoryear{Ganchev, Graca, Gillenwater, and
  Taskar}{Ganchev et~al.}{2010}]{Ganchev2010}
Ganchev, K., J.~Graca, J.~Gillenwater, and B.~Taskar (2010).
\newblock Posterior regularization for structured latent variable models.
\newblock {\em Journal of Machine Learning Research\/}~{\em 11}, 2001 -- 2049.

\bibitem[\protect\citeauthoryear{Gelfand}{Gelfand}{1936}]{Gelfand}
Gelfand, I.~M. (1936).
\newblock Sur un lemme de la theorie des espaces lineaires.
\newblock {\em Communications de l'Institut des Sciences Mathématiques et
  mécaniques de l'Université de Kharkoff et la Société Mathématique de
  Kharkoff\/}~{\em 13}, 35--40.

\bibitem[\protect\citeauthoryear{Grimmet and Stirzaker}{Grimmet and
  Stirzaker}{2001}]{Grimmet-Stirkazer}
Grimmet, G. and D.~Stirzaker (2001).
\newblock {\em Probability and {R}andom {P}rocesses\/} (3 ed.).
\newblock Oxford University Press.

\bibitem[\protect\citeauthoryear{Gr{\"u}nwald, de~Heide, and
  Koolen}{Gr{\"u}nwald et~al.}{2024}]{Grunwald2024}
Gr{\"u}nwald, P., R.~de~Heide, and W.~Koolen (2024).
\newblock Safe testing.
\newblock {\em Journal of the Royal Statistical Society: Series B (Statistical
  Methodology)\/}~{\em 86}, 1091--1128.
\newblock Read before The Royal Statistical Society at a meeting organized by
  the Research Section on Wednesday, 24th January 2024.

\bibitem[\protect\citeauthoryear{Haberman}{Haberman}{1984}]{haberman1984}
Haberman, S.~J. (1984, 09).
\newblock Adjustment by minimum discriminant information.
\newblock {\em Ann. Statist.\/}~{\em 12\/}(3), 971--988.

\bibitem[\protect\citeauthoryear{Hanche-Olsen and Holden}{Hanche-Olsen and
  Holden}{2010}]{HANCHEOLSEN2010385}
Hanche-Olsen, H. and H.~Holden (2010).
\newblock The {K}olmogorov-{R}iesz compactness theorem.
\newblock {\em Expositiones Mathematicae\/}~{\em 28\/}(4), 385 -- 394.

\bibitem[\protect\citeauthoryear{Harris and Mapp}{Harris and
  Mapp}{1986}]{Harris-Mapp}
Harris, T.~R. and H.~P. Mapp (1986).
\newblock A stochastic dominance comparison of water-conserving irrigation
  strategies.
\newblock {\em American Journal of Agricultural Economics\/}~{\em 68\/}(2),
  298--305.

\bibitem[\protect\citeauthoryear{Hoeffding}{Hoeffding}{1965}]{Hoeffding-1965}
Hoeffding, W. (1965).
\newblock {Asymptotically Optimal Tests for Multinomial Distributions}.
\newblock {\em The Annals of Mathematical Statistics\/}~{\em 36\/}(2), 369 --
  401.

\bibitem[\protect\citeauthoryear{Kallenberg}{Kallenberg}{2021}]{kallenberg2021foundations}
Kallenberg, O. (2021).
\newblock {\em Foundations of Modern Probability\/} (Third ed.), Volume~99 of
  {\em Probability Theory and Stochastic Modelling}.
\newblock Springer Nature Switzerland AG.

\bibitem[\protect\citeauthoryear{Kandasamy, Krishnamurthy, Poczos, Wasserman,
  and robins}{Kandasamy et~al.}{2015}]{NIPS2015_06138bc5}
Kandasamy, K., A.~Krishnamurthy, B.~Poczos, L.~Wasserman, and j.~m. robins
  (2015).
\newblock Nonparametric von mises estimators for entropies, divergences and
  mutual informations.
\newblock In {\em Advances in Neural Information Processing Systems},
  Volume~28. Curran Associates, Inc.

\bibitem[\protect\citeauthoryear{Kortanek}{Kortanek}{1993}]{Kortanek1993}
Kortanek, K.~O. (1993, Oct).
\newblock Semi-infinite programming duality for order restricted statistical
  inference models.
\newblock {\em Zeitschrift f{\"u}r Operations Research\/}~{\em 37\/}(3),
  285--301.

\bibitem[\protect\citeauthoryear{Kullback}{Kullback}{1959}]{Kullback}
Kullback, S. (1959).
\newblock {\em Information {T}heory and {S}tatistics}.
\newblock New York, U.S.A: Wiley.

\bibitem[\protect\citeauthoryear{Kullback and Leibler}{Kullback and
  Leibler}{1951}]{Kullback-Leibler}
Kullback, S. and R.~A. Leibler (1951).
\newblock On {I}nformation and {S}ufficiency.
\newblock {\em The Annals of Mathematical Statistics\/}~{\em 22\/}(1), 79 --
  86.

\bibitem[\protect\citeauthoryear{Lassance and Vrins}{Lassance and
  Vrins}{2023}]{LASSANCE2023302}
Lassance, N. and F.~Vrins (2023).
\newblock Portfolio selection: A target-distribution approach.
\newblock {\em European Journal of Operational Research\/}~{\em 310\/}(1),
  302--314.

\bibitem[\protect\citeauthoryear{L\'eonard}{L\'eonard}{2012}]{LEONARD}
L\'eonard (2012).
\newblock From the {S}chr\"odinger problem to the {M}onge-{K}antorovich
  problem.
\newblock {\em Journal of Functional Analysis\/}~{\em 262\/}(4), 1879--1920.

\bibitem[\protect\citeauthoryear{Luenberger}{Luenberger}{1969}]{Luenberger}
Luenberger, D. (1969).
\newblock {\em Optimization by Vector Space Methods}.
\newblock Series in Decision and Control. John Wiley \& Sons, Inc.

\bibitem[\protect\citeauthoryear{Manski}{Manski}{2005}]{manski2005}
Manski, C.~F. (2005).
\newblock Partial identification with missing data: concepts and findings.
\newblock {\em International Journal of Approximate Reasoning\/}~{\em
  39\/}(2-3), 151--165.

\bibitem[\protect\citeauthoryear{Nutz and Wiesel}{Nutz and
  Wiesel}{2022}]{NutzWiesel2022}
Nutz, M. and J.~Wiesel (2022).
\newblock Entropic optimal transport: convergence of potentials.
\newblock {\em Probability Theory and Related Fields\/}~{\em 184}, 401--424.

\bibitem[\protect\citeauthoryear{Pettis}{Pettis}{1938}]{Pettis}
Pettis, B.~J. (1938).
\newblock On integration in vector spaces.
\newblock {\em Transactions of The American Mathematical Society\/}~{\em 44},
  70--74.

\bibitem[\protect\citeauthoryear{Peyre and Cuturi}{Peyre and
  Cuturi}{2019}]{COTFNT}
Peyre, G. and M.~Cuturi (2019).
\newblock Computational optimal transport.
\newblock {\em Foundations and Trends in Machine Learning\/}~{\em 11\/}(5-6),
  355--607.

\bibitem[\protect\citeauthoryear{Post and Pot\`{\i}}{Post and
  Pot\`{\i}}{2017}]{Post2017}
Post, T. and V.~Pot\`{\i} (2017).
\newblock Portfolio analysis using stochastic dominance, relative entropy, and
  empirical likelihood.
\newblock {\em Management Science\/}~{\em 63\/}(1), 153--165.

\bibitem[\protect\citeauthoryear{Rudin}{Rudin}{1991}]{Rudin-Book}
Rudin, W. (1991).
\newblock {\em Functional Analysis\/} (2 ed.).
\newblock International Series in Pure and Applied Mathematics. McGraw-Hill.

\bibitem[\protect\citeauthoryear{Sanov}{Sanov}{1957}]{Sanov}
Sanov, I.~N. (1957).
\newblock On {T}he {P}robability of {L}arge {D}eviaitons of {R}andom
  {V}ariables.
\newblock {\em Matematicheskii Sbornik\/}~{\em 42}, 11--44.

\bibitem[\protect\citeauthoryear{Shapiro, Dentcheva, and Ruszczynski}{Shapiro
  et~al.}{2009}]{book-SAA}
Shapiro, A., D.~Dentcheva, and A.~Ruszczynski (2009, 01).
\newblock {\em Lectures on stochastic programming. Modeling and theory}.
\newblock MPS-SIAM Series on Optimization. SIAM-MPS, Philadelphia, PA.

\bibitem[\protect\citeauthoryear{Sheehy}{Sheehy}{1988}]{Sheehy-EL}
Sheehy, A. (1988).
\newblock Kulback-{L}eibler {C}onstrained {E}stimation of {P}robability
  {M}easures.
\newblock Technical Report 132, University of Washington, Department of
  Statistics.

\bibitem[\protect\citeauthoryear{Soofi and Retzer}{Soofi and
  Retzer}{2002}]{Soo2002}
Soofi, E. and J.~Retzer (2002).
\newblock Information indices: unification and applications.
\newblock {\em Journal of Econometrics\/}~{\em 107\/}(1), 17--40.

\bibitem[\protect\citeauthoryear{Steiner and MacKay}{Steiner and
  MacKay}{2001}]{Steiner-MacKay2001}
Steiner, S.~H. and R.~J. MacKay (2001).
\newblock Monitoring processes with data censored owing to competing risks by
  using exponentially weighted moving average control charts.
\newblock {\em Journal of the Royal Statistical Society. Series C (Applied
  Statistics)\/}~{\em 50\/}(3), 293--302.

\bibitem[\protect\citeauthoryear{van~der Vaart and Wellner}{van~der Vaart and
  Wellner}{1996}]{VDV-W}
van~der Vaart, A.~W. and J.~Wellner (1996).
\newblock {\em Weak {C}onvergence and {E}mpirical {P}rocesses\/} (First ed.).
\newblock Springer Series in Statistics. Springer.

\bibitem[\protect\citeauthoryear{{Verteramo Chiu}, Tauer, Gröhn, and
  Smith}{{Verteramo Chiu} et~al.}{2020}]{VERTERAMOCHIU2020104906}
{Verteramo Chiu}, L., L.~Tauer, Y.~Gröhn, and R.~Smith (2020).
\newblock Ranking disease control strategies with stochastic outcomes.
\newblock {\em Preventive Veterinary Medicine\/}~{\em 176}, 104906.

\bibitem[\protect\citeauthoryear{Wayne~Patty}{Wayne~Patty}{1993}]{Patty}
Wayne~Patty, C. (1993).
\newblock {\em Foundations of Topology}.
\newblock Wavelan Press Inc.

\end{thebibliography}
\end{document}